\newtheorem{theorem}{Theorem}[section]
\newtheorem{lemma}[theorem]{Lemma}
\newtheorem{proposition}[theorem]{Proposition}
\newtheorem{corollary}[theorem]{Corollary}
\newtheorem{conjecture}[theorem]{Conjecture}
\newtheorem{question}[theorem]{Question}
\theoremstyle{definition}
\newtheorem{definition}[theorem]{Definition}
\newtheorem{algorithm}[theorem]{Algorithm}
\theoremstyle{remark}
\newtheorem{remark}[theorem]{Remark}
\numberwithin{equation}{section}
\newcommand{\op}{\star}
\newcommand{\XP}{\textsf{XP}}
\newcommand{\downset}[1]{\langle #1 \rangle_\downarrow}
\begin{document}

\title[Gorenstein braid cones]{Gorenstein braid cones and crepant resolutions}


\author{Joshua Hallam}
\address{Department of Mathematics, Loyola Marymount University, Los Angeles, CA 90045 USA}
\email{joshua.hallam@lmu.edu}

\author{John Machacek}
\address{Department of Mathematics, University of Oregon, Eugene, OR 97403 USA}
\email{johnmach@uoregon.edu}
\thanks{The second author acknowledges support of NSF grant DMS-2039316.}

\subjclass[2020]{Primary 06A07; Secondary 05E14, 14M25, 52B05}

\keywords{toric variety, Gorenstein variety, braid cone, poset, M\"obius function}


\begin{abstract}
To any poset $P$, we associate a convex cone called a braid cone. We also associate a fan and study the toric varieties the cone and fan define.
The fan always defines a smooth toric variety $X_P$, while the toric variety $U_P$ of the cone may be singular.
We show that $X_{P} \dashrightarrow U_{P}$ is a crepant resolution of singularities if and only if $P$ is bounded.
Next, we aim to determine when $U_P$ is Gorenstein or $\mathbb{Q}$-Gorenstein.
We prove that whether or not $U_P$ is ($\mathbb{Q}$)-Gorenstein depends only on the biconnected components of the Hasse diagram of $P$.
In the case that $P$ has a minimum or maximum element, we show that the Gorenstein property of $U_P$ is completely determined by the M\"obius function of $P$. We also provide a recursive method  that determines if $U_P$ is ($\mathbb{Q}$)-Gorenstein  in this case.  
We conjecture that $U_P$ is Gorenstein if and only if it is $\mathbb{Q}$-Gorenstein.
We verify this conjecture for posets of length $1$ and also for posets with a minimum or maximum element.
\end{abstract}

\maketitle
\tableofcontents

\section{Introduction}
The correspondence between convex cones and toric varieties is well established.
We consider posets as an additional combinatorial layer.
Every poset naturally gives rise to a  cone   as well as a fan that refines the cone.   The cones that arise here are the unions of certain chambers of the braid arrangement and thus called \emph{braid cones}. We use the combinatorics of the underlying posets and their Hasse diagrams to study affine toric varieties associated to braid cones.  We also study a resolution of singularities for such a toric variety coming from another toric variety associated to a fan.

Our main focus is to be able to efficiently decide when the variety associated to a braid cone is Gorenstein and when the aforementioned resolution is crepant.
Since we are working with affine toric varieties, this algebro-geometric decision problem ends up being equivalent to the discrete geometry problem of deciding if there exists a hyperplane containing certain lattice points. 
We find and use properties of posets which are equivalent to the desired resolutions of singularities being crepant and the Gorenstein property.

Crepant resolutions and the Gorenstein property are important in studying singularities in toric geometry (see e.g.~\cite{Cox}). Crepant resolutions have no discrepancy in the canonical class and play a role in the minimal model program.  The class of Gorenstein varieties consists of varieties which are at worse ``not too singular''. It properly sits between the class of smooth varieties and   Cohen-Macaulay varieties. That is,
$$
\mbox{smooth varieties} \subsetneq \mbox{Gorenstein varieties} \subsetneq \mbox{Cohen-Macaulay varieties}.
$$
All  toric varieties coming from fans  are normal and hence by~\cite{Hochster} are Cohen-Macaulay. Moreover, in~\cite{PRW}, a classification of smoothness for toric varieties arising from braid cones  is given.

Recent works in a similar spirit to what we study here include~\cite{graphs}, \cite{multi} and \cite{matroids}. In~\cite{graphs} the authors give graph-theoretic characterizations of Gorensteinness for the toric variety  associated to  the base polytope of a graphic matroid arising from (simple) graphs. The work in~\cite{multi} does the same in the case of graphic matroids coming from multigraphs. In~\cite{matroids}, a matroid-theoretic characterization for Gorensteinness of the varieties  associated to the base polytopes of general matroids is given.

Cones corresponding to posets have been considered in a generalized setting of root systems for Coxeter groups under the names \emph{parsets}~\cite{Reiner} and \emph{Coxeter cones}~\cite{Stem}.
The case of posets in this setting is the Type $A$ case.
Recent work on cones coming from posets include~\cite{Whitney}  and~\cite{Mac}. The former focuses on their Whitney numbers  and the latter studies  star subdivisions which correspond to toric blow-ups.

In the remainder of this section, we review posets and braid cones.
Afterwards, we discuss the necessary background on toric varieties.
In Section~\ref{sec:label} we introduce labelings of posets which will determine when a poset defines a ($\mathbb{Q}$)-Gorenstein toric variety as well as when a particular fan associated to a poset gives a crepant resolution of singularities. 
Our main result on crepant resolutions is in Section~\ref{sec:crepant}. In Theorem~\ref{thm:crepant}, we show there is a certain crepant resolution of singularities respecting Weyl chambers precisely for toric varieties coming from bounded posets (i.e.~posets with both a minimum and maximum).
In Section~\ref{sec:Gor} we deal with the Gorenstein property and find, in Theorem~\ref{thm:bicon}, that Gorensteinness depends only on the biconnected components of the Hasse diagram.
We consider posets with a minimum or a maximum element in Section~\ref{sec:posetsWith0or1}. There, we are able to characterize when such posets give rise to a Gorenstein variety (Theorem~\ref{thm:characterizationOfRankedPosetGor}) in a way which gives a recursive algorithm (Algorithm~\ref{alg:RankedPosAlg}) to do this. 
In terms of parameterized complexity, this algorithm is an \XP{}-algorithm (i.e.~``slicewise polynomial'') implying it runs efficiently for small values of the parameter.  The parameter used is a new parameter on posets which depends only on the minimal elements as well as those elements which cover only minimal elements.
The M\"obius function plays a significant role in developing the ideas in this section.
We then look at posets of length 1 in Section~\ref{sec:len1} and conclude with a discussion on some open problems in Section~\ref{sec:conclusion}.

\subsection{Posets, connectedness, and braid cones}
A much fuller treatment of braid cones can be found in~\cite{PRW}.
Here we only discuss what is needed for our purposes.
We write $[n] := \{1,2,\dots,n\}$ and have the lattice $\mathbb{Z}^n \subseteq \mathbb{R}^n$ with standard basis denoted $\{e_i \mid i \in [n]\}$ and coordinates $(x_1, \dots, x_n)$.
For each $A \subseteq [n]$, we define $e_A := \sum_{i \in A} e_i$.
Our focus will be on the lattice $N = \mathbb{Z}^n / \mathbb{Z} e_{[n]}$ with corresponding vector space $N_{\mathbb{R}} = \mathbb{R} \otimes_{\mathbb{Z}} N$.
The dual lattice and vector space will be denoted by $M$ and $M_{\mathbb{R}}$ respectively.

We will assume that the reader is familiar with basic properties of posets.  For  more information on posets or any undefined terms, the reader may consult~\cite[Chapter 5]{SaganCAOC} and~\cite[Chapter 3] {StanleyEC1}. We will often need to refer to graph-theoretic aspects of the Hasse diagram of a poset. Unless otherwise noted, when we apply graph-theoretic adjectives to a poset or its Hasse diagram, we are viewing the Hasse diagram as an undirected graph.  For example, when we say a poset $P$ is \emph{connected} or \emph{biconnected}, we mean that the underlying undirected Hasse diagram of $P$ is connected or biconnected.

The \emph{length} of a chain in a poset is one less the number of elements in the chain. For example, the  chain $x_0 < x_1 < \cdots < x_{\ell}$ has length $\ell$.
The \emph{length} of a poset $P$ is the maximum among all lengths of chains in $P$.
We will use $\ell(P)$ to denote the length of a poset.
Note that in the case that $P$ is ranked, the length is the rank of the poset.

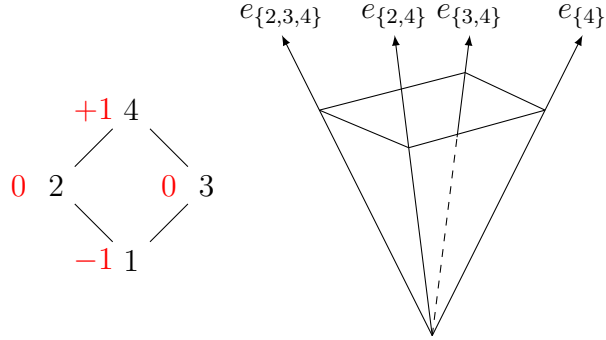
\begin{figure}
\centering
    \begin{tikzpicture}

\node (4) at (-4,3) {$4$};
\node (2) at (-5,2) {$2$};
\node (3) at (-3,2) {$3$};
\node (1) at (-4,1) {$1$};

\node at (-4-.5,3) {\textcolor{red}{$+1$}};
\node at  (-5-.5,2){\textcolor{red}{$0$}};
\node at  (-3-.5,2){\textcolor{red}{$0$}};
\node at  (-4-.5,1) {\textcolor{red}{$-1$}};


\draw (4)--(2)--(1)--(3)--(4);

    \draw[-{latex}] (0,0) -- (-2,4);
    \node at (-2,4.25) {$e_{\{2,3,4\}}$};
    \draw[dashed] (0,0) -- (0.33, 8*0.33);
    \draw[-{latex}] (0.33, 8*0.33) -- (0.5,4);
    \node at (0.5,4.25) {$e_{\{3,4\}}$};
    \draw[-{latex}] (0,0) -- (-0.5,4);
    \node at (-0.5,4.25) {$e_{\{2,4\}}$};
    \draw[-{latex}] (0,0) -- (2,4);
    \node at (2,4.25) {$e_{\{4\}}$};
    \draw (-1.5,3) -- (0.4375,3.5);
    \draw (-1.5,3) -- (-0.3125,2.5);
    \draw (-0.3125,2.5) -- (1.5,3);
    \draw (0.4375,3.5) -- (1.5,3);

\end{tikzpicture}
\caption{The Hasse diagram of a poset $P$ along with the cone $\sigma_P$. The values in red give  a chamber crepant labeling of $P$.}
\label{fig:ex_cone}
\end{figure}

Let $P$ be a connected poset with with underlying set $[n]$.  The \emph{braid cone}  associated to $P$, denoted by $\sigma_P$, is given by the  intersection of the half-spaces $x_i \leq x_j$ for each pair $(i,j)$ with $i <_P j$.
Note that, by~\cite[Proposition 3.5 (8)]{PRW},  it suffices to only use the half-spaces  $x_i \leq x_j$  when $i\lessdot_P j$. In Figure~\ref{fig:ex_cone} we have an example of a poset $P$ and the cone $\sigma_P$.
In this example, the cone is the intersection of the four half-spaces $x_1 \leq x_2$, $x_1 \leq x_3$, $x_2 \leq x_4$, and $x_3 \leq x_4$.
One can check  that $\sigma_P$ is also given by the positive span of the vectors $e_{\{4\}}$, $e_{\{2,4\}}$, $e_{\{3,4\}}$, and $e_{\{2,3,4\}}$.
We will give a general way to find these generating vectors in Lemma~\ref{lem:latticept}.

In~\cite[Proposition 3.5]{PRW} it is shown that for any poset $P$, the cone $\sigma_P$ is full dimensional and rational. The connectedness of $P$ guarantees that $\sigma_P$ is strongly convex (which the authors call \emph{pointed} in~\cite{PRW}).
We will assume throughout the article that all posets are connected and also assume that all cones encountered are  strongly convex, rational polyhedral cones.  We will also assume that $|P|\geq 2$ to avoid issues with triviality.
It is often convenient to assume our posets have $[n]$ as their underlying set.
However, if $P$ is any poset with an underlying set of cardinality $n$, we may pick some bijection to $[n]$ and use this bijection to define $\sigma_P$.
None of the properties of $\sigma_P$ that we are interested in are influenced in any way by this choice of bijection.
This will allow us  to consider multiple posets with disjoint underlying sets.

The \emph{braid arrangement} is the hyperplane arrangement in $N_{\mathbb{R}}$ consisting of the hyperplanes $x_i = x_j$ for $1 \leq i < j \leq n$. This  is the Coxeter arrangement of type $A_{n-1}$.
The regions of the braid arrangement are the Weyl chambers of type $A_{n-1}$ and are $\sigma_L$ for each linear order $L$ on the elements of $[n]$.
We use $|\sigma|$ to denote the \emph{support} of a cone $\sigma$ which consists of all points of $N_{\mathbb{R}}$ that are contained in $\sigma$.
In general $|\sigma_P|$ is  the union of $|\sigma_L|$ over all linear extensions $L$ of $P$.
For any poset $P$, we write $\Sigma_P$ for the fan consisting of the cones $\sigma_L$ and its faces for each linear extension $L$ of $P$.

Returning to our example of $P$ in Figure~\ref{fig:ex_cone}, we can obtain $\Sigma_P$ by adding a wall on the hyperplane $x_2 = x_3$ which is  spanned by $e_{\{2,3,4\}}$ and $e_{\{4\}}$. The fan will then have two maximal cones, each corresponding to one of the two linear extensions of $P$.

\subsection{The corresponding toric varieties}
For an in-depth introduction to toric varieties and the Gorenstein property, the reader can see the text~\cite{CLS}, particularly Chapter 8 of the book.  Given a cone $\sigma$, which is minimally generated by the positive span of finitely many vectors $\{v_i\}_i$, the set of \emph{ray generators} of $\sigma$ is $\{w_i\}_i$ where $w_i$ is the first nonzero lattice point on the ray $\mathbb{R}_{\geq 0} v_i$. Note that the ray generators of a cone are unique.  If $\sigma$ is strongly convex  with ray generators from some lattice, we get an affine toric variety we denote by $U_{\sigma}$. Similarly, given any fan of such cones $\Sigma$, it defines an abstract toric variety denoted $X_{\Sigma}$. We will write $U_P$ for $U_{\sigma_P}$ and $X_P$ for $X_{\Sigma_P}$.

A variety is called \emph{$\mathbb{Q}$-Gorenstein} if some multiple of its canonical divisor is Cartier.
A variety is \emph{Gorenstein} if it is Cohen-Macaulay and its canonical divisor is Cartier. When the variety is toric and defined by a cone, there is an equivalent definition of ($\mathbb{Q}$)-Gorenstein as we explain next.  Let $v_1, v_2, \dots, v_k \in N$ be the ray generators of $\sigma$, then $U_{\sigma}$ is $\mathbb{Q}$-Gorenstein if and only if the exists $u \in M$ such that $\langle u, v_i \rangle = r$ for all $1 \leq i \leq k$ and some positive integer $r$.
The \emph{index} of a $\mathbb{Q}$-Gorenstein toric variety is the minimal possible $r$ which can be taken.
If the index is $r=1$, then the toric variety is Gorenstein since toric varieties arising from fans are always Cohen-Macaulay~\cite{Hochster}.

A cone is \emph{smooth} if its ray generators can be extended to a basis for the lattice, and a fan is smooth if it consists of smooth cones.
If $\Sigma$ is a smooth fan refining a cone $\sigma$ then $X_{\Sigma} \dashrightarrow U_{\sigma}$ gives a resolution of singularities. 
Take a $\mathbb{Q}$-Gorenstein toric variety $U_{\sigma}$ of index $r$ and $u \in M$ such that $\langle u, v_i \rangle = r$ for all ray generators $v_1, v_2, \dots, v_k$ of $\sigma$.
If $\Sigma$ is a smooth fan refining $\sigma$ such that any ray generator $v$ of $\Sigma$ also has $\langle u, v \rangle = r$, then this resolution is called \emph{crepant}.
A crepant resolution does not change the canonical class.

As we saw earlier, the cone   in Figure~\ref{fig:ex_cone} is generated by  $e_{\{4\}}$, $e_{\{2,4\}}$, $e_{\{3,4\}}$, and $e_{\{2,3,4\}}$. In fact, these are the ray generators of the cone.  If we take $u = e^*_{\{4\}}-e^*_{\{1\}} $, then 
$$
\langle u, e_{\{4\}} \rangle = \langle u, e_{\{2,4\}} \rangle =  \langle u, e_{\{3,4\}} \rangle = \langle u,e_{\{2,3,4\}} \rangle =1.
$$
Moreover,
$$
\langle u, e_{\{1,2,3,4\}} \rangle =0
$$
which must hold since $e_{\{1,2,3,4\}} =0$ in $N_{\mathbb{R}}$. We conclude that $U_P$ is Gorenstein. Because all the ray generators of the fan $\Sigma_P$ are also ray generators of $U_P$,  $X_{P} \dashrightarrow U_{P}$  is crepant.

At this point the reader may be wondering how one can find a $u\in M$ to determine if $U_P$ is Gorenstein or if $X_{P} \dashrightarrow U_{P}$  is crepant.  In the next section we introduce poset labelings to do this exactly.  This allows us to translate our  algebraic problem   to a purely combinatorial one

\section{Ray generators, lattice points, and labelings}
\label{sec:label}
In this section we describe the ray generators of a braid cone as well as the lattice points of the form $e_A$ for $A \subseteq [n]$ which are contained in the cone.
We use this description to define labelings of the poset $P$ which capture when $U_P$ is ($\mathbb{Q}$-)Gorenstein and when $X_P \dashrightarrow U_P$ is crepant.

For a binary relation $R$, let $R^{op}$ denote the opposite binary relation where $(i,j) \in R^{op}$ if and only if $(j,i) \in R$.
We define a \emph{contraction} of a poset $P$ to be the transitive closure of $P \cup R^{op}$ for some $R \subseteq P$.
For example, taking the poset $P = \{2 <_P 1, 2 <_P 3\}$ and $R = \{2 <_R 1\}$ we find the transitive closure of $P \cup R^{op}$ to be  $\{1 < 2, 2 < 1, 1 < 3, 2 < 3\}$ which is now not a poset, but is a preposet.
A key fact is that if a poset $P$ corresponds to a cone $\sigma$, then a preposet $P'$ corresponds to a face $\tau \subseteq \sigma$ if and only if $P'$ is a contraction of $P$~\cite[Proposition 3.5 (2)]{PRW}.

\begin{remark}
A related notion to the opposite binary relation, is the \emph{order dual} of a poset $P$. This is given by reversing all the inequalities defining $P$.
To avoid confusion, we will use the notation $P^*$ when we are dealing taking the order dual of an entire partial order $P$ whereas $R^{op}$ can be used for any binary relation. 
\end{remark}

Given an upset $A$ of $P$, we define
\[R_A := \{(i,j) \in P \mid i,j \in A \text{ or } i,j \not\in A\}\]
and let the contraction $P_A$ be the  transitive closure of $P \cup R_A^{op}$.  The fact that $P_A$ is a preposet means that $P_A$ is reflexive and transitive.
In particular, $P_A$ need not be antisymmetric.
So, $P_A$ defines an equivalence relation where $i$ is equivalent to $j$ if $(i,j) \in P_A$ and $(j,i) \in P_A$.
We also define the \emph{dimension} of $A$, denoted $\dim(A)$, to be one less than the number of equivalence classes determined by the preposet $P_A$.
Note the $\dim(A)$ depends on the poset $P$, but $A$ is always taken to be an upset of a poset $P$ so the data on the poset is present.
The dimension, $\dim(A)$, can be computed by
\[\dim(A) = cc(A) + cc(\overline{A}) - 1\]
where $cc(A)$ and $cc(\overline{A})$ are the number of connected components of the Hasse diagram of $P$ restricted to $A$ and $\overline{A}$ respectively.
Here $\overline{A}$ denotes the complement of $A$ in $[n]$. For the poset depicted in Figure~\ref{fig:basicGorLabExample}, we have that $\dim(\{1,3\})=1$ since $\{1,3\}$ and $\overline{\{1,3\}} =\{2\}$ are both connected, whereas $\dim(\{3\})=2$ since  $\{3\}$  is connected and removing it disconnects the graph into two parts.

Since we are interested in the Gorenstein property and crepant resolutions, we need to know what the ray generators of a cone $\sigma_P$ are as well as which ray generators are added when refining $\sigma_P$ to $\Sigma_P$.
The next lemma will describe primitive lattice points on these rays.
Let us briefly recall a few definitions from convex geometry.
A \emph{face} of a cone is a subset of the cone obtained by intersection with a supporting hyperplane.
The \emph{dimension} of a face is the dimension of its linear span.
Lastly, a point is said to be in the \emph{relative interior} of a face $\tau$ if it is not contained in any proper face $\tau' \subsetneq \tau$.

\begin{lemma}\label{lem:rayGen}
If $P$ is a poset, then the lattice point $e_A$ is contained in $\sigma_P$ if and only if $A$ is an upset.
Moreover, for an upset $A$ the lattice point $e_A$ is contained in the relative interior of a face $\tau \subseteq \sigma_P$ with $\dim(\tau) = \dim(A)$.  In particular, $e_A$ is a ray generator of $\sigma_P$ if and only if $A$ is an upset of dimension 1.
\label{lem:latticept}
\end{lemma}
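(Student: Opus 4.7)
The plan is to prove the three assertions in the order they are stated, leveraging the description of faces of $\sigma_P$ via contractions given in \cite[Proposition 3.5 (2)]{PRW}. For the first equivalence, I would directly test the defining inequalities on $e_A$. Since $(e_A)_i \in \{0,1\}$ with value $1$ exactly when $i \in A$, the inequality $x_i \leq x_j$ for $i \lessdot_P j$ fails on $e_A$ iff $i \in A$ and $j \notin A$. So $e_A \in \sigma_P$ iff no cover relation goes from $A$ to its complement, which is exactly the statement that $A$ is an upset. (The paper notes that checking covers suffices by \cite[Proposition 3.5(8)]{PRW}.)

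Next, assume $A$ is an upset and identify the unique face $\tau$ of $\sigma_P$ containing $e_A$ in its relative interior. This face is cut out by exactly those defining inequalities that become equalities at $e_A$. The equality $(e_A)_i = (e_A)_j$ holds precisely when $i$ and $j$ lie on the same side of the partition $\{A, \overline{A}\}$, i.e.\ precisely for the relations $(i,j) \in R_A$. Hence the tight inequalities cut out the face corresponding to the contraction $P_A$ by the cited proposition from \cite{PRW}.

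Now I would compute $\dim(\tau)$. The linear span of $\tau$ is defined by $x_i = x_j$ for all $(i,j)$ in the equivalence closure of $R_A$ inside $P_A$. I would argue that the equivalence classes of $P_A$ are exactly the connected components of the Hasse diagram restricted to $A$, together with those restricted to $\overline{A}$. Two ingredients are needed: first, because $A$ is an upset, no $P$-relation points from $A$ to $\overline{A}$, so neither $P$ nor $R_A^{op}$ can link the two sides, and equivalence classes must stay within $A$ or within $\overline{A}$; second, on each side every $P$-relation both is in $R_A$ and has its reverse in $R_A^{op}$, so $P_A$ restricted to $A$ (resp.\ $\overline{A}$) is the equivalence relation generated by comparability, whose classes are precisely the connected components of the comparability graph, which coincide with those of the Hasse diagram. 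Thus the linear span of $\tau$ in $\mathbb{R}^n$ has dimension $cc(A) + cc(\overline{A})$, and passing to the quotient $N_{\mathbb{R}} = \mathbb{R}^n/\mathbb{R} e_{[n]}$ gives $\dim(\tau) = cc(A) + cc(\overline{A}) - 1 = \dim(A)$.

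For the final assertion, $e_A$ is a ray generator of $\sigma_P$ iff it lies in the relative interior of a one-dimensional face and is primitive in $N$. The first condition is $\dim(A) = 1$ by what we just proved. For primitivity, I would lift a hypothetical equality $e_A = k v$ in $N$ to $e_A + m e_{[n]} = k w$ in $\mathbb{Z}^n$; the entries on the left are $m$ or $m+1$, forcing $k \mid 1$ provided $A$ is a proper nonempty subset of $[n]$ (which follows from $A$ being an upset defining a genuine ray, given $|P| \geq 2$). I expect the main technical obstacle to be the dimension computation in the previous paragraph — specifically, verifying carefully that the equivalence classes of the preposet $P_A$ really do match connected components of $A$ and $\overline{A}$, since this uses the upset hypothesis in a nontrivial way.
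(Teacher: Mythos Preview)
Your proposal is correct and follows essentially the same strategy as the paper: identify the face $\tau$ corresponding to the contraction $P_A$ via \cite[Proposition~3.5(2)]{PRW}, then argue that $e_A$ lies in its relative interior. The differences are only in execution. You pin down $\tau$ directly as the face cut out by the inequalities tight at $e_A$, whereas the paper first places $e_A$ in $\tau$ and then checks that no further contraction of $P_A$ can contain $e_A$ (since any unreversed relation $(i,j)\in P_A$ has $j\in A$, $i\notin A$ by the upset hypothesis, so the extra constraint $x_j\le x_i$ fails at $e_A$). Your dimension computation---identifying the equivalence classes of $P_A$ with the connected components of $A$ and $\overline{A}$---is more explicit than the paper, which simply uses that $\dim(A)$ was \emph{defined} as one less than the number of $P_A$-equivalence classes and hence equals $\dim(\tau)$ automatically. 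Finally, the paper dispatches the ray-generator claim by noting that the primitive rays of the braid fan are exactly the nonzero $0$--$1$ vectors, rather than giving your direct primitivity argument; both are fine.
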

\begin{proof}
It follows immediately from the definitions that $e_A \in \sigma_P$ if and only if $A$ is an upset.
Now assume that $A$ is an upset so that $e_A \in \sigma_P$.
Then there is a face $\tau \subseteq \sigma_P$ with $\dim(\tau) = \dim(A)$ indexed by the contraction $P_A$.
Furthermore, we have $e_A \in \tau$ by construction. 
It only remains to show that $e_A$ is in the relative interior of $\tau$.
That is, we must show $e_A$ is not in any face properly contained in $\tau$.
Such faces properly contained in $\tau$ will correspond to contractions of $P_A$.
Take any $(i,j) \in P_A$.
If $(j,i) \in P_A$ this relation will not matter in any further contractions.
If $(j,i) \not\in P_A$, then since  $A$ is an upset we have that $j \in A$ and $i \not \in A$.
For any contraction which is the transitive closure of $P_A \cup R^{op}$ with $(i,j) \in R$, we must have $x_j \leq x_i$ in the corresponding cone.
However, since $j \in A$ and $i \not\in A$ we see that $e_A$ does not satisfy this inequality.
Therefore $e_A$ is in the relative interior of $\tau$ as desired. 

The final assertion now follows since the  ray generators that show up in the braid arrangement fan are precisely all $0$-$1$ vectors except the all $0$ vector $e_{\varnothing}$ and all the $1$ vector $e_{[n]}$ (which is equal to zero is our setting).
\end{proof}

\begin{figure}\label{fig:basicGorLabExample}
    \begin{tikzpicture}


    

\node (3) at (-4,3) {$3$};
\node (1) at (-5,2) {$1$};
\node (2) at (-3,2) {$2$};

\node at (-4,1) {$Q$};

\node at (-4-.5,3) {\textcolor{red}{$+2$}};
\node at  (-5-.5,2){\textcolor{red}{$-1$}};
\node at  (-3-.5,2){\textcolor{red}{$-1$}};

\draw (1)--(3);
\draw (2)--(3);
    \end{tikzpicture}
    
    \caption{A poset together and its Gorenstein labeling given in red.}
\end{figure}
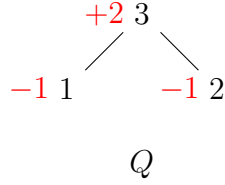

Let us now determine if $U_Q$ is Gorenstein for $Q$ in Figure~\ref{fig:basicGorLabExample}.  Recall that $U_Q$ is Gorenstein if and only if there is a $u\in M$ such that $\langle u, e_A\rangle =1$ for all ray generators $e_{A}$.    Let $u = y_1e^*_{\{1\}} +y_2e^*_{\{2\}}+y_3e^*_{\{3\}}$.  Using Lemma~\ref{lem:rayGen}, one can verify that the ray generators of $\sigma_Q$ are $e_{\{1,3\}}$ and $e_{\{2,3\}}$.  This together with the fact that $e_{\{1,2,3\}}=0$ in $N_{\mathbb{R}}$  implies that if $U_Q$ is Gorenstein, then the following system has a solution. 
\begin{align*}
    y_1+ y_3&= 1\\
    y_2+ y_3 &=1\\
    y_1+y_2+y_3&=0
\end{align*}
The system does have a solution namely $y_1=-1, y_2=-1, y_3=2$, and so $U_Q$ is Gorenstein.  Since $1,2$, and $3$ are elements of $Q$, we can ``label" the element $i$ of $Q$ with the value of $y_i$ to get a labeling that completely determines if $U_Q$ is Gorenstein.    Motivated by the ideas presented in this example, we introduce the following poset labelings.

\begin{definition}\label{def:gorLab}
Given a poset $P$ on $[n]$, an \emph{$r$-Gorenstein labeling} is a function $\phi:P \to \mathbb{Z}$ such that
\[\sum_{i \in P} \phi(i) = 0\]
and
\[\sum_{i \in A} \phi(i) = r\]
for all upsets $A$ with $\dim(A) = 1$.
We call a $1$-Gorenstein labeling a \emph{Gorenstein labeling}.
\end{definition}

\begin{remark}
A poset admitting an $r$-Gorenstein labeling is unrelated to a poset being Gorenstein*.
As the next proposition shows, an $r$-Gorenstein labeling of a poset $P$ coincides with the toric variety $U_P$ being ($\mathbb{Q}$)-Gorenstein.
Whereas the Gorenstein* property is related to Gorensteinness of the order complex of $P$~\cite{StanGor}.
\end{remark}

One can verify that the labelings in Figure~\ref{fig:ex_cone} and Figure~\ref{fig:basicGorLabExample} are   Gorenstein labelings.   As we saw earlier, the  varieties associated to these posets are both Gorenstein.  This is no coincidence as we see next.

\begin{proposition}\label{prop:GorIffGorLab}
Let $P$ be a poset.
$U_{P}$ is $\mathbb{Q}$-Gorenstein with index $r$ if and only if  $P$ has an $r$-Gorenstein labeling but has no $s$-Gorenstein labeling for any $0 < s < r$.
\label{prop:Gorenstein}
\end{proposition}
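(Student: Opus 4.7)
The plan is to translate the algebraic $\mathbb{Q}$-Gorenstein condition into the combinatorial Gorenstein labeling condition by making the dual lattice $M$ explicit and then invoking Lemma~\ref{lem:rayGen} to identify the ray generators. Everything will reduce to a direct dictionary between elements $u \in M$ and functions $\phi : P \to \mathbb{Z}$.

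First I would describe $M$ explicitly. Since $N = \mathbb{Z}^n / \mathbb{Z} e_{[n]}$, the dual lattice $M = \operatorname{Hom}(N, \mathbb{Z})$ consists precisely of the linear functionals $u = \sum_{i=1}^n y_i e_i^* \in (\mathbb{Z}^n)^*$ that vanish on $e_{[n]}$, i.e., those with $\sum_{i=1}^n y_i = 0$. For any subset $A \subseteq [n]$, such a $u$ satisfies $\langle u, e_A \rangle = \sum_{i \in A} y_i$. I would then record that, by Lemma~\ref{lem:rayGen}, the ray generators of $\sigma_P$ are exactly the lattice points $e_A$ for which $A$ is an upset of $P$ with $\dim(A) = 1$.

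Next I would put these two facts together. By the cone-theoretic characterization of $\mathbb{Q}$-Gorensteinness recalled in the introduction, $U_P$ is $\mathbb{Q}$-Gorenstein with index $r$ if and only if $r$ is the smallest positive integer for which there exists some $u \in M$ with $\langle u, v \rangle = r$ on every ray generator $v$ of $\sigma_P$. Using the two observations above, writing $u = \sum y_i e_i^*$ and setting $\phi(i) := y_i$, this condition becomes: $\phi : P \to \mathbb{Z}$ satisfies $\sum_{i \in P} \phi(i) = 0$ (from $u \in M$) and $\sum_{i \in A} \phi(i) = r$ for every upset $A$ with $\dim(A) = 1$ (from the ray generator condition). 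This is exactly the statement that $\phi$ is an $r$-Gorenstein labeling in the sense of Definition~\ref{def:gorLab}.

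Finally, the minimality of $r$ in both definitions matches up tautologically: the index is the minimal positive integer for which such a $u$ (equivalently, such a $\phi$) exists, which is precisely the requirement that there be no $s$-Gorenstein labeling for $0 < s < r$. There is no real obstacle here beyond being careful that the $\sum_i \phi(i) = 0$ condition in Definition~\ref{def:gorLab} corresponds exactly to the integrality constraint $u \in M$ coming from the quotient $N = \mathbb{Z}^n / \mathbb{Z} e_{[n]}$; once that identification is made, the equivalence is a direct unwinding of definitions.
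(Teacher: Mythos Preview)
Your argument is correct and follows essentially the same approach as the paper: identify the ray generators via Lemma~\ref{lem:rayGen}, then use the dictionary $\phi(i) = \langle u, e_i \rangle$ to translate between $u \in M$ and labelings $\phi$, with the $\sum_i \phi(i) = 0$ condition encoding membership in $M$. Your version is slightly more explicit about the description of $M$ and the handling of the minimality of $r$, but the substance is identical.
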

\begin{proof}
Given a poset $P$, let 
\[\{A_1, A_2, \dots, A_k\} = \{A \mid A \text{ is an upset and } \dim(A) = 1\}.\]
So by Lemma~\ref{lem:latticept}, the ray generators of $\sigma_P$ are $e_{A_1}, e_{A_2}, \dots, e_{A_k}$.
We observe that $u \in M$ with $\langle u, e_{A_j} \rangle = r$ for $1 \leq j \leq k$ is equivalent to an $r$-Gorenstein labeling $\phi$ where $\phi(i) = \langle u, e_i \rangle$ for $1 \leq i \leq n$.
\end{proof}

\begin{definition}
Let $P$ be a poset for which $U_{P}$ is $\mathbb{Q}$-Gorenstein with index $r$.
A \emph{chamber crepant labeling} of $P$ is a function $\phi:P \to \mathbb{Z}$ such that
\[\sum_{i \in P} \phi(i) = 0\]
and
\[\sum_{i \in A} \phi(i) = r\]
for all upsets $A$ with $\dim(A) > 0$.
\end{definition}

One can check that the labeling of the poset in Figure~\ref{fig:ex_cone} is a chamber crepant labeling.  Indeed, every upset except the full poset only contains a single element with a nonzero label and the total sum of values is 0.  On the other hand, the labeling of the poset in Figure~\ref{fig:basicGorLabExample} is not a chamber crepant labeling. For example, the sum of the values in the upset $\{1,3\}$ is 1 and the sum of values for $\{3\}$ is 2.  As we will see in Section~\ref{sec:crepant}, there are no chamber crepant labelings of the poset in Figure~\ref{fig:basicGorLabExample} because it is not bounded.

\begin{proposition}\label{prop:CrepIffGorLab}
Let $P$ be a poset for which $U_{P}$ is $\mathbb{Q}$-Gorenstein.
The poset $P$ has a chamber crepant labeling if and only if $X_{P} \dashrightarrow U_{P}$ is crepant.
\label{prop:crepant}
\end{proposition}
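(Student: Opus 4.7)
The plan is to mirror the proof of Proposition~\ref{prop:Gorenstein} by translating the definition of ``crepant'' (as given in the introduction) directly into the language of poset labelings. By hypothesis $U_P$ is $\mathbb{Q}$-Gorenstein, say with index $r$, so the resolution $X_P \dashrightarrow U_P$ is crepant precisely when there exists some $u \in M$ with $\langle u, v\rangle = r$ for every ray generator $v$ of the fan $\Sigma_P$. The entire proposition therefore reduces to identifying the rays of $\Sigma_P$ and then reading off the labeling condition.

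First I would pin down the rays of $\Sigma_P$. By construction, the maximal cones of $\Sigma_P$ are the Weyl chambers $\sigma_L$ as $L$ ranges over the linear extensions of $P$. For a linear order $L$, the upsets of $L$ form a chain under inclusion and are exactly the ``top segments'' of $L$; each nonempty proper such upset $A$ satisfies $\dim(A)=1$, so by Lemma~\ref{lem:rayGen} the ray generators of $\sigma_L$ are the vectors $e_A$ for nonempty proper upsets $A$ of $L$. Because $L$ extends $P$, every upset of $L$ is an upset of $P$; conversely, any nonempty proper upset $A$ of $P$ arises from at least one linear extension (topologically sort $\overline{A}$ first and then $A$). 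Consequently the rays of $\Sigma_P$ are exactly $\{\,e_A : A \text{ a nonempty proper upset of } P\,\}$.

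Next I would note a small dictionary: for $A \subseteq [n]$ with $A \neq \varnothing, [n]$ the formula $\dim(A) = cc(A) + cc(\overline{A}) - 1$ gives $\dim(A)\geq 1$, while $\dim(\varnothing)=\dim([n])=0$ since $P$ is connected. Hence ``upset with $\dim(A)>0$'' is literally the same condition as ``nonempty proper upset''.

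Finally, given $u \in M$, define $\phi : P \to \mathbb{Z}$ by $\phi(i) := \langle u, e_i\rangle$; the relation $e_{[n]} = 0$ in $N_{\mathbb{R}}$ forces $\sum_{i\in P}\phi(i) = 0$, and the crepant condition $\langle u, e_A\rangle = r$ for every ray of $\Sigma_P$ is exactly $\sum_{i \in A}\phi(i) = r$ for every upset $A$ with $\dim(A) > 0$. Conversely, any chamber crepant labeling $\phi$ defines a functional $u = \sum_i \phi(i)\, e_i^*$ that descends to $M$ (since the labels sum to zero) and satisfies the required evaluation on every ray generator of $\Sigma_P$. This gives both implications at once. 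I do not anticipate any real obstacle: the only non-bookkeeping step is the identification of the rays of $\Sigma_P$ with nonempty proper upsets of $P$, and this is immediate from the structure of Weyl chambers combined with Lemma~\ref{lem:rayGen}.
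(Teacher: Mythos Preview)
Your proposal is correct and follows essentially the same approach as the paper: both arguments identify the ray generators of $\Sigma_P$ as the vectors $e_A$ for nonempty proper upsets $A$ of $P$ (equivalently, upsets with $\dim(A)>0$) and then translate the crepant condition $\langle u,e_A\rangle=r$ into the chamber crepant labeling condition. You supply more detail than the paper does---explicitly running through linear extensions to enumerate the rays of $\Sigma_P$ and verifying that $\dim(A)>0$ is synonymous with $A$ being a nonempty proper upset---whereas the paper simply invokes Lemma~\ref{lem:latticept} for this identification.
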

\begin{proof}
Having a chamber crepant labeling means there is a $u \in M$ such that $\langle u, e_A \rangle = r$ for some positive integer $r$ and all upsets $A$ with $\dim(A) > 0$.
By Lemma~\ref{lem:latticept}, we see that the rays added when refining $\sigma_P$ to $\Sigma_P$ will be exactly $e_A$ such that $A$ is an upset with $\dim(A) > 0$.
So, having a chamber crepant labeling is equivalent to $U_p$ being $\mathbb{Q}$-Gorenstein and $X_{P} \dashrightarrow U_{P}$ being crepant.
\end{proof}

Before we move on, let us note that if $P$ has a $r$-Gorenstein labeling, then it only has one such labeling.  Indeed, by~\cite[Proposition 3.5 (4)]{PRW},  $\sigma_P$ is a full-dimensional cone. Thus the set  $\{e_{A} \mid \dim(A) =0,1\}$ has full rank and so the linear equations given in Definition~\ref{def:gorLab} have at most one solution.  Since the chamber crepant labelings are special cases of $r$-Gorenstein labeling (where $r$ is the index of the variety), we see that if there is a chamber crepant labeling, it is also unique.

As one can see from Lemma~\ref{lem:latticept}, once we know the upsets of a poset we can determine the ray generators of the cone. Then determining whether the corresponding toric variety is  Gorenstein or if it has a crepant resolution amounts to solving a system of linear equations. 
Proposition~\ref{prop:GorIffGorLab} and Proposition~\ref{prop:CrepIffGorLab} show we can alternatively work with labelings of the poset to solve this system.
Of course solving a linear system is easy, the  real difficulty in these problems stems from the fact that determining the upsets of a general poset is known to be difficult.
The problem of counting upsets has been shown to be \textsf{\#P}-complete~\cite{ProvanBall1983}. 

Since the ray generators correspond to dimension 1 upsets, we do not need to actually know all the upsets to determine if the variety is Gorenstein.
However, even finding the ray generators does not have an easy known solution in general. 
The cover relations of $P$, which are the edges of the Hasse diagram, give the half-space representation of polyhedron $\sigma_P$. The problem of finding the ray generators of $\sigma_P$ is equivalent to finding the vertex representation of a certain polytope.  This polytope is the projectivization that one obtains by adding the ``far face'' at infinity to $\sigma_P$. Vertex enumeration of general polyhedra given by half-space representation is known to be a hard problem~\cite{hard}. 
 
Despite the difficulties outlined above, we are able to show that under certain circumstances, there are efficient methods to solve these problems. This is what we explore in the remainder of the paper.

\section{Chamber crepant labelings and bounded posets}
\label{sec:crepant}

In~\cite{Mac} it was shown that, for any poset $P$, $\Sigma_P$ can always be obtained from $\sigma_P$ by a sequence of star subdivisions.
This means that $X_P$ and $U_P$ are related by a sequence of blow-ups.
In this section we further look at the birational map $X_P \dashrightarrow U_P$ and determine when  it is crepant.  First, we  explore how the Gorenstein and crepant property behave when taking order duals.

 If $A$ is a  upset of $P$, then $\overline{A}$ is a downset of $P$.  Since the total sum of labels in a $r$-Gorenstein labeling must be 0, this means that the sum of the labels of $A$ is completely determined by the sum of labels of $\overline{A}$ when $A$ and $\overline{A}$ are both connected.  As result, we get the following.

\begin{proposition}\label{prop:downsetLabel}
Let $P$ be a poset. The map $\phi:P \to \mathbb{Z}$ is a $r$-Gorenstein (resp.~chamber crepant) labeling if and only if 
\[\sum_{i \in P} \phi(i) = 0\]
and
\[\sum_{i \in A} \phi(i) = -r\]
for all downsets $A$  such $\dim(A)=1$  (resp.~$\dim(A)>0$). 
\end{proposition}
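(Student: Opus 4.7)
The plan is to exploit the elementary bijection between upsets and downsets given by complementation within $[n]$, together with the fact that the normalizing condition $\sum_{i \in P} \phi(i) = 0$ forces complementary sums to be negatives of one another.

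First I would record that the dimension of a subset, as defined in Section~\ref{sec:label}, is invariant under complementation. Using the formula
\[\dim(A) = cc(A) + cc(\overline{A}) - 1\]
and swapping the roles of $A$ and $\overline{A}$ gives $\dim(\overline{A}) = \dim(A)$. In particular, complementation restricts to a bijection between upsets of dimension $1$ (resp.~dimension $>0$) and downsets of dimension $1$ (resp.~dimension $>0$).

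Next, for any subset $A \subseteq [n]$, the identity
\[\sum_{i \in A} \phi(i) + \sum_{i \in \overline{A}} \phi(i) = \sum_{i \in P} \phi(i)\]
combined with $\sum_{i \in P} \phi(i) = 0$ yields
\[\sum_{i \in A} \phi(i) = -\sum_{i \in \overline{A}} \phi(i).\]
Thus, whenever $\phi$ sums to $0$ on $P$, the condition $\sum_{i \in A} \phi(i) = r$ for an upset $A$ is equivalent to $\sum_{j \in \overline{A}} \phi(j) = -r$ for the downset $\overline{A}$.

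Putting these two observations together, I would argue both directions of the equivalence simultaneously: each condition in Definition~\ref{def:gorLab} (or the chamber crepant version) over upsets of the relevant dimension translates, via $A \mapsto \overline{A}$, to the corresponding condition on downsets, with the same dimension and with $r$ replaced by $-r$. The vanishing condition $\sum_{i \in P} \phi(i) = 0$ is present on both sides, so no separate argument is needed there. I do not anticipate a main obstacle: the content is a bookkeeping translation, and the only nontrivial ingredient is the symmetry $\dim(A) = \dim(\overline{A})$ which follows immediately from the formula recalled in Section~\ref{sec:label}.
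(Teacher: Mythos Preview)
Your proposal is correct and matches the paper's own argument, which is the short paragraph immediately preceding the proposition: complementation $A \mapsto \overline{A}$ swaps upsets and downsets, and the normalization $\sum_{i \in P} \phi(i)=0$ turns the condition $\sum_{i \in A}\phi(i)=r$ into $\sum_{i \in \overline{A}}\phi(i)=-r$. You are slightly more explicit than the paper in invoking the symmetry $\dim(A)=\dim(\overline{A})$ from the formula $\dim(A)=cc(A)+cc(\overline{A})-1$, which is exactly what is needed to match the dimension constraints on both sides.
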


Upsets and downsets reverse roles when taking the order dual of a poset. Thus the previous proposition implies that if $\phi$ is an $r$-Gorenstein (resp.~chamber crepant) labeling of $P$, then $-\phi$ is a $r$-Gorenstein (resp.~chamber crepant) labeling of the order dual $P^*$.  This gives the following.

\begin{proposition}\label{prop:dual}
Let $P$ be a poset and let $P^*$ be its order dual.  $U_P$ is ($\mathbb{Q}$)-Gorenstein if and only if $U_{P^*}$ is ($\mathbb{Q}$)-Gorenstein.  Moreover, $X_P\dashrightarrow U_P$ is crepant if and only if $X_{P^*} \dashrightarrow U_{P^*}$ is crepant.
\end{proposition}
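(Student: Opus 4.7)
The plan is to establish a bijection $\phi \mapsto -\phi$ between $r$-Gorenstein labelings of $P$ and $r$-Gorenstein labelings of $P^*$, as well as between chamber crepant labelings of $P$ and of $P^*$. Once this bijection is in place, the stated equivalences follow by invoking Proposition~\ref{prop:GorIffGorLab} and Proposition~\ref{prop:CrepIffGorLab} to translate the combinatorial statement into the algebro-geometric one. The strategy is essentially to package what has already been observed immediately before the proposition, so the main task is to record the required facts carefully.

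The key observations enabling the bijection are: (i) a subset $A \subseteq [n]$ is an upset of $P^*$ if and only if $A$ is a downset of $P$; and (ii) the Hasse diagrams of $P$ and $P^*$ coincide as undirected graphs, so the formula $\dim(A) = cc(A) + cc(\overline{A}) - 1$ yields the same value whether computed with respect to $P$ or with respect to $P^*$. Together these say that the upsets of $P^*$ of dimension $d$ are exactly the downsets of $P$ of dimension $d$, for every $d$.

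Now suppose $\phi$ is an $r$-Gorenstein labeling of $P$. By Proposition~\ref{prop:downsetLabel}, $\sum_{i \in P} \phi(i) = 0$ and $\sum_{i \in A} \phi(i) = -r$ for every downset $A$ of $P$ with $\dim(A) = 1$. Negating, $\sum_{i \in P^*} (-\phi)(i) = 0$ and $\sum_{i \in A} (-\phi)(i) = r$ for every upset $A$ of $P^*$ with $\dim(A) = 1$, so $-\phi$ is an $r$-Gorenstein labeling of $P^*$. Replacing $\dim(A)=1$ by $\dim(A)>0$ throughout gives the argument for chamber crepant labelings verbatim.

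Since $\phi \mapsto -\phi$ is its own inverse, this is a true bijection. Consequently the set of $r$ for which $P$ admits an $r$-Gorenstein labeling coincides with the corresponding set for $P^*$, so by Proposition~\ref{prop:GorIffGorLab} the two toric varieties are $\mathbb{Q}$-Gorenstein simultaneously and share the same index; in particular they are Gorenstein together. The crepant equivalence then follows from Proposition~\ref{prop:CrepIffGorLab} applied to $\pm\phi$. There is no real obstacle here: all the substantive work was done in establishing Proposition~\ref{prop:downsetLabel} and the symmetry of the dimension function under complementation, and the present proposition amounts to reading off its consequences.
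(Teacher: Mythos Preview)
Your proposal is correct and follows essentially the same approach as the paper: the paper's argument (given in the paragraph immediately preceding the proposition) is precisely that upsets and downsets swap under taking the order dual, so by Proposition~\ref{prop:downsetLabel} the map $\phi\mapsto -\phi$ carries $r$-Gorenstein (resp.\ chamber crepant) labelings of $P$ to those of $P^*$, and then one reads off the conclusion via Propositions~\ref{prop:GorIffGorLab} and~\ref{prop:CrepIffGorLab}. Your write-up is simply a more detailed version of this, making explicit the preservation of $\dim(A)$ and the equality of indices.
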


Recall a poset $P$ is \emph{bounded} if $P$ has a minimal element (denoted by $\hat{0}$) and a maximal element (denoted by $\hat{1}$).

\begin{theorem}\label{thm:crepant}
Let $P$ be a poset. $X_P\dashrightarrow U_P$ is crepant if and only if $P$ is bounded.
\end{theorem}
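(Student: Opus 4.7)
The plan is to invoke Proposition~\ref{prop:crepant} and reduce the theorem to showing that $P$ admits a chamber crepant labeling if and only if $P$ is bounded. I would split into the two directions, with the ``bounded implies crepant'' direction handled by exhibiting an explicit labeling, and the ``unbounded implies not crepant'' direction handled by obtaining a contradiction from three particular upsets near the top of $P$.

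For the bounded direction, suppose $P$ has $\hat{0}$ and $\hat{1}$. I would define $\phi \colon P \to \mathbb{Z}$ by $\phi(\hat{0}) = -1$, $\phi(\hat{1}) = +1$, and $\phi(i) = 0$ for all other $i \in P$. The total sum is clearly $0$. Now let $A$ be any upset with $\dim(A) > 0$; by the formula $\dim(A) = cc(A) + cc(\overline{A}) - 1$ this forces both $A$ and $\overline{A}$ to be nonempty, i.e.\ $\varnothing \neq A \neq P$. Since $A$ is a nonempty upset, $\hat{1} \in A$; since $A \neq P$ and $A$ is an upset, $\hat{0} \notin A$ (otherwise $A$ would contain everything). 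Therefore $\sum_{i \in A} \phi(i) = 1$, so $\phi$ is a chamber crepant labeling (with $r = 1$), and Proposition~\ref{prop:crepant} gives that $X_P \dashrightarrow U_P$ is crepant.

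For the unbounded direction, assume $X_P \dashrightarrow U_P$ is crepant; in particular $U_P$ is $\mathbb{Q}$-Gorenstein and Proposition~\ref{prop:crepant} furnishes a chamber crepant labeling $\phi$ with value $r > 0$ on every upset of positive dimension. By Proposition~\ref{prop:dual} we may, after passing to the order dual if necessary, assume $P$ has no maximum. Then $P$ has at least two maximal elements $m_1, m_2$. The sets $\{m_1\}$, $\{m_2\}$, and $\{m_1, m_2\}$ are all upsets; I must check that each is a \emph{proper} nonempty subset of $P$, so that $\dim > 0$ holds for each by the component formula. For $\{m_1\}$ and $\{m_2\}$ this is immediate from $|P| \ge 2$ and $m_1 \ne m_2$; for $\{m_1, m_2\}$, if it equaled $P$ then $P$ would be a two-element antichain, whose Hasse diagram is disconnected, contradicting our standing hypothesis that $P$ is connected. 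The chamber crepant conditions then read
\[
\phi(m_1) = r, \qquad \phi(m_2) = r, \qquad \phi(m_1) + \phi(m_2) = r,
\]
which force $2r = r$ and hence $r = 0$, a contradiction. The main obstacle is just the mild bookkeeping in verifying that these three upsets really do have positive dimension, which is where the connectedness of $P$ enters essentially; everything else is either an explicit construction or three linear equations.
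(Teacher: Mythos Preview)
Your proof is correct and follows essentially the same approach as the paper. For the bounded direction you give exactly the paper's labeling $\phi(\hat{0})=-1$, $\phi(\hat{1})=1$, $\phi=0$ elsewhere, with a bit more verification; for the other direction the paper also reduces to showing there is a unique maximal element via singleton upsets $\{m\}$ (giving $\phi(m)=r$) and then sums over the set of \emph{all} maximal elements, whereas you use just two and the upset $\{m_1,m_2\}$---a cosmetic difference. Your use of Proposition~\ref{prop:dual} to reduce upfront to the ``no maximum'' case is logically equivalent to the paper's use of duality at the end to get $\hat{0}$ from $\hat{1}$.
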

\begin{proof}
($\Rightarrow$) By Definition~\ref{def:gorLab}, if $\phi$ is an chamber crepant labeling of $P$ and $m$ is a maximal element of $P$, then $\phi(m)=r$. Now let $A$ be the set of maximal elements of $P$.  Then $A$ is an upset and since $P$ is connected and non-trivial, $A\neq P$. So if $X_P\dashrightarrow U_P$ is crepant, the sum of the labels of $A$ must be $r$ and so $|A|=1$.  It follows that $P$ has a $\hat{1}$.  By Proposition~\ref{prop:dual}, the crepant property is preserved by taking order duals. Using the fact that  a poset has a $\hat{1}$ if and only if its order dual has a $\hat{0}$, we get this direction.

($\Leftarrow$) Now suppose that $P$ is bounded.  Define $\phi: P \to \mathbb{Z}$ by $\phi(\hat{0}) =-1$, $\phi(\hat{1})=1$, and $\phi(x)=0$ for all $x\in P\setminus \{\hat{0},\hat{1}\}$.  It is straightforward to show that $\phi$ is a chamber crepant labeling and so we have this direction.
\end{proof}

\section{Gorensteinness and biconnected components}
\label{sec:Gor}

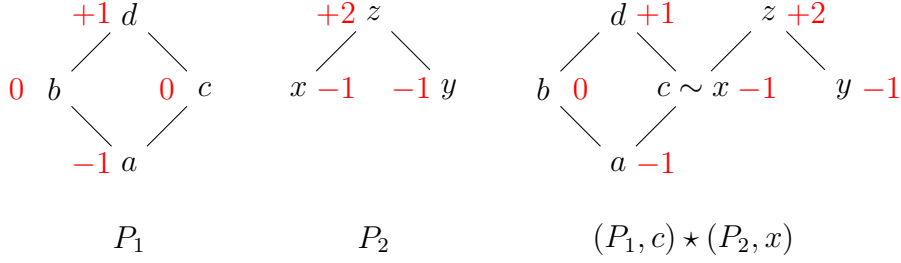
\begin{figure}
\centering
    \begin{tikzpicture}
\node (4) at (-4,3) {$d$};
\node at (-4-.5,3) {\textcolor{red}{$+1$}};
\node (2) at (-5,2) {$b$};
\node at  (-5-.5,2){\textcolor{red}{$0$}};
\node (3) at (-3,2) {$c$};
\node at  (-3-.5,2){\textcolor{red}{$0$}};
\node (1) at (-4,1) {$a$};
\node at  (-4-.5,1) {\textcolor{red}{$-1$}};

\draw (4)--(2)--(1)--(3)--(4);

\node at  (-4,0) {$P_1$};
 
 \begin{scope}[shift={(3.25,0)}]
  \node (4) at (-4,3) {$z$};
  \node at (-4-.5,3) {\textcolor{red}{$+2$}};
\node (2) at (-5,2) {$x$};
\node at  (-5+.5,2){\textcolor{red}{$-1$}};
\node (3) at (-3,2) {$y$};
\node at  (-3-.5,2){\textcolor{red}{$-1$}};

\draw (4)--(2);
\draw (3)--(4);
 \node at  (-4,0) {$P_2$};

 \end{scope}

 \begin{scope}[shift={(6.5,0)}]
\node (4) at (-4,3) {$d$};
\node at (-4+.5,3) {\textcolor{red}{$+1$}};

\node (2) at (-5,2) {$b$};
\node at  (-5+.5,2){\textcolor{red}{$0$}};

\node (3) at (-3,2) {$c\sim x$};
\node at  (-3+0.85,2){\textcolor{red}{$-1$}};

\node (1) at (-4,1) {$a$};
\node at  (-4+.5,1) {\textcolor{red}{$-1$}};

\draw (4)--(2)--(1)--(3)--(4);

 \node (6) at (-1,2){$y$};
 \node at  (-1+.5,2){\textcolor{red}{$-1$}};

 \node (7) at (-2,3) {$z$};
   \node at (-2+.5,3) {\textcolor{red}{$+2$}};

 \draw (3)--(7)--(6);
 
 \node at  (-3,0) {$(P_1, c) \op (P_2, x) $};
 \end{scope}
 
\end{tikzpicture}
\caption{Posets  $P_1$, $P_2$ and  $(P_1, c) \op (P_2, x) $ along with Gorenstein labelings in red.}
\label{fig:ex_glue}
\end{figure}

Given any two  disjoint posets $P_1$ and $P_2$ with Hasse diagrams $H_1$ and $H_2$ respectively along with $x_1 \in P_1$ and $x_2 \in P_2$ we define
\[(P_1, x_1) \op (P_2, x_2) := \left(P_1 \uplus P_2\right)/(x_1 \sim x_2)\]
which can be gotten by taking disjoint copies of $H_1$ and $H_2$ and identifying $x_1$ with $x_2$. Informally, we can think of this as ``gluing" together the posets along $x_1$ and $x_2$. See Figure~\ref{fig:ex_glue} for an example. 

In the example depicted in Figure~\ref{fig:ex_glue}, the labelings of $P_1$ and $P_2$ are Gorenstein labelings. The labeling of $(P_1, c) \op (P_2, x) $ is obtained by summing the values of the elements that were identified and keeping all the other labels the same.  One can verify that this new labeling is a Gorenstein labeling. As we show next, this is no coincidence.

\begin{lemma}\label{lem:gluing}
Let $P_1$ and $P_2$ be posets with $x_1 \in P_1$ and $x_2 \in P_2$.
If both $P_1$ and $P_2$  admit $r$-Gorenstein labelings, then $(P_1, x_1) \op (P_2, x_2)$ admits an $r$-Gorenstein labeling.
\end{lemma}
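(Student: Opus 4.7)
The plan is to construct an $r$-Gorenstein labeling $\phi$ of $P := (P_1,x_1) \op (P_2, x_2)$ directly from $\phi_1$ and $\phi_2$. Writing $x$ for the glued element, I would define $\phi(y) = \phi_1(y)$ for $y \in P_1\setminus\{x_1\}$, $\phi(y)=\phi_2(y)$ for $y\in P_2\setminus\{x_2\}$, and $\phi(x)=\phi_1(x_1)+\phi_2(x_2)$. The sum-to-zero condition $\sum_{i \in P}\phi(i)=0$ is then immediate by adding $\sum\phi_1 = 0$ and $\sum\phi_2 = 0$.

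The real work is to verify $\sum_{i \in A}\phi(i)=r$ for every upset $A$ of $P$ with $\dim(A)=1$. Two structural observations about the gluing do most of this. First, the cover relations of $P$ are exactly the disjoint union of the cover relations of $P_1$ and of $P_2$, since any relation in $P$ between $P_1\setminus\{x_1\}$ and $P_2\setminus\{x_2\}$ factors through $x$ and so cannot be a cover. Consequently $A\subseteq P$ is an upset of $P$ if and only if $A\cap P_1$ is an upset of $P_1$ and $A\cap P_2$ is an upset of $P_2$. Second, since $x$ is a cut vertex of $H(P)$, any connected subset of $P$ that avoids $x$ lies entirely in $P_1\setminus\{x_1\}$ or entirely in $P_2\setminus\{x_2\}$.

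Fix a dimension-one upset $A$, so both $A$ and $\overline{A}$ are nonempty and connected. In the case $x\notin A$, the second observation forces $A\subseteq P_j\setminus\{x_j\}$ for some $j$; say $j=1$. Then $A$ is an upset of $P_1$, and connectedness of $\overline{A}$ in $H(P)$ (using that $P_2$ is connected and meets $P_1\setminus A$ only at $x$) forces $P_1\setminus A$ to be connected in $H(P_1)$. Hence $A$ is a dimension-one upset of $P_1$ and $\sum_{i\in A}\phi(i) = \sum_{i\in A}\phi_1(i) = r$. In the case $x\in A$, the same argument applied to $\overline A$ gives, say, $\overline A\subseteq P_1\setminus\{x_1\}$; then $A\cap P_1$ is a dimension-one upset of $P_1$ containing $x_1$, and $A\cap P_2 = P_2$. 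A direct computation then yields
\[\sum_{i\in A}\phi(i) = \sum_{i\in A\cap P_1}\phi_1(i) + \sum_{i\in P_2}\phi_2(i) = r + 0 = r,\]
where the definition $\phi(x)=\phi_1(x_1)+\phi_2(x_2)$ is exactly what allows the $P_2$-contribution to telescope to zero via $\sum\phi_2=0$.

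The main obstacle is the combinatorial reduction showing that every dimension-one upset of $P$ restricts, after the case split on whether $x \in A$, to a dimension-one upset of one of the factors $P_1,P_2$; this is where the two structural observations about the gluing really get used. Once that reduction is in hand, the arithmetic identities are essentially forced by the definition of $\phi$ and the hypotheses on $\phi_1,\phi_2$.
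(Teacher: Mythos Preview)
Your proof is correct and follows essentially the same approach as the paper: the same definition of $\phi$ via $\phi(x)=\phi_1(x_1)+\phi_2(x_2)$, the same case split on whether $x\in A$, and the same reduction showing that a dimension-one upset of $P$ restricts (on the appropriate side) to a dimension-one upset of a factor. Your version is in fact slightly more careful than the paper's in verifying the sum-to-zero condition and in justifying that the restricted upset has dimension one in the factor.
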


\begin{proof}
Let $\phi_1$ and $\phi_2$ be $r$-Gorenstein labelings for $P_1$ and $P_2$ respectively.
Also, let $P = (P_1, x_1) \op (P_2, x_2)$. Let $\phi:P \to \mathbb{Z}$ given by
\[\phi(z) = \begin{cases}\phi_1(z) & \text{if } z \in P_1 \setminus \{x_1\}, \\ \phi_2(z) & \text{if } z \in P_2 \setminus \{x_2\}, \\ \phi_1(z) + \phi_2(z) & \mathrm{if }\, z = x_1 = x_2.\end{cases}\]
We claim that $\phi$  is an $r$-Gorenstein labeling for $P$.
Let $A \subseteq P$ be an upset with $\dim(A) = 1$.
First suppose that $A$ does not contain $x_1=x_2$. Then either $A \subseteq P_1$ and
\[\sum_{z \in A} \phi(z) = \sum_{z \in A} \phi_1(z) = r\]
or else $A \subseteq P_2$ and the result follows by same calculation with $\phi_2$ in place of $\phi_1$.
Otherwise $x_1 = x_2$ is in  $A$.
Then it must be that $P_1 \subseteq A$ or $P_2 \subseteq A$.
If not $\bar{A} \cap P_1$ and $\bar{A} \cap P_2$ are nonempty and disjoint. However this would imply that $\dim(A)>1$.
When $P_1 \subseteq A$ we have that
\begin{align*}
   \sum_{z \in A} \phi(z) &= \sum_{z \in A \cap (P_1 \setminus \{x_1\})} \phi(z) + \phi(x_1 = x_2) +  \sum_{z \in A \cap (P_2 \setminus \{x_2\})} \phi(z)\\
   &= \sum_{z \in A \cap (P_1 \setminus \{x_1\})} \phi_1(z) + \phi_1(x_1)  +\phi_2(x_2) +  \sum_{z \in A \cap (P_2 \setminus \{x_1\})} \phi_2(z)\\
   &= \sum_{z \in P_1} \phi_1(z) + \sum_{z \in A \cap P_2} \phi_2(z)\\
   &= 0+r\\
   &=r
\end{align*}
where the second to last line holds since $A\cap P_2$ is an upset of dimension 1 in $P_2$.  A similar calculation also holds in the case that $P_2\subseteq  A$.   Thus, the lemma is proven.
\end{proof}

Recall, a graph is biconnected if it is connected and remains connected after the removal of any single vertex. A \emph{biconnected component} of a graph is a maximal biconnected subgraph.
For any graph its biconnected components and cut vertices form the vertex set of the \emph{block-cut tree}. In this tree, we have an edge between a biconnected component and a cut vertex if and only if the cut vertex is contained in the biconnected component. 
The next lemma will be proven by induction using biconnected components and the block-cut tree of a Hasse diagram.

\begin{lemma}
If $P$ admits an $r$-Gorenstein labeling, then each biconnected component of $P$ admits an $r$-Gorenstein labeling.
\label{lem:bicomponent}
\end{lemma}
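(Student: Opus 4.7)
The plan is to induct on $|P|$. If $P$ is biconnected, it is its own sole biconnected component and the given labeling works. Otherwise, pick a cut vertex $v$ of $P$. Since no Hasse edge of $P$ connects two different components of $H \setminus \{v\}$, we may partition the components of $H \setminus \{v\}$ into two nonempty groups and write $P = (P_1, v) \op (P_2, v)$, where $P_i$ is the induced subposet on $\{v\}$ together with one of the groups. Each $|P_i| < |P|$, each $P_i$ is connected, and the biconnected components of $P$ are precisely the disjoint union of those of $P_1$ and of $P_2$. It therefore suffices to show that each $P_i$ inherits an $r$-Gorenstein labeling and then apply the inductive hypothesis.

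Given $\phi$, I would define $\phi_i : P_i \to \mathbb{Z}$ by $\phi_i(x) = \phi(x)$ for $x \in P_i \setminus \{v\}$ and
\[\phi_i(v) = -\sum_{x \in P_i \setminus \{v\}} \phi(x).\]
By construction $\sum_{x \in P_i} \phi_i(x) = 0$, and summing over $P$ one sees $\phi_1(v) + \phi_2(v) = \phi(v)$. The claim is that $\phi_i$ is an $r$-Gorenstein labeling of $P_i$.

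To verify this, take an upset $A \subseteq P_1$ with $\dim_{P_1}(A) = 1$ and split into two cases. (a) If $v \notin A$, then $A$ is an upset of $P$ and $\phi_1 = \phi$ on $A$, so $\sum_{x \in A} \phi_1(x) = \sum_{x \in A} \phi(x) = r$. (b) If $v \in A$, then $P_1 \setminus A$ is a downset of $P$, and since $\phi_1 = \phi$ on $P_1 \setminus A$ and $\sum_{x \in P_1} \phi_1(x) = 0$, we have
\[\sum_{x \in A} \phi_1(x) = -\sum_{x \in P_1 \setminus A} \phi(x) = r\]
by the downset form of the Gorenstein conditions in Proposition~\ref{prop:downsetLabel}. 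The same argument applies to $P_2$.

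The main obstacle is the dimension bookkeeping: to invoke the conditions on $\phi$, one must check $\dim_P(A) = 1$ in case (a) and $\dim_P(P_1 \setminus A) = 1$ in case (b). Both follow from the formula $\dim(B) = cc(B) + cc(\overline{B}) - 1$ applied in $P_1$ and in $P$, together with the facts that $P_2$ is connected and that $v$ lies in $P_2$ and in either $A$ or $P_1 \setminus A$, so the $P_2$-contribution to the complement forms exactly one extra component that merges through $v$ with one component on the $P_1$ side. Once these dimension identities are established, each $\phi_i$ is an $r$-Gorenstein labeling and the induction closes.
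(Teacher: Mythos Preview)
Your proof is correct and follows essentially the same approach as the paper: split at a cut vertex, push the full $\phi$-mass of the other side onto the cut vertex, and induct. The paper handles your case~(b) by extending the upset to $B = A \cup P_2$ rather than passing to the complementary downset via Proposition~\ref{prop:downsetLabel}, but these are the same computation; the paper also asserts the dimension equalities $\dim_P(A)=1$ and $\dim_P(B)=1$ without the explicit $cc$-bookkeeping you outline.
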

\begin{proof}
If $P$ is biconnected, we are done.  So we may assume it is not biconnected.
Let $x_0$ be a cut vertex such that $X \cup Y = P$, $X \cap Y = \{x_0\}$, and $P|_X$ is a leaf of the block-cut tree.
Let $P_X = P|_X$ and $P_Y = P|_Y$ both of which are connected posets.
We show both $P_X$ and $P_Y$ have $r$-Gorenstein labelings.

By assumption $P$  has an $r$-Gorenstein labeling, $\phi:P \to \mathbb{Z}$. Define $\phi_X: P_X \to \mathbb{Z}$ by
\[\phi_X(x) = \begin{cases} \phi(x) & \text{if } x \neq x_0, \\ \sum_{y \in Y} \phi(y) & \text{if } x = x_0.\end{cases}\]

We claim this is an $r$-Gorenstein labeling of $P_X$.
Indeed, suppose that $A \subseteq P_X$ is an upset of dimension 1.   If $x_0 \not\in A$, then
\[\sum_{x \in A} \phi_X(x) = \sum_{x \in A} \phi(x) = r\]
since $A$ is also an upset of $P$ with $\dim(A) = 1$.
In the case  where $x_0 \in A$, we let $B = A \cup Y$, which is an upset of $P$ with $\dim(B) = 1$.
It follows that
\begin{align*}
    \sum_{x \in A} \phi_X(x) &= \phi_X(x_0) + \sum_{x \in A \setminus \{x_0\}} \phi_X(x)\\
    &= \sum_{x \in Y} \phi(x) + \sum_{x \in A \setminus \{x_0\}} \phi(x)\\
    &= \sum_{x \in B} \phi(x)\\
    &= r
\end{align*}
and so $P_X$ has a $r$-Gorenstein labeling.

The same argument shows that the function $\phi_Y: P_Y \to \mathbb{Z}$   defined by 
\[\phi_Y(x) = \begin{cases} \phi(x) & \text{if } x \neq x_0 \\ \sum_{y \in X} \phi(y) & \text{if } x = x_0\end{cases}\]
is  an  $r$-Gorenstein labeling of $P_Y$.  The result now holds by induction  on the number of biconnected components of $P$.
\end{proof}

\begin{theorem}
 A poset $P$ has an $r$-Gorenstein labeling if and only if each biconnected component of $P$ has an $r$-Gorenstein labeling.
 \label{thm:bicon}
\end{theorem}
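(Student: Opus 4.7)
The theorem is an ``if and only if'' and the two lemmas preceding it almost package the two directions directly. The plan is to observe that Lemma~\ref{lem:bicomponent} gives the forward direction verbatim: if $P$ admits an $r$-Gorenstein labeling, then so does each biconnected component. So the real content is the reverse direction, and for that the natural tool is Lemma~\ref{lem:gluing} applied repeatedly according to the block-cut tree of the Hasse diagram of $P$.

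For the reverse direction, I would induct on the number of biconnected components of $P$. The base case (a single biconnected component) is immediate since then $P$ equals its unique biconnected component. For the inductive step, assume every biconnected component of $P$ has an $r$-Gorenstein labeling. Pick a leaf of the block-cut tree, that is, a biconnected component $B$ which shares a single cut vertex $x_0$ with the rest of the Hasse diagram (such a leaf exists because any tree on at least two vertices has a leaf, and interior vertices of the block-cut tree adjacent to multiple blocks cannot be leaves). Let $P'$ be the subposet of $P$ induced by $\left(P \setminus B\right) \cup \{x_0\}$. Then $P = (B, x_0) \op (P', x_0)$ as posets (up to the identification inherent in $\op$), and the biconnected components of $P'$ are exactly the biconnected components of $P$ other than $B$. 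So by the inductive hypothesis $P'$ has an $r$-Gorenstein labeling, and by hypothesis so does $B$; Lemma~\ref{lem:gluing} then produces an $r$-Gorenstein labeling of $P$.

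The one point that deserves a line of care, and which I view as the only mild obstacle, is the claim that the biconnected components of $P'$ really are the biconnected components of $P$ minus $B$. This is a standard graph-theoretic fact about block-cut trees: deleting the vertices of a leaf block except for its cut vertex does not merge or split any of the remaining blocks, because those blocks are separated from $B \setminus \{x_0\}$ by $x_0$. I would either cite this from a standard graph theory reference or spell out a short argument: any two vertices of $P'$ that lie on a common cycle in $P$ must lie on a common cycle avoiding $B \setminus \{x_0\}$, since a cycle through a vertex of $B \setminus \{x_0\}$ would have to enter and leave the block $B$ twice at the cut vertex $x_0$, contradicting the definition of a block. With this in hand, the induction is complete, and combining both directions yields the theorem.
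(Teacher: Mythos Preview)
Your proof is correct and follows essentially the same approach as the paper: the forward direction is Lemma~\ref{lem:bicomponent}, and the reverse direction is obtained by iterating Lemma~\ref{lem:gluing} along the block-cut tree. The paper's proof is just a one-line sketch of this induction, whereas you have (correctly) spelled out the leaf-removal step and the fact that the remaining biconnected components are unchanged.
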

\begin{proof}
 The forward direction of the theorem is exactly Lemma~\ref{lem:bicomponent}.
 The backwards direction follows from Lemma~\ref{lem:gluing}, noting that $P$ can be built from the operation $\star$ applied to the biconnected components and cut vertices.
\end{proof}

\section{Posets with a minimum element or a maximum element}\label{sec:posetsWith0or1}
 
In this section we explore when a poset with a $\hat{0}$ or a $\hat{1}$ has a Gorenstein labeling.   We show that if $P$ has a $\hat{0}$ or a  $\hat{1}$, then $U_P$ is  Gorenstein if and only if it is $\mathbb{Q}$-Gorenstein (Theorem~\ref{thm:posetW0Or1GorIffQGor}).  Moreover, we will see that whether $U_P$ is Gorenstein, depends on the M\"obius function of $P$.  For posets $P$ with a $\hat{0}$ or a $\hat{1}$, we are able to give a  characterization of when   $U_P$ is Gorenstein (Theorem~\ref{thm:characterizationOfRankedPosetGor}). Using this characterization, we give an algorithm to check if $U_P$ is Gorenstein in this case (Algorithm~\ref{alg:RankedPosAlg}).  The characterization and the algorithm both make use of quotient posets.

A poset has a $\hat{0}$ if and only if its  order dual has a $\hat{1}$. By Proposition~\ref{prop:dual}, we know that $U_P$ is Gorenstein if and only if $U_{P^*}$ is Gorenstein.    Therefore we  only need to prove the results when either $P$ has a $\hat{0}$ or has a $\hat{1}$.  We believe the presentation of the proofs is simpler in the case that the poset has a $\hat{1}$ and thus we have chosen to present the proofs this way.  

Throughout this section, we will often need to consider downsets generated by a single element.  We will use the notation $\downset{x}$ to denote the downset generated by $x$. Similarly we will use $\downset{S}$ for the downset generated by the set $S$.
 
\subsection{$\mathbb{Q}$-Gorenstein implies Gorenstein}
 If $P$ has  a $\hat{1}$ then for any downset $A\neq P$, $\overline{A}$ is connected. Since any principal downset is connected, this forces any possible Gorenstein labeling to be completely determined by principal downsets.  This allows us to give a recursive definition for an $r$-Gorenstein labeling.

\begin{lemma}\label{lem:rGorUniqueLabel}
Let $P$ be a poset with a $\hat{1}$.  If $P$ has a $r$-Gorenstein labeling, then this labeling is given by
$$
\sum_{y \leq x} \phi(y) = r(\delta_{x,\hat{1}}-1)
$$
where $\delta_{x,\hat{1}}$ is the Kronecker delta function.

\end{lemma}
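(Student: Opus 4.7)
The plan is to derive the formula directly from Proposition~\ref{prop:downsetLabel}, which gives the equivalent characterization of $r$-Gorenstein labelings in terms of downsets. The key observation is that when $P$ has a maximum element $\hat{1}$, every principal downset $\downset{x}$ with $x \neq \hat{1}$ is a downset of dimension $1$, and therefore we can read off its total $\phi$-sum as $-r$.

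First I would handle the case $x = \hat{1}$: since $\downset{\hat{1}} = P$, the formula reduces to $\sum_{y \in P} \phi(y) = 0$, which is built into the definition of an $r$-Gorenstein labeling (and is part of Proposition~\ref{prop:downsetLabel}). Next, for $x \neq \hat{1}$ I want to show $\dim(\downset{x}) = 1$, i.e., that both $\downset{x}$ and $\overline{\downset{x}}$ are connected subgraphs of the Hasse diagram. Connectivity of $\downset{x}$ is immediate because $x$ is a maximum of this downset, so every element $y \leq x$ is joined to $x$ by a saturated chain inside $\downset{x}$.

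The key step is to argue that $\overline{\downset{x}}$ is connected, and this is where the hypothesis that $P$ has a $\hat{1}$ is used. Given any $y \in \overline{\downset{x}}$ (so $y \not\leq x$), pick any saturated chain $y = y_0 \lessdot y_1 \lessdot \cdots \lessdot y_k = \hat{1}$ up to the top. I would observe that every $y_i$ satisfies $y_i \not\leq x$: indeed, if $y_i \leq x$ then $y_0 \leq y_i \leq x$ contradicts $y \not\leq x$. Hence the whole chain lies in $\overline{\downset{x}}$, showing that $y$ is connected to $\hat{1}$ inside $\overline{\downset{x}}$. Thus $\overline{\downset{x}}$ is connected and, using $\dim(\downset{x}) = cc(\downset{x}) + cc(\overline{\downset{x}}) - 1$, we conclude that $\dim(\downset{x}) = 1$. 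Applying Proposition~\ref{prop:downsetLabel} then yields $\sum_{y \leq x} \phi(y) = -r$, matching $r(\delta_{x,\hat{1}} - 1)$ for $x \neq \hat{1}$.

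The only real obstacle is the connectivity argument for $\overline{\downset{x}}$; everything else is a direct bookkeeping application of Proposition~\ref{prop:downsetLabel}. The word ``given'' in the conclusion is appropriate since the cumulative sums $\sum_{y \leq x} \phi(y)$, taken over all $x \in P$, determine $\phi$ uniquely via Möbius inversion on $P$, so the displayed formula does in fact pin down the labeling.
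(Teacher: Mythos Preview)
Your proposal is correct and follows essentially the same approach as the paper: both reduce to Proposition~\ref{prop:downsetLabel} by showing that for $x \neq \hat{1}$ the principal downset $\downset{x}$ has dimension $1$. The paper states the connectivity of $\overline{\downset{x}}$ slightly more tersely (noting simply that the complement of any proper downset in a poset with $\hat{1}$ is connected), while you spell out the chain-to-$\hat{1}$ argument explicitly, but the content is the same.
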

\begin{proof}
Since $P$ has a $\hat{1}$, if $A$ is a proper downset, then $\overline{A}$ is connected.   Now suppose that $A = \downset{x}$ with $x\neq\hat{1}$.  Then since $x$ is the unique maximum element of $A$,  $A$ is  connected.  Thus, $\dim(A)=1$.
 
Now $\downset{x} = P$ if and only if $x=\hat{1}$. It follows that from  Proposition~\ref{prop:downsetLabel} that
$$
\sum_{y\in \downset{x}} \phi(y)   =   r(\delta_{x,\hat{1}}-1)
$$
or equivalently, 
$$
     \sum_{y \leq  x} \phi(y)   =   r(\delta_{x,\hat{1}}-1).
$$
Thus, the result holds.     
\end{proof}

We are now ready to prove one of the main theorems in this section.

 \begin{theorem}\label{thm:posetW0Or1GorIffQGor}
 Let $P$ be a poset with a $\hat{0}$ or a $\hat{1}$.  Then  $U_P$ is Gorenstein if and only if $U_P$ is $\mathbb{Q}$-Gorenstein.
\end{theorem}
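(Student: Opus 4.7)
The plan is to leverage the uniqueness of an $r$-Gorenstein labeling, noted just after Proposition~\ref{prop:CrepIffGorLab}, together with M\"obius inversion to produce a $1$-Gorenstein labeling out of an $r$-Gorenstein labeling. By Proposition~\ref{prop:dual}, I may assume $P$ has a $\hat{1}$; the case of a $\hat{0}$ follows by passing to the order dual. The forward direction (Gorenstein implies $\mathbb{Q}$-Gorenstein) is immediate from the definitions, so the task reduces to the reverse implication.

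Suppose then that $U_P$ is $\mathbb{Q}$-Gorenstein with index $r$, and let $\phi : P \to \mathbb{Z}$ be an $r$-Gorenstein labeling. By Lemma~\ref{lem:rGorUniqueLabel}, $\phi$ satisfies
\[\sum_{y \leq x} \phi(y) = r(\delta_{x, \hat{1}} - 1)\]
for every $x \in P$. I would then apply M\"obius inversion on the finite poset $P$ to recover
\[\phi(x) = r \sum_{y \leq x} \mu(y, x)(\delta_{y, \hat{1}} - 1).\]
Since the M\"obius function of $P$ is integer-valued, this exhibits $\phi$ as $r$ times an integer-valued function $\psi : P \to \mathbb{Z}$, namely $\psi(x) := \sum_{y \leq x} \mu(y, x)(\delta_{y, \hat{1}} - 1)$.

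Next, I would argue that $\psi$ is itself a $1$-Gorenstein labeling. The defining equations for an $r$-Gorenstein labeling in Definition~\ref{def:gorLab} are linear and homogeneous in the labels, so dividing each of the equations $\sum_{i \in P} \phi(i) = 0$ and $\sum_{i \in A} \phi(i) = r$ (for all upsets $A$ with $\dim(A) = 1$) through by $r$ yields exactly the equations that $\psi$ would need to satisfy. Hence $\psi$ is a $1$-Gorenstein labeling, and Proposition~\ref{prop:GorIffGorLab} then forces the index of $U_P$ to be $1$, i.e., $U_P$ is Gorenstein.

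The only point that requires actual content is the integrality of $\psi$, which rests on the integer-valuedness of $\mu$ on any locally finite poset together with the fact that, thanks to Lemma~\ref{lem:rGorUniqueLabel}, the principal downset equations alone already determine $\phi$ by M\"obius inversion. I do not anticipate a serious obstacle beyond verifying that M\"obius inversion applies in the intended form; everything else is bookkeeping using the uniqueness of the labeling.
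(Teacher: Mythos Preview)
Your proposal is correct and follows essentially the same path as the paper: reduce to the $\hat 1$ case via Proposition~\ref{prop:dual}, invoke Lemma~\ref{lem:rGorUniqueLabel} to pin down $\sum_{y\le x}\phi(y)=r(\delta_{x,\hat 1}-1)$, deduce $r\mid\phi(x)$ for all $x$, and conclude that $\phi/r$ is a $1$-Gorenstein labeling. The only cosmetic difference is that the paper proves $r\mid\phi(x)$ by a direct induction on the length of a longest chain below $x$ (unwinding the recursion $\phi(x)=-\sum_{y<x}\phi(y)-r$), whereas you package the same computation as a single M\"obius inversion; the paper in fact makes this M\"obius connection explicit immediately afterward in Lemma~\ref{lem:MobAndPhi}.
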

\begin{proof}
We prove the result when $P$ has a $\hat{1}$ which suffices by Proposition~\ref{prop:dual}. The forward direction is immediate given the definitions of Gorenstein and $\mathbb{Q}$-Gorenstein.

For the backwards direction, it suffices to show that  if $\phi$ is the unique $r$-Gorenstein labeling given in Lemma~\ref{lem:rGorUniqueLabel}, then $r\mid \phi(x)$ for all $x\in P$.  We do this by inducting on the maximum length of a chain from a minimal element to $x$.  If the length is 0, then $x$ must be a minimal element. So by Lemma~\ref{lem:rGorUniqueLabel}, $\phi(x)=-r$ and so the base case holds.

Now suppose that the maximum length of a  chain from $x$ to a minimal element of $P$ is $k$ and that $x\neq \hat{1}$. Then by Lemma~\ref{lem:rGorUniqueLabel},
$$
\phi(x) = -\sum_{y<x} \phi(y) -r.
$$
By the inductive hypothesis, we have that $r\mid \phi(y)$ for all $y<x$.  Thus, $r\mid \phi(x)$ in this case. Finally, by Lemma~\ref{lem:rGorUniqueLabel}, 
$$
\phi(\hat{1})  = -\displaystyle\sum_{y\neq \hat{1}} \phi(y).
$$
7We have already shown that  $r\mid \phi(y)$ for all $y\neq \hat{1}$ and so $r\mid \phi(\hat{1})$.  It follows that $\dfrac{1}{r}\phi(x) \in \mathbb{Z}$ for all $x$ and so   $\dfrac{1}{r}\phi(x)$ is a Gorenstein labeling of $P$.
\end{proof}

\subsection{Relationship with the M\"obius function}

Given the previous theorem, we will only focus on Gorenstein labelings (as opposed to $r$-Gorenstein labelings) in the remainder of this section.  In the next proposition, we   show that the labeling described in  Lemma~\ref{lem:rGorUniqueLabel} (when $r=1$) is related to the M\"obius function.    In this proposition and elsewhere in this section, we will be considering  a new poset, namely, the   poset  obtained by adding a minimum element $\hat{0}$ to $P$. We note that we add a $\hat{0}$ even if $P$ already has a minimum element.  The poset obtained by adding a $\hat{0}$ to $P$ will be denoted by $\widehat{P}$.  We now give the definition of the M\"obius function.

\begin{definition}
Let $P$ be a poset with a $\hat{0}$. The \emph{(one-variable) M\"obius function}, $\mu_P(x): P \to \mathbb{Z}$, is defined recursively by
$$
\sum_{y\leq x} \mu_P(y)  = \delta_{\hat{0},x}
$$
where $ \delta_{\hat{0},x}$ is Kronecker delta function.
\end{definition}

We will use the notation $\mu_{\widehat{P}}(x)$ for the M\"obius function of $x$ in $\widehat{P}$. See Figure~\ref{fig:MP} for examples of the M\"obius function.   From time to time, we  will need to consider M\"obius functions of  different posets. In these cases, we will use the subscripts  to distinguish them.

\begin{lemma}\label{lem:MobAndPhi}
Let $P$ be a poset with $\hat{1}$ and let $\phi$ be the labeling described in Lemma~\ref{lem:rGorUniqueLabel} when $r=1$.    Then for all $x\in P$,
$$
\phi(x) = 
\begin{cases}
\mu_{\widehat{P}}(x)& \mbox{if $x\neq \hat{1}$,}\\
  \mu_{\widehat{P}}(x) +1 & \mbox{if $x=\hat{1}$.}\\
\end{cases}
$$
\end{lemma}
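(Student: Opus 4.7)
The plan is to exploit the uniqueness of the labeling $\phi$ asserted by Lemma~\ref{lem:rGorUniqueLabel} and reduce the claim to a straightforward computation involving the recursive definition of $\mu_{\widehat{P}}$. Concretely, I would argue that any function $P \to \mathbb{Z}$ satisfying the relations
\[\sum_{y \leq x} f(y) = \delta_{x,\hat{1}} - 1 \qquad (x \in P)\]
is uniquely determined: these form a triangular linear system (solve for $f(x)$ in order of increasing maximum chain length from a minimal element to $x$). By Lemma~\ref{lem:rGorUniqueLabel} with $r=1$, $\phi$ is one such solution, so it suffices to verify that the function $\psi$ on $P$ defined by the right-hand side of the claim also satisfies this system.

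The key computation is to evaluate $\sum_{y \in P,\, y \leq x} \mu_{\widehat{P}}(y)$. Since every $x \in P$ is strictly above $\hat{0}$ in $\widehat{P}$, the defining recursion of the M\"obius function gives
\[0 = \delta_{\hat{0},x} = \sum_{y \in \widehat{P},\, y \leq x} \mu_{\widehat{P}}(y) = \mu_{\widehat{P}}(\hat{0}) + \sum_{y \in P,\, y \leq x} \mu_{\widehat{P}}(y),\]
and since $\mu_{\widehat{P}}(\hat{0}) = 1$, we obtain $\sum_{y \in P,\, y \leq x} \mu_{\widehat{P}}(y) = -1$ for every $x \in P$. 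I would then check the two cases separately: if $x \neq \hat{1}$, then $\hat{1} \not\leq x$, so $\psi$ and $\mu_{\widehat{P}}$ agree on the interval $\{y \in P : y \leq x\}$ and the sum is $-1 = \delta_{x,\hat{1}} - 1$; if $x = \hat{1}$, the sum over $P$ of $\psi$ equals the sum over $P$ of $\mu_{\widehat{P}}$ plus the $+1$ correction at $\hat{1}$, giving $-1 + 1 = 0 = \delta_{\hat{1},\hat{1}} - 1$.

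With both relations verified, uniqueness yields $\phi = \psi$, which is exactly the claim. I do not foresee a significant obstacle here: the main point is to be careful about the distinction between sums taken in $\widehat{P}$ (which include $\hat{0}$) and sums taken in $P$ (which do not), and to note that the single $+1$ adjustment at $\hat{1}$ precisely absorbs the contribution of $\mu_{\widehat{P}}(\hat{0}) = 1$ that is missing when one restricts to $P$.
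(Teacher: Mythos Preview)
Your proposal is correct and is essentially the same argument as the paper's: both exploit that the relations $\sum_{y\le x}\phi(y)=\delta_{x,\hat 1}-1$ form a unitriangular system determined by the M\"obius recursion in $\widehat{P}$. The paper carries out an explicit induction on chain length to identify $\phi(x)$ with $\mu_{\widehat P}(x)$ (for $x\neq\hat 1$), whereas you verify in one stroke that the M\"obius function with the $+1$ correction at $\hat 1$ satisfies the same system and then invoke uniqueness; these are two packagings of the same computation.
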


\begin{proof}
We prove the result for $x\neq \hat{1}$, the argument for  $x=\hat{1}$ is  similar.

We induct on the maximum length of a chain from a minimal element of $P$ to $x$. If the length is 0, then $x$ is minimal.  Using the definition of the M\"obius function, we have that $\mu_{\widehat{P}}(x) =-1$.  This agrees with the value given in Lemma~\ref{lem:rGorUniqueLabel} (with $r=1$) and so the base case holds.  

Now suppose that the maximum length is more than 0. Then, by  Lemma~\ref{lem:rGorUniqueLabel} (with $r=1$), we have that 
$$
\sum_{y \leq x} \phi(y) = -1.
$$
Rearranging gives us that
$$
\phi(x) = -\sum_{y < x} \phi(y) -1.
$$
By the inductive hypothesis, $\phi(y) =\mu_{\widehat{P}}(y)$ for all $y<x$. Moreover, by definition, $\mu_{\widehat{P}}(\hat{0}) =1$. Thus the previous displayed equation becomes
$$
\phi(x) = -\sum_{y < x} \mu_{\widehat{P}}(y) - \mu_{\widehat{P}}(\hat{0}).
$$
Again using the definition of the M\"obius function, we see that the righthand side is exactly $\mu_{\widehat{P}}(x )$. Thus the result holds by induction.   
\end{proof}

Combining  Lemma~\ref{lem:rGorUniqueLabel} and Lemma~\ref{lem:MobAndPhi}, we get the following characterization of when $U_P$ is Gorenstein in terms of the M\"obius function.

\begin{theorem}\label{thm:mobGorensteinLab}
 Let $P$ be a poset with a $\hat{1}$.  If $\phi$ is a Gorenstein labeling of  $P$, then 
$$
\phi(x) =
\begin{cases}
\mu_{\widehat{P}}(x) &\mbox{if } x\neq \hat{1},\\
\mu_{\widehat{P}}(x)+1  &\mbox{if } x= \hat{1}.\\
\end{cases}
$$
Consequently, $U_P$ is Gorenstein if and only if 
$$
\sum_{x\in A} \mu_{\widehat{P}}(x) =-1
$$
for all  downsets $A$ of dimension 1.
\end{theorem}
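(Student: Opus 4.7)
The first assertion is essentially immediate from the preceding two lemmas. The plan is to invoke Lemma~\ref{lem:rGorUniqueLabel} with $r=1$, which uniquely determines a Gorenstein labeling via the recursion $\sum_{y\leq x}\phi(y)=\delta_{x,\hat{1}}-1$, and then apply Lemma~\ref{lem:MobAndPhi} to identify this recursion with the M\"obius function of $\widehat{P}$ (with the $+1$ correction at $\hat{1}$). The uniqueness noted after Proposition~\ref{prop:CrepIffGorLab} ensures the formula describes any Gorenstein labeling.

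For the ``consequently'' part, I would rewrite the Gorenstein condition using the downset characterization from Proposition~\ref{prop:downsetLabel}: a map $\phi$ is a Gorenstein labeling iff $\sum_{x\in P}\phi(x)=0$ and $\sum_{x\in A}\phi(x)=-1$ for every downset $A$ of dimension 1. The key simplifying observation is that any downset of $P$ that contains $\hat{1}$ must equal $P$, so every dimension-1 downset is a proper subset missing $\hat{1}$. This means that on any such $A$, the candidate labeling from the first part agrees with $\mu_{\widehat{P}}$ on the nose (the $+1$ correction at $\hat{1}$ never appears).

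The forward direction of the equivalence is then immediate: if $U_P$ is Gorenstein, the labeling $\phi$ from the first part is a genuine Gorenstein labeling, so for any dimension-1 downset $A$ we get $\sum_{x\in A}\mu_{\widehat{P}}(x)=\sum_{x\in A}\phi(x)=-1$. For the converse, I would define $\phi$ by the displayed formula and verify the two conditions of Proposition~\ref{prop:downsetLabel}. Condition (b) on dimension-1 downsets holds by the hypothesis together with the observation that $\hat{1}\notin A$. For condition (a), I would use the defining recursion of the M\"obius function at $\hat{1}$ in $\widehat{P}$, namely $\sum_{y\leq \hat{1}}\mu_{\widehat{P}}(y)=\delta_{\hat{0},\hat{1}}=0$ (which is $0$ since $|P|\geq 2$); subtracting $\mu_{\widehat{P}}(\hat{0})=1$ yields $\sum_{x\in P}\mu_{\widehat{P}}(x)=-1$, and adding the $+1$ correction at $\hat{1}$ gives a total of $0$, as required. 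Then Proposition~\ref{prop:GorIffGorLab} concludes that $U_P$ is Gorenstein.

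There is no real obstacle here; the work is bookkeeping that assembles Lemma~\ref{lem:rGorUniqueLabel}, Lemma~\ref{lem:MobAndPhi}, and Proposition~\ref{prop:downsetLabel}. The only conceptual step worth emphasizing is the trivial but crucial fact that a proper downset of a poset with a $\hat{1}$ cannot contain $\hat{1}$, which is what lets the $\mu_{\widehat{P}}$-sum condition cleanly replace the full Gorenstein labeling condition.
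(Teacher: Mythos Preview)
Your proposal is correct and follows the same approach as the paper, which states the theorem as an immediate consequence of Lemma~\ref{lem:rGorUniqueLabel} and Lemma~\ref{lem:MobAndPhi} without further argument. You have simply written out the bookkeeping that the paper leaves implicit, including the verification of the total-sum condition via $\sum_{x\in P}\mu_{\widehat P}(x)=-1$ and the observation that $\hat 1$ lies in no proper downset.
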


\subsection{A characterization and algorithmic considerations}

As we will see when $P$ has a $\hat{1}$,  the minimal elements of $P$ and the elements covering them play a crucial role in determining if $U_P$ is Gorenstein.  Because of this, we introduce notation for these elements.  We will use  $M_P$ to denote the induced subposet of elements that are either minimal or cover only  minimal elements.  That is,
$$
M_P =\{ x\in P \mid \mbox{if } y\lessdot x, \mbox{ then $y$ is minimal}\}.
$$ 
See Figure~\ref{fig:MP} for two examples.  Note that in the example, $e\notin M_Q$.  This is because while $e$ does  cover $b$, it also covers $c$ and $d$ which are not minimal.

 \begin{figure} 
     \begin{tikzpicture}
     \begin{scope}[scale=2,rotate=180, xscale=-1]
     
      \node (1) at (0,0) {$\hat{1}$};
      
        \node  (a) at (-1,1) {$f$};
        \node  (b) at (0,1) {$g$};
        \node  (c) at (1,1) {$h$};
        \node (d) at (-1,2) {$\textcolor{blue}{c}$};
         \node (e) at (0,2) {$\textcolor{blue}{d}$};
          \node (f) at (1,2) {$\textcolor{blue}{e}$};
           \node (g) at (-.5,3) {$\textcolor{blue}{a}$};
            \node (h) at (.5,3) {$\textcolor{blue}{b}$};

            \node at (0-.25,0) {\textcolor{red}{$-1$}};
             \node   at (-1-.25,1) {\textcolor{red}{$0$}};
             \node    at (0-.25,1) {$\textcolor{red}{+1}$};
               \node   at (1-.25,1)  {$\textcolor{red}{0}$};
             \node  at (-1-.25,2) {$\textcolor{red}{0}$};
            \node  at (0-.25,2){$\textcolor{red}{+1}$};
            \node at (1-.25,2)   {$\textcolor{red}{0}$};
            \node  at (-.5-.25,3) {$\textcolor{red}{-1}$};
            \node at (.5-.25,3) {$\textcolor{red}{-1}$};
            
            \draw (1)--(a);
              \draw (1)--(b);
              \draw (1)--(c);
              
                \draw (e)--(a)--(d);
              \draw (f)--(b)--(d);
              \draw (e)--(c)--(f);
              \draw (d)--(g);
              \draw (g)--(e);
               \draw (e)--(h);
                \draw (h)--(f);
         
         \node at (0,3.5) {$P$} ;
         
              \end{scope}
  
        \begin{scope}[shift={(6,0)},scale=2,rotate=180, xscale=-1]
        
           \node (1) at (0,0) {$\hat{1}$};
        \node  (a) at (0,1) {$ e$};
        \node  (b) at (-1,2)   {$\textcolor{blue}{c}$};
        \node  (c) at (0,2)  {$\textcolor{blue}{d}$};
        \node  (d) at (-1,3)  {$\textcolor{blue}{a}$};
         \node  (e) at (1,3)  {$\textcolor{blue}{b}$};
         \draw (1)--(a)--(b)--(d)--(c)--(a)--(e);

          \node at (0-.25,0) {\textcolor{red}{$0$}};
                \node    at (0-.25,1) {$\textcolor{red}{+1}$};
              \node  at (-1-.25,2) {$\textcolor{red}{0}$};
            \node  at (0-.25,2){$\textcolor{red}{0}$};
             \node  at (1-.25,3) {$\textcolor{red}{-1}$};
            \node at (-1-.25,3) {$\textcolor{red}{-1}$};

               \node at (0,3.5) {$Q$} ;

         \end{scope}
     \end{tikzpicture}
     \caption{Posets $P$ and $Q$ with the elements of $M_P$ and $M_Q$ colored blue.  The values next to an element $x$ given in red correspond to $\mu_{\widehat{P}}(x)$.}\label{fig:MP}
\end{figure}
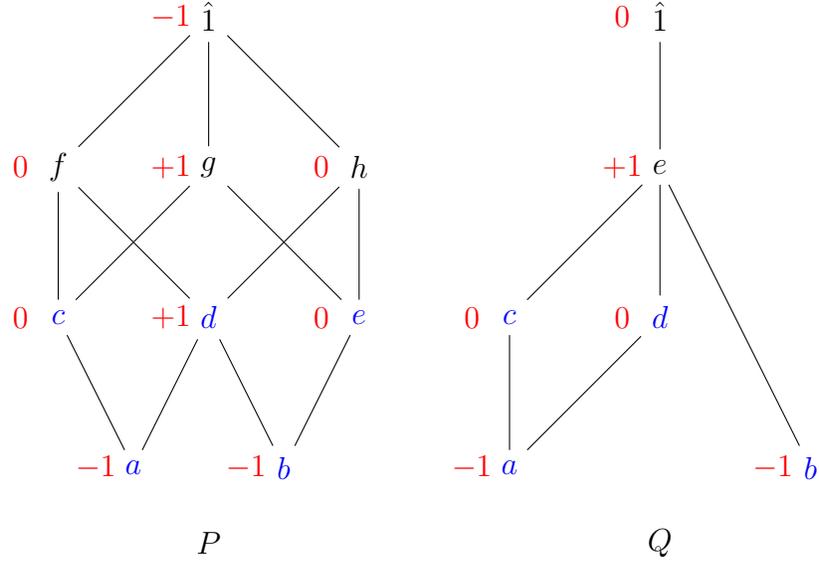

If $U_P$ is  Gorenstein, we must have that the sum of the M\"obius values of any connected downset contained in $M_P$ is $-1$.  Thus it is important for us to understand when this happens. As we show in the next lemma, this forces the connected downsets of $M_P$ to be trees.

\begin{lemma}\label{lem:mobOfTree}
If $C$ is a connected downset of $M_P$, then 
$$
\sum_{x\in C}\mu_{\widehat{P}}(x) = -1
$$
if and only if $C$ is a tree.
\end{lemma}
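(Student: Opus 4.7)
The approach is to compute the M\"obius values explicitly on $M_P$, rewrite the sum as a count of edges minus vertices in the Hasse diagram of $C$, and then invoke the standard graph-theoretic fact characterizing trees among connected graphs.

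First I would compute $\mu_{\widehat{P}}(x)$ for each $x \in M_P$ directly from the defining recursion in $\widehat{P}$. If $x$ is a minimal element of $P$, then the interval $[\hat{0},x]$ in $\widehat{P}$ is just $\{\hat{0},x\}$, giving $\mu_{\widehat{P}}(x) = -1$. If $x \in M_P$ is not minimal and covers $k = k(x) \geq 1$ minimal elements (and nothing else, by definition of $M_P$), then the interval $[\hat{0},x]$ in $\widehat{P}$ consists of $\hat{0}$, those $k$ minimal elements each with $\mu$-value $-1$, and $x$, so $\mu_{\widehat{P}}(x) = -(1-k) = k-1$.

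Next I would observe that the Hasse diagram of $M_P$ is bipartite, with one side consisting of the minimal elements of $P$ and the other side consisting of elements of $M_P$ that cover only minimal elements; the edges are exactly the cover relations between the two sides. For a connected downset $C \subseteq M_P$, write $C = D \sqcup U$ where $D$ is the set of minimal elements in $C$ and $U$ is the set of upper elements in $C$. Because $C$ is a downset, every minimal element covered by some $x \in U$ lies in $D$, so the number of edges $e(C)$ in the Hasse diagram of $C$ equals $\sum_{x \in U} k(x)$. Combining with the previous paragraph,
\[
\sum_{x \in C} \mu_{\widehat{P}}(x) \;=\; -|D| \;+\; \sum_{x \in U} \bigl(k(x) - 1\bigr) \;=\; e(C) - |D| - |U| \;=\; e(C) - |C|.
\]

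To finish, I would invoke the fact that a connected graph on $|C|$ vertices has $e(C) \geq |C| - 1$ edges, with equality if and only if it is a tree. Therefore $\sum_{x \in C} \mu_{\widehat{P}}(x) = -1$ exactly when $C$ is a tree, proving both directions of the lemma. The only real content is the correct identification of $\mu_{\widehat{P}}$ on $M_P$ and the observation that, for a downset of $M_P$, the covers inside $C$ really do account for all edges of the induced Hasse subgraph; neither step should present a serious obstacle.
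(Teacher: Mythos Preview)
Your argument is correct and follows essentially the same route as the paper: compute $\mu_{\widehat{P}}$ explicitly on $M_P$, rewrite the sum as (edges) $-$ (vertices) in the Hasse diagram of $C$, and finish with the tree characterization for connected graphs. The only cosmetic difference is that the paper writes $\deg_{M_P}(x)-1$ where you write $k(x)-1$; since $C$ is a downset these are the same count, and your remark that ``every minimal element covered by some $x\in U$ lies in $D$'' is exactly the justification the paper needs (and implicitly uses) to equate $\sum_{x\in C''}\deg_{M_P}(x)$ with the edge count of $C$.
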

\begin{proof}
 A direct calculation using the definition of the M\"obius function shows that for $x\in M_P$,
$$
\mu_{\widehat{P}}(x) = 
\begin{cases}
 -1 & \mbox{if $x$ is minimal in $P$}\\
 \deg_{M_P}(x)-1 & \mbox{otherwise}
\end{cases}
$$
where $\deg_{M_P}(x)$ refers to the degree of $x$ considered as a vertex in the Hasse diagram of $M_P$.  Let $C'$ be the minimal elements of $C$ and let $C''$ be the maximal elements of $C$.  Then
\begin{align*}
    \sum_{x\in C} \mu_{\widehat{P}}(x) &= \sum_{x\in C'} \mu_{\widehat{P}}(x)  +\sum_{x\in C''}\mu_{\widehat{P}}(x)  \\
    & = \sum_{x\in C'} (-1) + \sum_{x\in C''} ( \deg_{M_P}(x)-1 )\\
    & = -|C'| +  \sum_{x\in C''}  \deg_{M_P}(x)- |C''|\\
    & = -|C|  +  \sum_{x\in C''}  \deg_{M_P}(x)
\end{align*}
Now $C$ is bipartite with partite sets $C'$ and $C''$.  It follows that 
$$
\sum_{x\in C''}  \deg_{M_P}(x)
$$
is the number of edges in $C$.  Combining  this fact with the above equations shows that 
$$
    \sum_{x\in C} \mu_{\widehat{P}}(x)  =  |E(C)| - |V(C)|
$$
where $E(C)$ is the set of edges of $C$ and $V(C)$ is the set of vertices in $C$.   Since $C$ is connected, $ |E(C)| - |V(C)| =-1$ if and only if $C$ is tree and so the result holds.
 \end{proof}

  Lemma~\ref{lem:mobOfTree} along with 
  Theorem~\ref{thm:mobGorensteinLab}, imply that $M_P$ must be acyclic for $U_P$ to be Gorenstein.  However, this is not enough to guarantee that $U_P$ is Gorenstein.  For example, consider the poset $P$ in Figure~\ref{fig:MP}. The sum of the M\"obius values of $M_P$ is $-1$  and the same is true for $\downset{g}$.   However, if we consider the downset $R\cup \downset{g}$,  where $R=\{a,b,d\}$ is the path in $M_P$ connecting $a$ and $b$, we see that the sum of the M\"obius values is $0$. However, it needs to be $-1$ for $U_P$ to be Gorenstein. It turns out  the reason this arises is because while $a$ and $b$ are in the same tree of $M_P$,  they are not in the same tree of $M_P\cap  \downset{g}$.  This allows one to add in the vertices of  this ``missing" $a-b$ path in  $M_P\cap  \downset{g}$ and keep the downset connected. However, by adding this path, we change the sum of the M\"obius values. It turns out that this  issue is not unique to this example.  To describe what is happening, we need a definition.  In the next definition and elsewhere, we will need to count the number of connected components in a poset. As before, we will use the notation $cc(P)$ for the number of connected components of the Hasse diagram of $P$.

 \begin{definition}
Let $P$ be a poset with a $\hat{1}$ and let $C$ be a connected downset of $P$.  We say $C$ satisfies the \emph{connected component condition} (or simply the \emph{cc condition}) if the number of connected components of $M_P$ that have nonempty intersection with $C$ is equal to $cc(C\cap M_P)$. We say $P$ satisfies the cc condition  if all of its connected downsets satisfy it. 
 \end{definition}

The downset $\downset{g}$ in the poset $P$ of Figure~\ref{fig:MP} does not satisfy the cc condition. Indeed,  the number of connected components of $M_P$  that have nonempty intersection with $\downset{g}$ is 1 since $M_P$ is connected. However, $\downset{g}\cap M_P$ has two connected components, namely $\{a,c\}$ and $\{b,e\}$. On the other hand, one can check that all connected downsets in $Q$ of Figure~\ref{fig:MP}  satisfy the cc condition. And so $Q$ satisfies the cc condition.


\begin{lemma}\label{lem:NecessaryCondForGor}
Let $P$ be a poset with a $\hat{1}$.  If $U_P$ is Gorenstein, then  the following hold.
\begin{enumerate}
    \item[(1)] $M_P$ is acyclic.
     \item[(2)] $P$ satisfies the cc condition.
     \end{enumerate}
\end{lemma}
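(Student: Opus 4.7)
The plan is to exploit Theorem~\ref{thm:mobGorensteinLab}. Since $P$ has a $\hat 1$, the complement of any proper downset is an upset containing $\hat 1$, hence connected, so $\dim(A) = cc(A)$. Thus $U_P$ being Gorenstein is equivalent to $\sum_{x\in A} \mu_{\widehat P}(x) = -1$ for every connected proper downset $A$. Moreover, the computation in the proof of Lemma~\ref{lem:mobOfTree} is purely local in $M_P$, so for any downset $D$ of $M_P$ (not necessarily connected) one has $\sum_{x \in D} \mu_{\widehat P}(x) = |E(D)| - |V(D)|$; in particular, if every connected component of $D$ is a tree, this sum equals the negative of the number of components.

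For (1), I would first check that each connected component $K_i$ of $M_P$ is a connected downset of $P$: $M_P$ is itself a downset (elements covered by members of $M_P$ are minimal by definition of $M_P$), and cover relations beneath elements of $M_P$ stay within the same component. If $M_P$ has a cycle, then some $K_i$ fails to be a tree. If $K_i = P$ then $P = M_P$, which forces $P$ to be the ``bouquet'' poset consisting of minimals together with $\hat 1$ (any element strictly between a minimal and $\hat 1$ would yield a non-minimal covered by $\hat 1$, contradicting $\hat 1 \in M_P$), and a bouquet has no cycles. So $K_i$ must be a proper connected downset, whereupon Theorem~\ref{thm:mobGorensteinLab} gives $\sum_{x \in K_i} \mu_{\widehat P}(x) = -1$, and Lemma~\ref{lem:mobOfTree} forces $K_i$ to be a tree---a contradiction.

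For (2), suppose some connected proper downset $C$ violates the cc condition, with $k$ components of $M_P$ meeting $C$ but $k' > k$ components of $D := C \cap M_P$. The case $\hat 1 \in M_P$ can be dismissed immediately: then $P$ is a bouquet by the argument above, its connected proper downsets are just singletons of minimals, and such downsets trivially satisfy the cc condition. Otherwise $\hat 1 \notin M_P$. Let $K$ be the union of the $k$ components of $M_P$ that meet $C$; then $K$ is a downset of $P$, and $C' := C \cup K$ is a connected downset (each $K_i$ shares an element with the connected set $C$) with $\hat 1 \notin C \cup K$, so $C' \neq P$. Applying Theorem~\ref{thm:mobGorensteinLab} to both $C$ and $C'$ and subtracting gives $\sum_{x \in K \setminus D} \mu_{\widehat P}(x) = 0$. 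On the other hand, by (1) each component of $K$ is a tree, and each component of $D$ is a connected subgraph of some such tree and therefore also a tree, so the extended formula yields $\sum_{x \in K} \mu_{\widehat P}(x) = -k$ and $\sum_{x \in D} \mu_{\widehat P}(x) = -k'$, whence $\sum_{x \in K \setminus D} \mu_{\widehat P}(x) = k' - k > 0$, the desired contradiction. The main subtlety is ensuring that the enlarged set $C'$ is still a proper downset, which is precisely what the separate $\hat 1 \in M_P$ case analysis is there to handle.
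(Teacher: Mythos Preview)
Your proof is correct and follows essentially the same strategy as the paper: both parts use Theorem~\ref{thm:mobGorensteinLab} together with the computation in Lemma~\ref{lem:mobOfTree} to derive a contradiction from M\"obius sums over two connected proper downsets. The only difference is in part~(2), where the paper enlarges $C$ by a single path in $M_P$ joining two components of $C\cap M_P$, while you instead adjoin all of the relevant components of $M_P$ at once; either construction produces a second connected proper downset whose M\"obius sum cannot match that of $C$, and your explicit handling of the case $\hat 1\in M_P$ is a detail the paper leaves implicit.
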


\begin{proof}
First note that if  (1) did not hold, then there would be a connected downset containing a cycle. But then by Lemma~\ref{lem:mobOfTree}, the sum of the M\"obius values in this downset would not be $-1$, contradicting Theorem~\ref{thm:mobGorensteinLab}.

Now suppose that (2) did not hold and let $C$ be a connected downset that does not satisfy the cc condition.  Each connected component of $C\cap M_P$ must be contained in a connected component of $M_P$.  Thus, the number of components of $M_P$ that have nonempty intersection with $C$ is less than  $cc(C\cap M_P)$.  It follows that there is a path completely contained in $M_P$ that connects two connected components in $C\cap M_P$.  Let $C'$ be the downset generated by $C$ and this path.   Then $C'$ is a connected downset.  By construction, $C\setminus M_P =C'\setminus M_P$ and $cc(C\cap M_P) \neq cc(C'\cap M_P)$.  But then by Lemma~\ref{lem:mobOfTree}, $C$ and $C'$ have different sums of M\"obius values since each connected component of $C\cap M_P$ and $C'\cap M_P$ contributes $-1$ to the total sum.  This is impossible as $\dim(C) =1=\dim(C')$ and $U_P$ is Gorenstein.
\end{proof}

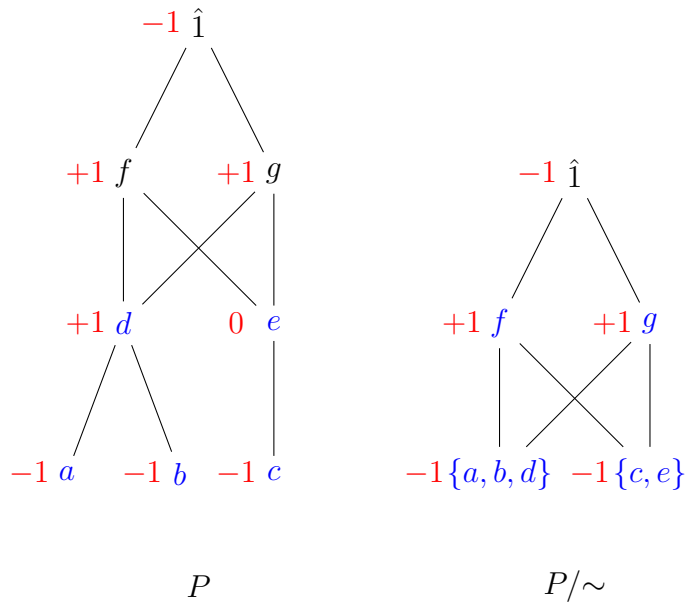
\begin{figure}
    \begin{tikzpicture}
     \node  (a) at (-1.75,0)  {$\textcolor{blue}{a}$};
          \node  (a') at (-.25,0)  {$\textcolor{blue}{b}$};

     \node (b) at (1,0)  {$\textcolor{blue}{c}$};
      \node  (c) at (-1,2)  {$\textcolor{blue}{d}$};
       \node (d) at (1,2)  {$\textcolor{blue}{e}$};
       \node  (e) at (-1,4)  {$f$};
       \node (f) at (1,4)  {$g$};
    \node (1) at (0,6)  {$\hat{1}$};
    
    \draw (a')--(c);
    \draw (a)--(c)--(e)--(1)--(f)--(d)--(b);
    \draw (c)--(f);
    \draw  (d)--(e);
    
         \node   at (-1.75-.5,0)  {$\textcolor{red}{-1}$};
          \node   at (-.25-.5,0){$\textcolor{red}{-1}$};
          
          \node   at (1-.5,0)  {$\textcolor{red}{-1}$};

        \node   at (-1-.5,2)  {$\textcolor{red}{+1}$};
          \node   at (1-.5,2)  {$\textcolor{red}{0}$};
      \node   at (-1-.5,4)  {$\textcolor{red}{+1}$};
          \node   at (1-.5,4)  {$\textcolor{red}{+1}$};
            \node   at (0-.5,6)   {$\textcolor{red}{-1}$};

             \node at (0,-1.5) {$P$} ;
    \begin{scope}[shift ={(5,-2)}]

       \node  (c) at (-1,2)  {$\textcolor{blue}{\{a,b,d\}}$};
       \node (d) at (1,2)  {$\textcolor{blue}{\{c,e\}}$};
       \node  (e) at (-1,4)  {$\textcolor{blue}{f}$};
       \node (f) at (1,4)  {$\textcolor{blue}{g}$};
    \node (1) at (0,6)  {$\hat{1}$};

    \draw (c)--(e)--(1)--(f)--(d);
    \draw (c)--(f);
    \draw  (d)--(e);

        \node   at (-1-1,2)  {$\textcolor{red}{-1}$};
          \node   at (1-.8,2)  {$\textcolor{red}{-1}$};
      \node   at (-1-.5,4)  {$\textcolor{red}{+1}$};
          \node   at (1-.5,4)  {$\textcolor{red}{+1}$};
            \node   at (0-.5,6)   {$\textcolor{red}{-1}$};

             \node at (0,.5) {$P/{\sim}$} ;
    \end{scope}

    \end{tikzpicture}
     \caption{Posets $P$ and $P/{\sim}$ with the elements of $M_P$ and $M_{P/{\sim}}$ colored blue.  The values next to an element $x$ given in red correspond to $\mu_{\widehat{P}}(x)$.}\label{fig:NecessCondNotSuffExample}
\end{figure}

While the two conditions in Lemma~\ref{lem:NecessaryCondForGor} are necessary for $U_P$ to be Gorenstein,  they are not sufficient.  To see why, consider the poset $P$ in Figure~\ref{fig:NecessCondNotSuffExample}.   Both conditions of  Lemma~\ref{lem:NecessaryCondForGor}  hold. However, $U_P$ is not Gorenstein since the sum of the M\"obius values of the downset $\{a,b,c,d,e,f,g\}$ is 0.   Even though the two conditions in Lemma~\ref{lem:NecessaryCondForGor} are not sufficient, we will see that when they hold, we can modify $P$ in a way that reduces the number of elements of $P$ and does not change the Gorensteinness of the  corresponding variety.  Before we explain this in detail, let us return to $P$ given in Figure~\ref{fig:NecessCondNotSuffExample}.  As noted before, $M_P$ is acyclic.  Consider a new poset obtained by collapsing the elements of each tree of $M_P$ to a single element.  That is, collapse the elements, $a,b$ and $d$ to a single element and $c$ and $e$ to a single element. This new poset we get is labeled as $P/{\sim}$ in  Figure~\ref{fig:NecessCondNotSuffExample}.    Note that $M_{P/{\sim}}$ is not acyclic and so by Lemma~\ref{lem:NecessaryCondForGor},   $U_{P/{\sim}}$ is not Gorenstein.  As we will see, the fact that $U_{P/{\sim}}$ is not Gorenstein can be used to explain why $U_P$ is not Gorenstein.

We now turn our attention to  providing a characterization and an algorithm to determine when a  poset with a $\hat{0}$ or $\hat{1}$ has variety that is Gorenstein. This will formalize the ideas we discussed in the previous paragraph.  We have already discussed one of the main tools in this characterization, the M\"obius function.  The other main tool is quotient posets, which we now examine.

\begin{definition}\label{def:quoPoset}
Let $P$ be a poset and let $\sim$ be an equivalence relation on $P$. The \emph{quotient}, $P/{\sim}$, is the set of equivalence classes with the binary
relation $\leq$  given by $X\leq Y$ in $P/{\sim}$ if and only if $x\leq y$  in $P$ for some $x\in X$ and $y\in Y$.  If $P/{\sim}$ is a poset, we call $P/{\sim}$ the \emph{quotient poset}.
\end{definition}

\begin{remark}
Whenever we are using quotient posets, we will use uppercase letters for elements of the quotient poset and lowercase letters for the original poset.   We will use $\leq$ for both orders, despite the fact that they are not the same. 
\end{remark}

In general, the quotient  described in Definition~\ref{def:quoPoset}  need not be a poset.  For example, take a chain of length 2 and identify the bottom and top elements.  The corresponding $\leq$ relation defined for the quotient is reflexive and is  transitive, but it is not antisymmetric.   However, the relation that we  use here  will  produce a poset.

\begin{definition}
Let $P$ be a poset such that $M_P$ is acyclic.  The \emph{tree relation} $\sim$ is the relation such that  $x\sim y$ if and only if $x=y$ or $x$ and $y$ are in the same connected component of $M_P$.  
\end{definition}
See Figure~\ref{fig:NecessCondNotSuffExample} for an example of  a poset $P$ and the quotient poset $P/
{\sim}$ where $\sim$ is the tree relation.  We will now show the tree relation produces a quotient that is indeed a poset.   Note that we assume that $M_P$ is acyclic. Although this is not necessary for the quotient to be a poset, whenever we apply the relation, it will always be acyclic.  This also explains the use of the word ``tree" in ``tree relation".

\begin{lemma}
Let $P$ be a poset such that $M_P$ is acyclic. Let $\sim$ be the tree relation.  Then
$P/{\sim}$ is a poset. 
 
 \end{lemma}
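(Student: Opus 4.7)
The plan is to verify reflexivity, antisymmetry, and transitivity of the relation $\leq$ on $P/{\sim}$. Reflexivity is immediate. The proofs of the other two properties both rest on the following observation: if $y \in M_P$ and $x \leq y$ in $P$, then $x \in M_P$ and $x \sim y$. To see this, take a saturated chain $x = z_0 \lessdot z_1 \lessdot \cdots \lessdot z_k = y$ in $P$; since $y \in M_P$ and $z_{k-1} \lessdot y$, the element $z_{k-1}$ must be minimal in $P$, which forces $k \leq 1$. Hence either $x = y$ or $x$ is a minimal element of $P$ with $x \lessdot y$, and in either case $x$ and $y$ lie in the same connected component of $M_P$.

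For transitivity, suppose $X \leq Y$ and $Y \leq Z$ with witnesses $x \leq y_1$ and $y_2 \leq z$ where $y_1, y_2 \in Y$, $x \in X$, and $z \in Z$. If $y_1 = y_2$, then $x \leq z$ in $P$ by transitivity, so $X \leq Z$. Otherwise $y_1 \neq y_2$ forces $Y$ to be a non-singleton equivalence class, so $Y$ is a nontrivial connected component of $M_P$ and in particular $y_1 \in M_P$. The key observation then gives $x \sim y_1$, whence $x \in Y$ and so $X = Y$; combining this with $Y \leq Z$ yields $X \leq Z$.

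For antisymmetry, suppose $X \leq Y$ and $Y \leq X$ with $X \neq Y$, witnessed by $x_1 \leq y_1$ and $y_2 \leq x_2$. If $y_1 \in M_P$, the observation applied to $x_1 \leq y_1$ gives $x_1 \sim y_1$, forcing $X = Y$, a contradiction. If $y_1 \notin M_P$, then $Y = \{y_1\}$ is a singleton and $y_2 = y_1$, so $x_1 \leq y_1 \leq x_2$ in $P$. When $X$ is a singleton, $x_1 = x_2$ forces $x_1 = y_1$, contradicting $X \neq Y$; when $X$ is a nontrivial component of $M_P$, then $x_2 \in M_P$, and applying the observation to $y_1 \leq x_2$ yields $y_1 \in M_P$, contradicting the case assumption.

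The main obstacle is simply to crystallize the observation about chains ending in $M_P$, which pins down the shape of the principal order ideals generated by elements of $M_P$. With that in hand, each axiom reduces to a brief case split on whether a representative of the relevant equivalence class lies in $M_P$ or not; the hypothesis that $M_P$ is acyclic is not needed here, but it is what guarantees that the equivalence classes are trees, justifying the name of the relation.
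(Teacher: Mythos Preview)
Your proof is correct and follows essentially the same approach as the paper: both arguments hinge on the fact that if $y\in M_P$ and $x\le y$ then $x\sim y$, which you isolate explicitly while the paper invokes it inline during the antisymmetry and transitivity checks. Your remark that acyclicity of $M_P$ is not actually used is accurate and a nice sharpening of the statement.
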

\begin{proof}
First note that it is trivial to verify that $ \leq$ is reflexive.  

To show that $\leq$ is antisymmetric, suppose on the contrary that there exists  $X,Y \in P/{\sim}$ such that  $X\leq Y$ and $Y\leq X$, but $X\neq Y$.     Since $X\leq Y$, there is a $x\in X$ and a $y\in Y$ such that $x\leq y$.  Since $X\neq Y$, we know that $x$ and $y$ are not in the same equivalence class.   We claim that this means that $y\notin M_P$.  Indeed, if $y\in M_P$, then $x\leq y$ would imply that $x\in M_P$.  However, if $x,y\in M_P$ and $x\leq y$, then $x$ and $y$ are in the same tree of $M_P$ and this would imply that $X=Y$.    Now since $Y\leq X$,  we have that there is an element in $Y$ below an element of $X$.  Since $y\notin M_P$, $Y$ only has one element and this element must be $y$. So there exists a $x'\in X$ such that $y\leq x'$.  The fact that $y\notin M_P$ and $y\leq x'$ implies that $x'\notin M_P$.  But then this means that $X$ must contain a single element and so $x=x'$. This together with $x\leq y\leq x'$ implies that $x=y$ and so $X=Y$, a contradiction.

Finally, we show that $\leq$ is transitive.  Suppose that $X\leq Y$ and $Y\leq Z$.  If $X=Y$ or $Y=Z$, we immediately get that $X\leq Z$. So we may assume that $X<Y$ and $Y<Z$.  Since $X<Y$, there is a $x\in X$, $y\in Y$ such that $x<y$.  Using the same argument from the previous paragraph, we can see that $y\notin M_P$.  This implies that $Y=\{y\}$. So $Y<Z$ implies that there is a $z\in Z$ such that $y<z$.  So we have $x<y<z$, implying that $x<z$. Thus $X<Z$ and so $\leq$ is transitive.
\end{proof}

By Theorem~\ref{thm:mobGorensteinLab} whether or not a poset with a $\hat{1}$ corresponds to a Gorenstein variety depends on the M\"obius function.  Because of this, it is prudent to understand how the M\"obius function behaves when using the tree relation.  If we return to our example poset  $P$ in Figure~\ref{fig:NecessCondNotSuffExample}, we see that when we took the quotient, none of the M\"obius values of the elements outside of $M_P$ changed.  This is because  when the trees collapsed to a single element, the M\"obius value (in the quotient) of the single element is $-1$ which is the same as the sum of the M\"obius values in the tree before it was collapsed.   As we see in the next lemma,  this  property is not a unique to our example.

\begin{lemma}\label{lem:MobInQuot}
Let $P$ be a poset with a $\hat{1}$.  Suppose that 
\begin{enumerate}
     \item[(1)] $M_P$ is acyclic.
     \item[(2)] $P$ satisfies the cc condition.
\end{enumerate}
 Let $\sim$ be the tree relation on $P$.  Then for all $x\in P\setminus M_P$
$$
\mu_{\widehat{P}}(x) = \mu_{\widehat{P/{\sim}}}(X)
$$
where  $X$ is the equivalence class containing $x$.
\end{lemma}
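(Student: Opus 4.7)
The plan is to proceed by induction on $h(x)$, the length of the longest chain from a minimal element of $P$ up to $x$. Since $x \in P \setminus M_P$, the element $x$ is non-minimal and must cover at least one non-minimal element, forcing $h(x) \geq 2$; moreover, the equivalence class of $x$ under the tree relation is the singleton $\{x\}$, so throughout the argument $X = \{x\}$. I will expand both $\mu_{\widehat{P}}(x)$ and $\mu_{\widehat{P/\sim}}(X)$ using the recursive formula $\mu(z) = -1 - \sum_{w < z} \mu(w)$ valid for any $z \neq \hat{0}$, and then match the two expressions.

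The key computation is to evaluate $\sum_{y \in M_P \cap \downset{x}} \mu_{\widehat{P}}(y)$. I would reuse the explicit values derived in the proof of Lemma~\ref{lem:mobOfTree}: $\mu_{\widehat{P}}(y) = -1$ when $y$ is minimal, and $\mu_{\widehat{P}}(y) = \deg_{M_P}(y) - 1$ otherwise. For any non-minimal $y \in M_P \cap \downset{x}$, its $M_P$-neighbors are minimal elements of $P$ and hence automatically lie in $\downset{x}$, so inside each piece $C = T \cap \downset{x}$ the degree of $y$ agrees with $\deg_{M_P}(y)$. Rerunning the $|E(C)| - |V(C)| = -1$ calculation from Lemma~\ref{lem:mobOfTree} then shows each such connected piece contributes $-1$. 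The cc condition is precisely what guarantees that every tree $T$ of $M_P$ meets the connected downset $\downset{x}$ in at most one connected subset, so
\[\sum_{y \in M_P \cap \downset{x}} \mu_{\widehat{P}}(y) = -n_T(x),\]
where $n_T(x)$ counts the trees of $M_P$ that intersect $\downset{x}$.

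On the quotient side, I would first observe that every tree of $M_P$ contains a minimal element of $P$ (any non-minimal element of $M_P$ is adjacent in $M_P$ to some minimal element), so every tree-equivalence-class becomes a minimal element of $P/\sim$, giving $\mu_{\widehat{P/\sim}}(Y) = -1$ for each such $Y$. The classes strictly below $X = \{x\}$ in $P/\sim$ are then exactly the tree-classes meeting $\downset{x}$ together with the singletons $\{y\}$ for $y \in (P \setminus M_P) \cap \downset{x}$ with $y < x$; since $h(y) < h(x)$, the inductive hypothesis gives $\mu_{\widehat{P/\sim}}(\{y\}) = \mu_{\widehat{P}}(y)$ for the latter. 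Substituting yields
\[\mu_{\widehat{P/\sim}}(X) = -1 + n_T(x) - \sum_{\substack{y < x \\ y \in P \setminus M_P}} \mu_{\widehat{P}}(y),\]
which matches the recursive expansion of $\mu_{\widehat{P}}(x)$ after using the $-n_T(x)$ evaluation of the $M_P$-sum.

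The main obstacle I anticipate is the careful bookkeeping of the order on $P/\sim$: I need to verify that the classes $Y \leq X$ in the quotient are exactly those described and that no spurious relations are introduced by quotienting. This reduces to the observation that $\sim$ only identifies elements of $M_P$, so a tree-class $T$ lies below $X = \{x\}$ in $P/\sim$ precisely when some element of $T$ lies below $x$ in $P$, i.e.\ when $T \cap \downset{x} \neq \emptyset$. Once that identification is established, the two recursive expressions line up term-by-term and the induction closes.
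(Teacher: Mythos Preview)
Your proof is correct and follows essentially the same approach as the paper's: both split the recursion for $\mu_{\widehat{P}}(x)$ into the $M_P$-part (handled via Lemma~\ref{lem:mobOfTree} and the cc condition applied to $\downset{x}$) and the $(P\setminus M_P)$-part (handled by induction), then match against the recursion for $\mu_{\widehat{P/\sim}}(X)$. Your induction on $h(x)$ is equivalent to the paper's induction on the number of elements of $P\setminus M_P$ strictly below $x$, since $M_P=\{y:h(y)\le 1\}$; the only small looseness is your justification that tree-classes are minimal in $P/\sim$ (containing a minimal element of $P$ is not by itself the reason---one should observe that anything strictly below an element of $M_P$ is minimal, hence in the same tree), but this is routine to complete.
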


\begin{proof}
We induct on the number of elements strictly below $x$ not in $M_P$.  Suppose that this number is 0.  In other words, $x$ is only above elements of $M_P$.  Let $T_1,T_2,\dots, T_k$ be the connected components of $\downset{x} \cap M_P$.  By Lemma~\ref{lem:mobOfTree} the sum of M\"obius values of a tree is $-1$ and so
$$
\mu_{\widehat{P}}(x) = k-1.
$$
 
 Since $x\notin M_P$, we know that $x$ is in its own equivalence class, $X$.   In $P/{\sim}$,   each tree $T_i$ has been collapsed to a single element and these elements are minimal in $P/{\sim}$.      Moreover because of the assumption (2) applied to the connected downset $\downset{x}$ and the definition of a quotient poset, $\downset{X}\setminus X = \{T_1,T_2,\dots, T_k\}$. Since the $T_i$'s are minimal in $P/{\sim}$, we have that
$$
\mu_{\widehat{P/{\sim}}}(T_i) =-1.
$$ 
and so 
$$
\mu_{\widehat{P/{\sim}}}(X)=k-1.
$$
Thus, in this case, $\mu_{\widehat{P}}(x) = \mu_{\widehat{P/{\sim}}}(X)$   and so the base case holds.\\

Now suppose that the number of elements below $x$ outside of $M_P$ is nonzero. As before, let $X=\{x\}$.  By the inductive hypothesis, if $Y<X$ and $Y$ contains no elements of $M_P$, then 
$$
 \mu_{\widehat{P}}(y) =\mu_{\widehat{P/{\sim}}}(Y)
$$
where $Y=\{y\}$. Moreover, by the definition of the quotient poset, if $Y = \{y\}$ then $
Y<X$ if and only if $y<x$. Using these facts, the definition of the M\"obius function, and condition (2), we see that
$$
 \mu_{\widehat{P}}(x) = \mu_{\widehat{P/{\sim}}}(X)
$$
and so the result holds by induction.
\end{proof}

By Lemma~\ref{lem:mobOfTree}, we know that when $T$ is a tree, the sum of the M\"obius values is $-1$. This agrees with  the M\"obius value of the tree (as a single element) in the $P/{\sim}$.  In other words, the sum of the M\"obius values in that equivalence class is the M\"obius value of the equivalence class in the quotient. Every other equivalence class only contains a single element. By Lemma~\ref{lem:MobInQuot}, under the conditions given in that lemma, the same summation property holds for these other elements in the quotient. Thus, we have the following.

\begin{corollary}\label{cor:sumOfMobVal}
Let $P$ be a poset with a $\hat{1}$.  Suppose that 
\begin{enumerate}
   \item[(1)] $M_P$ is acyclic.
     \item[(2)] $P$ satisfies the cc condition.
\end{enumerate}
 Let $\sim$ be the tree relation on $P$.  Then for all $X\in P/{\sim}$,
$$
\mu_{\widehat{P/{\sim}}}(X) = \sum_{x\in X} \mu_{\widehat{P}}(x).
$$
 \end{corollary}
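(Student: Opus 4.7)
The plan is to verify the identity by a case split on whether the equivalence class $X$ comes from a collapsed tree of $M_P$ or is a singleton containing an element outside $M_P$, since the definition of the tree relation leaves these as the only two possibilities.

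First I would dispatch the easier case $X = \{x\}$ with $x \in P \setminus M_P$. Here the right-hand sum reduces to the single term $\mu_{\widehat{P}}(x)$, and Lemma~\ref{lem:MobInQuot} gives exactly $\mu_{\widehat{P}}(x) = \mu_{\widehat{P/{\sim}}}(X)$, so nothing further is needed.

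Next I would handle the case where $X$ consists of a full connected component $T$ of $M_P$. The claim is that both sides equal $-1$. For the right-hand side, hypothesis~(1) makes $T$ a tree, so $\sum_{x \in T} \mu_{\widehat{P}}(x) = -1$ follows immediately from Lemma~\ref{lem:mobOfTree} applied to $T$ viewed as a connected downset of $M_P$. For the left-hand side, I would argue that the class $T$ is a minimal element of $P/{\sim}$: any $y \in T$ lies in $M_P$, so every element strictly below $y$ in $P$ is a minimal element of $P$, hence also belongs to $M_P$ and to the same connected component $T$. Therefore nothing of $P/{\sim}$ lies strictly below $X$, and the definition of the M\"obius function gives $\mu_{\widehat{P/{\sim}}}(X) = -1$.

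I do not expect any real obstacle here; the corollary is essentially bookkeeping on top of Lemma~\ref{lem:MobInQuot} and Lemma~\ref{lem:mobOfTree}. The one point requiring a careful sentence is the minimality of a collapsed tree in $P/{\sim}$, which is where the definition of $M_P$ (elements that are minimal or cover only minimals) gets used. Combining the two cases then yields the desired identity for every $X \in P/{\sim}$.
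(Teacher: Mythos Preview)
Your proposal is correct and follows essentially the same argument as the paper: split on whether $X$ is a collapsed tree of $M_P$ (both sides equal $-1$ by Lemma~\ref{lem:mobOfTree} and minimality in the quotient) or a singleton outside $M_P$ (Lemma~\ref{lem:MobInQuot} applies directly). Your justification for the minimality of a collapsed tree in $P/{\sim}$ is a bit more explicit than the paper's, but otherwise the two proofs coincide.
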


Now that we have an understanding of how the M\"obius function behaves when taking quotients, we move to understanding how connected downsets behave. As we will see in the following technical lemma, there is a mapping between connected downsets of $P$ and $P/{\sim}$ that preserves the sum of the M\"obius values.

\begin{lemma}\label{lem:connectedDownsets}
Let $P$ be a poset with a $\hat{1}$.  Suppose that 
\begin{enumerate}
      \item[(1)] $M_P$ is acyclic.
     \item[(2)] $P$ satisfies the cc condition.
\end{enumerate}
Let $\sim$ be the tree relation on $P$  and let $C(P)$ and $C(P/{\sim})$ be the set of connected downsets of $P$ and $P/{\sim}$.  Define a map $\psi: C(P) \rightarrow C(P/{\sim})$ by $\psi(A) = \{X \in P/{\sim} \mid X\cap A \neq \emptyset\}$.   Then the following hold.
\begin{enumerate}
    \item[(a)] $\psi$ is a well-defined surjection.
    \item[(b)] If $A,B  \in \psi^{-1}(D)$, then  
    $$
    \displaystyle \sum_{x\in A} \mu_{\widehat{P}}(x) = \displaystyle \sum_{x\in B} \mu_{\widehat{P}}(x).
    $$
    \item[(c)] For all $A\in C(P)$, 
    $$\displaystyle  \sum_{x\in A} \mu_{\widehat{P}}(x) =  \sum_{X\in \psi(A)}\mu_{\widehat{P/{\sim}}}(X).
    $$
\end{enumerate}
\end{lemma}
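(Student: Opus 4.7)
The plan is to handle the three parts in order, with the decomposition
\[\sum_{x \in A} \mu_{\widehat{P}}(x) = \sum_{X \in \psi(A)} \sum_{x \in X \cap A} \mu_{\widehat{P}}(x)\]
doing the bulk of the work for parts (b) and (c). The key structural observation underlying the whole proof is that any equivalence class $X$ meeting $M_P$ behaves like a minimal element of $P/{\sim}$: if $x' \in X \cap M_P$ and $y \leq x'$ in $P$, then either $y = x'$ (when $x'$ is minimal) or $y$ is a minimal element of $P$ covered by $x'$ and hence in the same connected component of $M_P$ as $x'$. In either case $y \in X$, so $Y \leq X$ in $P/{\sim}$ forces $Y = X$ whenever $X \cap M_P \neq \varnothing$.

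For part (a), this observation makes $\psi(A)$ a downset of $P/{\sim}$: the only non-trivial case is $X = \{x\} \in \psi(A)$ with $x \notin M_P$, and then any $Z \leq X$ is witnessed by some $z \in Z$ with $z \leq x$ in $P$, so $z \in A$ because $A$ is a downset. For connectedness of $\psi(A)$, I will translate a path $x_0, x_1, \dots, x_k$ in the Hasse diagram of $A$ into a sequence of classes $X_0, X_1, \dots, X_k$ in $\psi(A)$ whose consecutive terms are comparable in $P/{\sim}$; since $\psi(A)$ is a downset of $P/{\sim}$, any saturated chain joining two comparable elements of $\psi(A)$ stays inside $\psi(A)$, giving a walk in the Hasse diagram of $\psi(A)$. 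For surjectivity, given a connected downset $D$ of $P/{\sim}$, I take $A = \bigcup_{X \in D} X$; because $D$ is a downset, $A$ is a downset of $P$, each $X \subseteq A$ is connected as a subgraph of the Hasse diagram of $P$, and each cover $X_{i-1} \lessdot X_i$ in $P/{\sim}$ realized by some $x \leq y$ (with $x \in X_{i-1}$, $y \in X_i$) admits a saturated chain in $P$ whose intermediate elements must lie in $X_{i-1} \cup X_i \subseteq A$, giving connectedness of $A$.

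For part (b) I apply the displayed decomposition. When $X = \{x\}$ is a singleton class in $\psi(A)$, the inner sum is just $\mu_{\widehat{P}}(x)$, which depends only on $X$ and not on the specific choice of $A \in \psi^{-1}(D)$. When $X$ is a connected component of $M_P$ with more than one element, the cc condition together with $X \cap A$ being a downset of $M_P$ forces $X \cap A$ to be a nonempty connected downset of the forest $M_P$, hence a tree, so Lemma~\ref{lem:mobOfTree} gives $\sum_{x \in X \cap A} \mu_{\widehat{P}}(x) = -1$ regardless of which elements of $X$ occur in $A$. Summing yields (b). For (c) I still need to identify each inner sum with $\mu_{\widehat{P/{\sim}}}(X)$: for singleton $X = \{x\}$ with $x \notin M_P$ this is Lemma~\ref{lem:MobInQuot}, for singleton $X$ corresponding to an isolated vertex of $M_P$ both sides equal $-1$, and for the remaining classes both sides equal $-1$ by combining Lemma~\ref{lem:mobOfTree} (applied to the tree $X$ and to $X \cap A$) with Corollary~\ref{cor:sumOfMobVal}.

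The main obstacle I anticipate is the connectedness claim in (a), since cover relations in $P/{\sim}$ need not correspond to cover relations in $P$ and the induced Hasse diagram on $\psi(A)$ can look quite different from the image of the Hasse diagram of $A$. The clean workaround is to exploit that $\psi(A)$ is already a downset of $P/{\sim}$, so that comparability in $P/{\sim}$ can be upgraded to a path via a saturated chain whose intermediate elements lie automatically in $\psi(A)$; the same downset trick is what makes the preimage $A = \bigcup_{X \in D} X$ connected in the surjectivity step.
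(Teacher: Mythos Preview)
Your argument is correct and uses the same essential ingredients as the paper (Lemma~\ref{lem:mobOfTree}, the cc condition, Lemma~\ref{lem:MobInQuot}, Corollary~\ref{cor:sumOfMobVal}), but the organization differs in two places worth noting. First, in (a) you establish that $\psi(A)$ is a downset \emph{before} proving connectedness, which lets you upgrade ``$X_i$ and $X_{i+1}$ are comparable in $P/{\sim}$'' to an actual path via a saturated chain lying inside the downset $\psi(A)$; the paper argues connectedness first and simply asserts that the sequence of classes forms a path, which is slightly less careful since a cover $w_i \lessdot w_{i+1}$ in $P$ need not descend to a cover in $P/{\sim}$. Second, for (b) and (c) you compute the contribution of each class $X\in\psi(A)$ separately (singletons give $\mu_{\widehat P}(x)$, tree classes give $-1$ via the cc condition and Lemma~\ref{lem:mobOfTree}) and match these term-by-term with $\mu_{\widehat{P/{\sim}}}(X)$; the paper instead first shows $A$ and $B$ agree outside $M_P$ and, for (c), invokes (b) to pass to the canonical preimage $A=\bigcup_{X\in\psi(A)}X$ before applying Corollary~\ref{cor:sumOfMobVal} globally. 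Your class-by-class bookkeeping is a bit more uniform and avoids the case split on whether $A\subseteq M_P$, while the paper's reduction to the full-union preimage makes the final step in (c) a one-liner; both routes are short and rely on the same counting identity hidden in the cc condition, namely that each tree $X$ meeting $A$ satisfies $cc(X\cap A)=1$.
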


\begin{proof}
First, let us prove (a).  We start by  showing $\psi$ is well-defined.  Suppose that $A$ is a connected downset of $P$.   Let $X,Y \in \psi(A)$ and let $x\in X\cap A$, $y\in Y\cap A$.  Since $x,y\in A$ and $A$ is connected, there exists an $x-y$ path in $A$, say $w_0w_1w_2\dots w_k w_{k+1}$ where $x=w_0$ and $y=w_{k+1}$.  Let $W_i$ be the equivalence class containing $w_i$. Since $w_i\in A$, $W_i\in \psi(A)$.  Since $w_i$ is comparable to $w_{i+1}$ in $P$, $W_i$ and $W_{i+1}$ are comparable or equal in $P/{\sim}$.  In any case, after removing repeated vertices, we see that $XW_1W_2\dots W_kY$ is  a path  in $P/{\sim}$ whose vertices are in $\psi(A)$. It follows that $\psi(A)$ is connected.

Next, let us show that $\psi(A)$ is a downset.  Suppose that $Y\in \psi(A)$ and let $X<Y$ in $P/{\sim}$.  Then since $X\lneq Y$, it must be the case that $Y$ is not minimal in $P/{\sim}$.  Then by definition of the tree relation, $Y=\{y\}$ for some $y\in P$.  Since $Y\in \psi(A)$ and $Y$ only has one element, this element must be in $A$.  That is, $y\in A$. Now $X<Y$ implies that there is  a $x\in X$ below an element of $Y$. Since $Y$ only has one element, we have  that $x<y$.  Since $A$ is a downset, $x\in A$.  Thus, $X\in \psi(A)$ and so $\psi(A)$ is a downset. We conclude that $\psi$ is well-defined.

Let us now show that $\psi$ is a surjection.  Let $D$ be a connected downset of $P/{\sim}$.  Let $A = \{a\in P \mid a\in Z \mbox{ for some } Z\in D\}$.  In other words, $A$ is the union of the equivalence classes contained in $D$.  Clearly, if $A\in C(P)$, then $\psi(A)=D$, so it suffices to show $A$ is connected and a downset.  First, we show $A$ is connected.  Let $x,y\in A$ and let $X,Y$ be the corresponding equivalence classes containing $x$ and $y$.  Then  we have that $X,Y\in D$.  Since $D$ is connected, there exists a path $W_0W_1W_2\dots W_kW_{k+1}$ in $D$ where $X=W_0$ and $Y= W_{k+1}$.   For each $i$, if $W_i$ and $W_{i+1}$ are both not minimal in $P/{\sim}$, then $W_i =\{w_i\}$ and $W_{i+1} = \{w_{i+1}\}$ for some elements $w_i$ and $w_{i+1}$. The fact that $W_i$ and $W_{i+1}$ are comparable in $P/{\sim}$ implies that $w_i$ and $w_{i+1}$ are comparable in $P$. Thus there is a path from $w_i$ to $w_{i+1}$. If $W_j$ is minimal in $P/{\sim}$, then $W_j$ corresponds to a tree in $M_P$.  Moreover, the fact that $W_{j-1}W_jW_{j+1}$ is a path in $D$ implies that $W_{j-1}$ and $W_{j+1}$ are not minimal.  It follows that $W_{j-1} = \{w_{j-1}\}$ and $W_{j+1} = \{w_{j+1}\}$ for some elements $w_{j-1}$ and $w_{j+1}$.  Since $W_{j-1}$ and $W_{j}$ are comparable in $P/{\sim}$, we know that there is an element $w\in W_j$ comparable to $w_{j-1}$.  Similarly, there is an element $w'\in W_j$ comparable to $w_{j+1}$.   Now all the elements of $W_j$ are in $A$ and since $W_j$ is  a tree and thus connected, there is a path from $w$ to $w'$ in $A$.  Thus, there is a path from $w_{j-1}$ to $w_{j+1}$ in $A$. By applying these ideas to the elements in  $W_0W_1W_2\dots W_kW_{k+1}$, we see that we can find a path from $x$ to $y$ in $A$.  Thus $A$ is connected.

Next, we show that $A$ is a downset.  Let $y\in A$ and let $x<y$ with $x\in X, y \in Y$.    Then $Y\in D$. Since $x<y$ in $P$, $X\leq Y$ in $P/{\sim}$.  Since $D$ is a downset this implies that $X\in D$.  It follows that $x\in A$ and so $A$ is a downset.  Thus  $\psi$ is a surjection and so (a) holds.

Now we prove (b).  Let $A,B \in \psi^{-1}(D)$. We claim that $A$ and $B$ can only differ in values of $M_P$.  To see why, suppose that there is an $x\notin M_P$ such that $x\in A$.    Let $X$ be the equivalence class containing $x$.  Then $X\in \psi(A)$ and so $\psi(A) =\psi(B)$, implies that  $X\in \psi(B)$.  Since  $x\notin M_P$, $X=\{x\}$ and so $x$ must be in $B$.


We now break into two cases depending on if $A\subseteq M_P$ or not.  If $A\subseteq M_P$, then since $A$ is connected, $A$ is a tree completely contained in $M_P$ and so by Lemma~\ref{lem:mobOfTree} the sum of the M\"obius values of $A$ is $-1$. Since the $A$ and $B$ agree except possibly for elements of $M_P$ and since $B$ is connected,  the sum of the M\"obius values of $B$ is $-1$ as well.  Thus (b) holds in this case.

Now suppose that $A$ is not contained in $M_P$ and suppose that there are $k$ connected components of $M_P$ that have nontrivial intersection with  $A$. Since $A$ is connected, for each such connected component, there is an element of $A\setminus M_P$ above some element of the component.  Since $A$ and $B$ agree except possibly for elements of $M_P$, we see that there must also be $k$ connected components of $M_P$ that intersect nontrivially with $B$.  By the cc condition, $cc(A\cap M_P) = k = cc(B\cap M_P)$. By Lemma~\ref{lem:mobOfTree} each connected component of $A\cap M_P$ and $B\cap M_P$ contributes $-1$ to the total sum of M\"obius values for $A$ and $B$. Since the two sets agree except possibly at $M_P$, we see that (b)  holds in this case too.

 Lastly let us prove (c). By (b), the sum of M\"obius values of any preimage of $\psi(A)$ is the same. Thus it suffices to show the result when $A = \{a\in P \mid a\in X \mbox{ for some } X\in \psi(A)\}$.  Then $A$ contains all the elements of the equivalence classes of $\psi(A)$.  Thus,
$$
 \sum_{x\in A} \mu_{\widehat{P}}(x)  = \sum_{X\in \psi(A)} \sum_{x\in X}    \mu_{\widehat{P}}(x) =   \sum_{X\in \psi(A)}\mu_{\widehat{P/{\sim}}}(X)
 $$
where the last equality holds by Corollary~\ref{cor:sumOfMobVal}.
\end{proof}

 The surjection $\psi$ described in Lemma~\ref{lem:connectedDownsets}, provides a mapping between connected downsets of $P$ and $P/{\sim}$ that preserves the sum of the M\"obius values.  Thus we immediately get the following.

 \begin{lemma}\label{lem:PGorIffPsimGor}
Let $P$ be a poset with a $\hat{1}$.  Suppose that 
\begin{enumerate}
    \item[(1)] $M_P$ is acyclic.
     \item[(2)] $P$ satisfies the cc condition.
\end{enumerate}
 Let $\sim$ be the tree relation on $P$.   Then $U_P$ is Gorenstein if and only if $U_{P/{\sim}}$ is Gorenstein.
\end{lemma}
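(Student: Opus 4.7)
The plan is to reduce both sides of the equivalence, via Theorem~\ref{thm:mobGorensteinLab}, to the same M\"obius-sum condition on connected proper downsets, and then to transfer this condition through the surjection $\psi$ supplied by Lemma~\ref{lem:connectedDownsets}.

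First I would verify that $P/{\sim}$ still has a maximum element, which is necessary to invoke Theorem~\ref{thm:mobGorensteinLab} on the quotient. For every $y \in P$ we have $y \leq \hat{1}$; letting $Y$ and $Z$ denote the equivalence classes of $y$ and $\hat{1}$ respectively, Definition~\ref{def:quoPoset} gives $Y \leq Z$ in $P/{\sim}$. So $Z$ is a maximum in the quotient, and Theorem~\ref{thm:mobGorensteinLab} characterizes Gorensteinness of $U_{P/{\sim}}$ as the equation $\sum_{X \in D} \mu_{\widehat{P/{\sim}}}(X) = -1$ holding for every connected proper downset $D$ of $P/{\sim}$, and analogously for $U_P$.

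For the forward direction, assume $U_P$ is Gorenstein and let $D$ be a connected proper downset of $P/{\sim}$. By Lemma~\ref{lem:connectedDownsets}(a) the map $\psi$ is surjective, so I would pick $A \in C(P)$ with $\psi(A) = D$. Properness of $A$ holds because $D \neq P/{\sim}$ provides a class $Y \notin D$, and no element of $Y$ lies in $A$. Then Lemma~\ref{lem:connectedDownsets}(c) gives
\[\sum_{X \in D} \mu_{\widehat{P/{\sim}}}(X) = \sum_{x \in A} \mu_{\widehat{P}}(x) = -1,\]
where the final equality applies Theorem~\ref{thm:mobGorensteinLab} to $A$. The reverse direction is symmetric: starting from a connected proper downset $A$ of $P$, I would apply $\psi$, use parts (a) and (c) of Lemma~\ref{lem:connectedDownsets}, and conclude from the Gorenstein hypothesis on $P/{\sim}$.

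The main obstacle is verifying properness of $\psi(A)$ in the reverse direction. The natural argument is that $\hat{1} \notin A$ (since $A$ is a proper downset) forces the singleton class $\{\hat{1}\}$ to be excluded from $\psi(A)$, but this only works when $\hat{1} \notin M_P$. The case $\hat{1} \in M_P$ forces $\ell(P) = 1$ and collapses $P/{\sim}$ to a single point, which should be handled as a separate base case (and is essentially trivial, since length-1 posets with $\hat{1}$ are always Gorenstein by direct M\"obius computation). Beyond this edge case, the proof is a direct unpacking of Lemma~\ref{lem:connectedDownsets} in light of Theorem~\ref{thm:mobGorensteinLab}.
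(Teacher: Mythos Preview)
Your proof is correct and follows exactly the paper's approach: the paper's argument is the single sentence ``the surjection $\psi$ \dots preserves the sum of the M\"obius values,'' and your proposal simply unpacks this using Theorem~\ref{thm:mobGorensteinLab} and parts (a) and (c) of Lemma~\ref{lem:connectedDownsets}. Your handling of properness in both directions (including the $\hat{1}\in M_P$ edge case, which forces $\ell(P)=1$ and makes both sides trivially Gorenstein) is a detail the paper leaves implicit, but it does not constitute a different route.
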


We will  use the previous lemma to provide a characterization and an algorithm to determine when $U_P$ is Gorenstein provided $P$ has a $\hat{0}$ or $\hat{1}$. However, before we do that, we need to further discuss the cc condition.  Clearly we do not want to actually check all connected downsets for the cc condition, since if we already knew all the connected downsets, we could just sum the M\"obius values in these downsets to determine  if $U_P$ is Gorenstein.   Instead, we will show the cc condition is equivalent to a condition that, when the number of connected components of $M_P$ is small compared to the number of elements of $P$, can be checked efficiently.

\begin{definition}\label{def:treeDownsetCond}
Let $P$ be a poset with a $\hat{1}$ and suppose that $M_P$ is acyclic and contains  $k$ connected components.  For all sets $S \subseteq P \setminus M_P$ such that $1\leq |S| \leq k$, let $T_1,T_2,\dots, T_m$ be the connected components of $M_P$ such that $\downset{S}\cap T_i\neq \emptyset$.   Let $A_j = \downset{S,T_1,T_2,\dots, T_{j-1}, T_{j+1},\dots, T_m}$. We say that $P$ satisfies the tree downset condition if whenever $A_j$ is connected we have that $cc(A_j\cap M_P)=m$.
\end{definition}

Clearly, the cc condition implies the tree downset condition.  However, as  we will see, the tree downset condition is actually equivalent to it.   This equivalence means that if $M_P$ has $k$ connected components, we only need to look at downsets generated by   $k$ elements and the connected components of $M_P$ below them. This can significantly improve the efficiency of checking the cc condition.  For example, if $M_P$ is connected, the tree downset condition says we need to only check principal downsets for the cc condition.

In the proof of the next lemma we will need some new terminology.  Given a path in the Hasse diagram of $P$, $v_1v_2\dots v_\ell$,   we say $v_i$ is a \emph{peak} if $v_i>v_{i-1}, v_{i+1}$.    We say that  a $v_j$ is a    \emph{valley} if $v_j<v_{j-1}, v_{v_j+1}$.   See Figure~\ref{fig:PeaksAndValleys} for an example of peaks and valleys.

\begin{lemma}\label{lem:ccConditionEqTotdCondition}
Let $P$ be a poset with a $\hat{1}$ such that $M_P$ is acyclic. Then $P$ satisfies the cc condition if and only if  $P$ satisfies the tree downset condition.
\end{lemma}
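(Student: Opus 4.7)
The plan is to prove the two directions separately.

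\textbf{Forward direction.} This is a direct specialization of the cc condition. For any $S$ with $1 \leq |S| \leq k$ and any $j$ appearing in the tree downset condition, the downset $A_j = \downset{S, T_1,\dots, T_{j-1}, T_{j+1},\dots, T_m}$ meets exactly the $m$ components $T_1,\dots, T_m$ of $M_P$: it contains all of $T_i$ for $i \neq j$ and meets $T_j$ via the nonempty set $T_j \cap \downset{S}$. Whenever $A_j$ is connected, applying the cc condition to it yields $cc(A_j \cap M_P) = m$, which is precisely what the tree downset condition requires.

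\textbf{Reverse direction.} I would argue the contrapositive: assume the cc condition fails, witnessed by a connected downset $C$ and a component $T$ of $M_P$ with $T\cap C$ disconnected. My aim is to produce $S \subseteq P \setminus M_P$ with $|S| \leq k$ and an index $j$ so that $A_j$ is connected but $T_j \cap \downset{S}$ is disconnected. The first step is a reduction to $C = \downset{S_0}$ where $S_0 = C \setminus M_P$: any element of $C \cap M_P$ not in $\downset{S_0}$ would be isolated in the Hasse diagram of $C$, since level-2 elements of $M_P$ only have upper covers outside $M_P$ (which would themselves lie in $S_0$) and minimal elements of $M_P$ can only connect upward to level-2 elements in the same component or to elements in $S_0$. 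This contradicts connectedness of $C$ together with the disconnection of $T \cap C$, so we may assume $C = \downset{S_0}$.

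The second step picks $x, y$ in distinct components of $T\cap C$ and takes a shortest path $\gamma = v_0 v_1 \cdots v_\ell$ from $x = v_0$ to $y = v_\ell$ in the Hasse diagram of $C$. The peak/valley decomposition introduced just above the lemma lets me identify the peaks of $\gamma$, and the set $S$ of peaks that lie outside $M_P$ is nonempty (because $\gamma$ must leave the component $T$, and any peak inside $M_P$ would be a cover of two elements in the same component of $M_P$, preventing it from bridging the disconnection). I would then verify that $\downset{S}$ contains the full path $\gamma$ (so it is connected and still contains $x$ and $y$), that $T \cap \downset{S}$ remains disconnected (since the bridging elements of $T$ that would join the components of $x$ and $y$ lie outside $C \supseteq \downset{S}$), and that $A_j$ for $j$ indexing $T$ is connected, because $\downset{S}$ is connected and each $T_i$ with $i \neq j$ is attached to $\downset{S}$ through an element below some peak of $\gamma$.

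The main obstacle I anticipate is the bound $|S| \leq k$. This is the step that most uses the acyclicity of $M_P$ together with the minimality of $\gamma$. Intuitively, each peak of a shortest path corresponds to a distinct merge or visit among the components of $M_P$ that $\gamma$ passes through, and since $M_P$ has only $k$ components, the number of essential peaks is bounded by $k$. A rigorous argument would likely proceed by induction on the number of components of $M_P$ visited by $\gamma$, showing that a shortcut avoiding a repeated visit would contradict the minimality of $\gamma$; the acyclicity of $M_P$ ensures that within each component the relevant connecting subpaths in $T_i$ are unique, which prevents redundancy among the peaks.
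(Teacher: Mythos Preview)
Your forward direction is fine and matches the paper.

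For the reverse direction you have identified the right skeleton---pick $x,y$ in the same tree $T$ but different components of $T\cap C$, take a path between them in $C$, let $S$ be the peaks outside $M_P$, and violate the tree downset condition with this $S$---but the argument has a genuine gap at exactly the point you flag: the bound $|S|\le k$.

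The sketch you give for this bound does not work as written. Using a \emph{shortest} path gives no control on the number of peaks; a short path can zig-zag many times. The paper instead takes a path with the \emph{fewest peaks outside $M_P$}, and then uses the tree downset condition itself to force the bound. Concretely: if there were $k$ peaks $w_1,\dots,w_k$ outside $M_P$, then associating a tree of $M_P$ to each of the $k+1$ valleys and applying pigeonhole produces a repeated tree $T'$. One then builds, from the peaks between the two occurrences of $T'$, a set of at most $k$ elements and a corresponding $A_j$ (with $T'$ omitted). The assumed tree downset condition on this $A_j$ furnishes a path through $M_P$ that shortcuts the original path and strictly decreases the number of peaks outside $M_P$, contradicting minimality. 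Your sketch (``each peak corresponds to a distinct component visited'' and ``induction on components visited'') does not invoke the tree downset condition at all in this step, and without it there is no mechanism producing the shortcut. Acyclicity of $M_P$ alone does not prevent a path in $C$ from having arbitrarily many peaks outside $M_P$.

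A smaller issue: your reduction to $C=\downset{S_0}$ is not correct as stated. An element of $C\cap M_P$ not lying under any $s\in S_0$ need not be isolated in $C$; it can be connected through edges of $M_P$ to other elements of its tree that are in $C$. This reduction is also unnecessary for the argument.
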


\begin{proof}
First, note that the forward direction is immediate  given the definition of the cc condition and the tree downset condition.   Thus, we may now focus on the backwards direction.  Suppose that $P$ satisfies the tree downset condition, but there is a connected downset $C$  where the number of connected components of $M_P$ that intersect nontrivially with $C$ is not $cc(C\cap M_P)$.  Then it must be the case that there are elements $a,b\in C\cap M_P$ which are connected in $M_P$, but not connected in $C\cap M_P$.

 Since $C$ is connected, we know that there is an $a-b$ path in $C$.  
 Among all such $a-b$ paths, let $Q$ be a path which uses the minimal number of peaks outside of $M_P$.  We claim that $Q$ has at most $k-1$ peaks that are not in $M_P$.  To see why, suppose this was not the case and let $w_1,w_2,\dots, w_k$ be the first $k$ peaks of $Q$ not in $M_P$.

\begin{figure}
    \begin{tikzpicture}[scale=.8]
    
    \draw[]  (-2,2)- -(-1.5,1.5) ;
        \draw[]  (-2,2)--(-2.5,1.5) ;

\node[circle,  fill=blue,  inner sep=0, minimum size= 7 pt] (a) at (-2,2) {};
    \node at (-2,2.5) {$u$};
    \node[circle,  fill=black,  inner sep=0, minimum size= 7 pt] (a) at (-1.5,1.5) {};
        \node[circle,  fill=black,  inner sep=0, minimum size= 7 pt] (a) at (-2.5,1.5) {};

    \node[circle,  fill=red,  inner sep=0, minimum size= 7 pt] (a) at (0,0) {};

    \draw[dashed]  (-2,2)--(0,0);
       \node[circle,  fill=blue,  inner sep=0, minimum size= 7 pt] (a) at (2,2) {};
        \node[circle,  fill=black,  inner sep=0, minimum size= 7 pt] (a) at (1.5,1.5) {};
    \node[circle,  fill=black,  inner sep=0, minimum size= 7 pt] (a) at (2.5,1.5) {};
    \node at (2,2.5) {$v$};

            \draw[]  (2,2)--(1.5,1.5) ;
            \draw[]  (2,2)--(2.5,1.5) ;

      \draw[dashed]  (2,2)--(0,0);
    \draw[dashed] (-3, 1)--(-2,2); 
     \draw[dashed] (3, 1)--(2,2); 
    
    \draw[dotted, very thick] (-3.5,-.5) -- (3.5,-.5);
    
    \node at (3.25,-1) {$M_P$};
    \node at (3.25,0) {$P\setminus M_P$};

    \begin{scope}[shift = {(0,-4)}]
        \node[circle,  fill=blue,  inner sep=0, minimum size= 7 pt] (a) at (-2-4,2) {};
        \node[circle,  fill=black,  inner sep=0, minimum size= 7 pt] (a) at (-1.5-4,1.5) {};
        \node[circle,  fill=black,  inner sep=0, minimum size= 7 pt] (a) at (-2.5-4,1.5) {};
    \node at (-2-4,2.5) {$u$};

        \draw[]  (-2-4,2)--(-1.5-4,1.5) ;
        \draw[]  (-2-4,2)--(-2.5-4,1.5) ;
        
        \draw[dashed](-2.5-4-.5,1.5-.5)--(-2.5-4,1.5);
        
        \node[circle,  fill=red,  inner sep=0, minimum size= 7 pt] (a) at (-2.5,-1.5) {};
        \node[circle,  fill=blue,  inner sep=0, minimum size= 7 pt] (a) at (-2,-1) {};

        \draw[dashed](-1.5-4,1.5)--(-2.5,-1.5) ;
         \draw[](-2,-1)--(-2.5,-1.5) ;

        \node[circle,  fill=black,  inner sep=0, minimum size= 7 pt] (a) at (-1.5,-1.5) {};
        \draw[](-1.5,-1.5)--(-2,-1) ;

        \begin{scope}[shift={(1,0)}]
            
        \node[circle,  fill=red,  inner sep=0, minimum size= 7 pt] (a) at (-2.5,-1.5) {};
        \node[circle,  fill=blue,  inner sep=0, minimum size= 7 pt] (a) at (-2,-1) {};

        \draw[](-2,-1)--(-2.5,-1.5) ;

        \node[circle,  fill=black,  inner sep=0, minimum size= 7 pt] (a) at (-1.5,-1.5) {};
        \draw[](-1.5,-1.5)--(-2,-1) ;
        \end{scope}

          \begin{scope}[shift={(2,0)}]
            
        \node[circle,  fill=red,  inner sep=0, minimum size= 7 pt] (a) at (-2.5,-1.5) {};
        \node[circle,  fill=blue,  inner sep=0, minimum size= 7 pt] (a) at (-2,-1) {};

        \draw[](-2,-1)--(-2.5,-1.5) ;

        \node[circle,  fill=black,  inner sep=0, minimum size= 7 pt] (a) at (-1.5,-1.5) {};
        \draw[](-1.5,-1.5)--(-2,-1) ;
        \end{scope}

          \begin{scope}[shift={(3,0)}]
            
        \node[circle,  fill=red,  inner sep=0, minimum size= 7 pt] (a) at (-2.5,-1.5) {};
        \node[circle,  fill=blue,  inner sep=0, minimum size= 7 pt] (a) at (-2,-1) {};

        \draw[](-2,-1)--(-2.5,-1.5) ;

        \node[circle,  fill=black,  inner sep=0, minimum size= 7 pt] (a) at (-1.5,-1.5) {};
        \draw[](-1.5,-1.5)--(-2,-1) ;
        \end{scope}

          \begin{scope}[shift={(4,0)}]
            
        \node[circle,  fill=red,  inner sep=0, minimum size= 7 pt] (a) at (-2.5,-1.5) {};
        \node[circle,  fill=blue,  inner sep=0, minimum size= 7 pt] (a) at (-2,-1) {};

        \draw[](-2,-1)--(-2.5,-1.5) ;

        \node[circle,  fill=red,  inner sep=0, minimum size= 7 pt] (a) at (-1.5,-1.5) {};
        \draw[](-1.5,-1.5)--(-2,-1) ;
        \end{scope}

        \begin{scope}[shift = {(11.5,0)}]
          \node[circle,  fill=blue,  inner sep=0, minimum size= 7 pt] (a) at (-2-4,2) {};
        \node[circle,  fill=black,  inner sep=0, minimum size= 7 pt] (a) at (-1.5-4,1.5) {};
        \node[circle,  fill=black,  inner sep=0, minimum size= 7 pt] (a) at (-2.5-4,1.5) {};

    \node at(-2-4,2.5) {$v$};

        \draw[]  (-2-4,2)--(-1.5-4,1.5) ;
        \draw[]  (-2-4,2)--(-2.5-4,1.5) ;
        
        \draw[dashed](-1.5-4,1.5) --(-1-4,1);
        
        \draw[dashed](-2.5-4,1.5)--(3-12,-1.5);
          \end{scope}

    \draw[dotted, very thick] (-6,-.5) -- (6,-.5);
    
    \node at (5.25,-1) {$M_P$};
    \node at (5.25,0) {$P\setminus M_P$};

    \node at (0,-2) {$T$};

    \end{scope}

    \end{tikzpicture}
    \caption{Peaks are given  in blue and valleys are given in red.}
    \label{fig:PeaksAndValleys}
\end{figure}
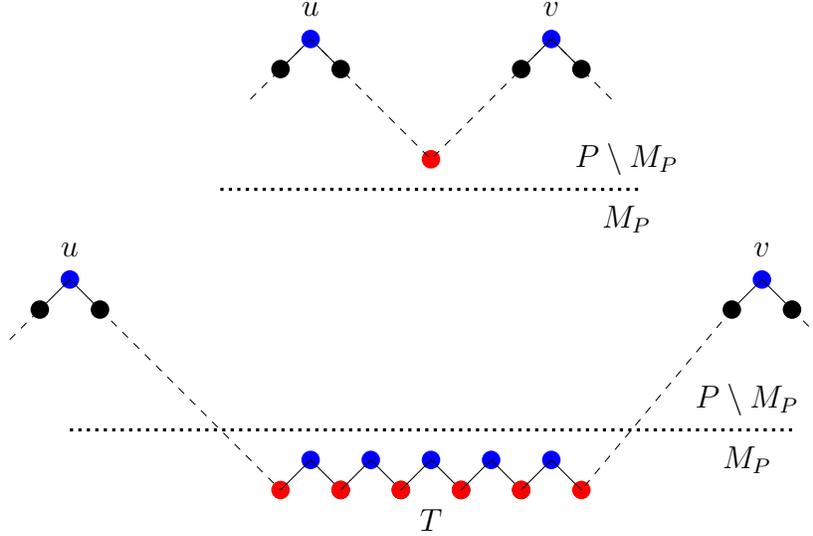

For each $i$, let $T_i$ be a tree of $M_P$ that has an element below the left valley of the peak $w_i$.  Moreover, let $T_{k+1}$ be a tree of $M_P$ that has an element  below the right valley of $w_k$. Since $M_P$ has $k$ trees, the sequence $T_1,T_2,\dots, T_k, T_{k+1}$ has a repeated element.  Call this repeated element $T$.   By definition, $T$ is below a left valley of a peak in $\{w_1,w_2,\dots, w_k\}$.  Moreover, we claim that $T$ is also below a right valley  of a peak in $\{w_1,w_2,\dots, w_k\}$.  If $T=T_{k+1}$, this is true by definition of $T_{k+1}$.  If not, then $T$ is below  a left valley of a peak say $v$. We claim that $T$ must be below  the right valley of the peak immediately preceding $v$ in the list $w_1,w_2,\dots, w_k$.  Call this peak $u$.   Clearly if the left valley of $v$ is the right valley of $u$ this is true. If not, it is because  the left valley of $v$ and the right valley of $u$ are in $M_P$ and the portion of the path between these valleys stay in $M_P$.  But then this means that both these valleys are in $T$.  See Figure~\ref{fig:PeaksAndValleys} for a visualization. We conclude that $T$ is below the right valley of some peak in $w_1,w_2,\dots, w_k$.  

Let $w_i$ and $w_j$ be such that $T$ is below the left valley of $w_i$ and $T$ is below the right valley of $w_j$ and such that $w_i$ and $w_j$ chosen so that they are closest in the list $w_1,w_2,\dots, w_k, w_{k+1}$ with this property.   Let $A$ be downset generated by $\{w_i,w_{i+1},\dots, w_j\}$ together with all the trees of $M_P$  below an element of  $\{w_i,w_{i+1},\dots, w_j\}$ except for $T$. We will show that all the peaks (including those in $M_P$) of the portion of $Q$ between $w_i$ and $w_j$ are in $A$.   Since the path between $w_i$ and $w_j$ is connected, the downset the path generates is connected.  Moreover since the path is generated by its peaks and since the additional trees added to $A$ are below elements of the path and the trees are connected themselves, we see that $A$ will be connected if we can show all the peaks are in $A$.

By definition, all the peaks outside of $M_P$ on the portion of $Q$ from $w_i$ to $w_j$ are in $A$.  Now suppose that $p$ is a peak in $M_P$ in the portion of $Q$ between $w_i$ and $w_j$.  Any time the path goes out of $M_P$, it create a peak outside of $M_P$ to go back down into $M_P$.  Since the trees of $M_P$ are connected components this means that once the path goes into $M_P$ it stays in the same tree of $M_P$ until the path leaves $M_P$.  Thus any peak in $M_P$ is in a tree that was below a  peak outside of $M_P$.  Since all the trees except for $T$ are completely contained in $A$, as long as these peaks in $M_P$ are not in $T$, they are in $A$.   Because  we chose $w_i$ and $w_j$ to be as close as possible, the peaks in $M_P$ on this portion of $Q$ cannot be in $T$.  Thus, all the peaks of $Q$ between $w_i$ and $w_j$ are in $A$ and so $A$ is connected.  

Let $t_1$ be an element of $T$ that is below the left valley of $w_i$ and let $t_2$ be an element of $T$ that is below the right valley of $w_j$.   $A$ is generated by no more than $k$ elements together with all but one tree of $M_P$ below it.  We claim that there is a $t_1-t_2$ path completely contained in $\downset{w_i,w_{i+1},\dots, w_j} \cap M_P$.  If this was not the case,  then this would contradict the tree downset condition. Since $w_i,w_{i+1},\dots, w_j \in C$, this path is completely contained in $C\cap M_P$.  However, we claim this allows us to modify $Q$ to get a new path from $a$ to $b$ that has less peaks outside of $M_P$. Let $Q'$ be the path from $a$ to $b$ which follows along $Q$ until we reach the left valley of $w_i$, then travels down to $t_1$, takes the path in $M_P$ from $t_1$ to $t_2$, then goes up to the right valley of $w_j$.  Note that this new path $Q$ does not contain any of the peaks in  $w_i,w_{i+1}, \dots, w_{j}$. When we descend from the left valley of $w_i$ to $t_1$ we do not create any new peaks.   Similarly we do not create new peaks when ascending from $t_2$ to the right valley of $w_j$.  We may create new peaks between $t_1$ and $t_2$, but these peaks are wholly contained in $M_P$.  As a result, $Q'$ has less peaks outside of $M_P$ than $Q$ does, contradicting the minimality of peaks outside of $M_P$ of $Q$.   We conclude that $Q$ has at most $k-1$ peaks. 

To finish, apply the same reasoning we did before to $Q$. Since $Q$ has at most  $k-1$ peaks outside of $M_P$, wee see that this allows us to find an $a-b$ path  in $C\cap M_P$, contradicting our original assumption.
\end{proof}

In the next theorem, we  provide a characterization of when the variety associated to a poset with a $\hat{1}$ is Gorenstein.  To do this, we  need some new notation.  Let $\sim$ be the tree relation on $P$.  We  define $P/{\sim}^k$ recursively as
$$
P/{\sim}^k  :=
\begin{cases}
 P &\mbox{ if } k=0,\\
 (P/{\sim}^{k-1})/{\sim} &\mbox{ for } k>0.
\end{cases}
$$
In other words, $P/{\sim}^k$ is the poset obtained by applying the tree relation  $k$ times to $P$.  See Figure~\ref{fig:successiveQuotients} for an example of a poset and its successive quotients.  We are now ready to give one of the main theorems of this section. Recall that $\ell(P)$  denotes the length of $P$.

\begin{figure}
 \begin{tikzpicture}

\node[circle,  fill=black,  inner sep=0, minimum size= 7 pt] (a) at (-2,0) {};
\node[circle, fill=black,  inner sep=0, minimum size= 7 pt] (b) at (-1,0) { };
 \node[circle, fill=black,  inner sep=0, minimum size= 7 pt] (c) at (0,0) { };
  \node[circle, fill=black,  inner sep=0, minimum size= 7 pt] (d) at (1,0) { };
 \node[circle, fill=black,  inner sep=0, minimum size= 7 pt] (e) at (2,0) { };

\node[circle,  fill=black,  inner sep=0, minimum size= 7 pt] (f) at (-1,1) {};
  \node[circle, fill=black,  inner sep=0, minimum size= 7 pt] (g) at (.5,1) { };

\draw [thick] (a)--(f);
\draw [thick] (b)--(f);
\draw [thick] (c)--(f);
\draw [thick] (c)--(g);
\draw [thick] (d)--(g);
 
 \node[circle, fill=black,  inner sep=0, minimum size= 7 pt] (i) at (-1,2) { };
  \node[circle, fill=black,  inner sep=0, minimum size= 7 pt] (j) at (.5,2) { };
 \node[circle, fill=black,  inner sep=0, minimum size= 7 pt] (k) at (2,2) { };

\draw [thick] (f)--(i);
\draw [thick] (g)--(i);
\draw [thick] (g)--(j);
\draw [thick] (g)--(k);
 \draw [thick] (e)--(k);

 \node[circle, fill=black,  inner sep=0, minimum size= 7 pt] (l) at (.5,3) { };
 \node[circle, fill=black,  inner sep=0, minimum size= 7 pt] (m) at (2,3) { };

\draw [thick] (i)--(l);
\draw [thick] (j)--(l);
\draw [thick] (j)--(m);
\draw [thick] (k)--(l);
\draw [thick] (k)--(m);

 \node[circle, fill=black,  inner sep=0, minimum size= 7 pt] (1) at (1.25,4) { };

\draw [thick] (l)--(1);
\draw [thick] (m)--(1);

\node at (0,-1) {$P$};

\begin{scope} [shift={(4,-1)}]

\node[circle,  fill=black,  inner sep=0, minimum size= 7 pt] (fg) at (.5,1) {};
  \node[circle, fill=black,  inner sep=0, minimum size= 7 pt] (h) at (2,1) { };

 \node[circle, fill=black,  inner sep=0, minimum size= 7 pt] (i) at (-1,2) { };
  \node[circle, fill=black,  inner sep=0, minimum size= 7 pt] (j) at (.5,2) { };
 \node[circle, fill=black,  inner sep=0, minimum size= 7 pt] (k) at (2,2) { };

\draw [thick] (fg)--(i);
\draw [thick] (fg)--(i);
\draw [thick] (fg)--(j);
\draw [thick] (fg)--(k);
\draw [thick] (h)--(k);

 \node[circle, fill=black,  inner sep=0, minimum size= 7 pt] (l) at (.5,3) { };
 \node[circle, fill=black,  inner sep=0, minimum size= 7 pt] (m) at (2,3) { };

\draw [thick] (i)--(l);
\draw [thick] (j)--(l);
\draw [thick] (j)--(m);
\draw [thick] (k)--(l);
\draw [thick] (k)--(m);

 \node[circle, fill=black,  inner sep=0, minimum size= 7 pt] (1) at (1.25,4) { };

\draw [thick] (l)--(1);
\draw [thick] (m)--(1);

\node at (0.5,0) {$P/{\sim}$};

\end{scope}
\begin{scope} [shift={(6.5,-2)}]

   \node[circle, fill=black,  inner sep=0, minimum size= 7 pt] (ijk) at (1.25,2) { };

 \node[circle, fill=black,  inner sep=0, minimum size= 7 pt] (l) at (.5,3) { };
 \node[circle, fill=black,  inner sep=0, minimum size= 7 pt] (m) at (2,3) { };

\draw [thick] (ijk)--(l);
 \draw [thick] (ijk)--(m);

 \node[circle, fill=black,  inner sep=0, minimum size= 7 pt] (1) at (1.25,4) { };

\draw [thick] (l)--(1);
\draw [thick] (m)--(1);

\node at (1.25,1) {$P/{\sim}^2$};

\end{scope}

\begin{scope} [shift={(8.5,-3)}]

 \node[circle, fill=black,  inner sep=0, minimum size= 7 pt] (lm) at (1.25,3) { };

 \node[circle, fill=black,  inner sep=0, minimum size= 7 pt] (1) at (1.25,4) { };

\draw [thick] (lm)--(1);

\node at (1.25,2) {$P/{\sim}^3$};

\end{scope}

\end{tikzpicture}

\caption{Poset $P$ and its successive quotients.}\label{fig:successiveQuotients}

\end{figure}
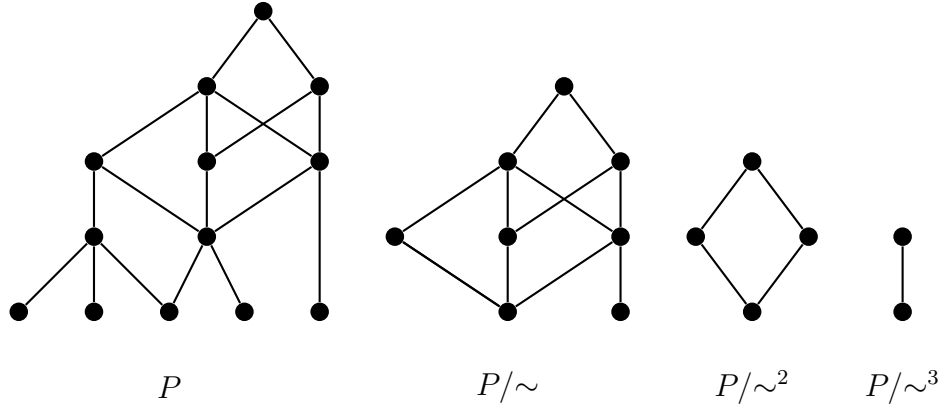

 \begin{theorem}\label{thm:characterizationOfRankedPosetGor}
 Let $P$ be a poset with a $\hat{1}$.  $U_P$ is Gorenstein if and only for all $0\leq k\leq \ell(P)-1$ the following hold
 \begin{itemize}
     \item [(1)] $M_{P/{\sim}^k}$ is acyclic.
     \item[(2)] $P/\sim^k$ satisfies the tree downset condition.
 \end{itemize}
\end{theorem}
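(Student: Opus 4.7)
The plan is to prove the characterization by induction on $\ell(P)$, with Lemma~\ref{lem:PGorIffPsimGor} providing the inductive reduction from $P$ to $P/{\sim}$. For the base case $\ell(P) = 1$, the poset $P$ consists of $\hat{1}$ together with $n-1$ incomparable minimal elements, each covered by $\hat{1}$, so $M_P = P$ has a star-shaped Hasse diagram (a tree), yielding (1); and since $P \setminus M_P = \varnothing$, condition (2) is vacuous. The labeling $\phi(\hat{1}) = n-1$ and $\phi(m) = -1$ for each minimal $m$ is easily verified to be a Gorenstein labeling (the dimension-$1$ upsets are exactly the sets $P \setminus \{m\}$ for minimal $m$), so $U_P$ is Gorenstein and both sides of the equivalence hold trivially.

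For the inductive step $\ell(P) \geq 2$, the argument rests on two auxiliary facts about the quotient $P/{\sim}$, both available once $M_P$ is acyclic. First, $P/{\sim}$ has a $\hat{1}$: since $\ell(P) \geq 2$, the element $\hat{1}$ must cover at least one non-minimal element and hence lies outside $M_P$, so $\{\hat{1}\}$ is a singleton equivalence class that remains the maximum of $P/{\sim}$. Second, $\ell(P/{\sim}) = \ell(P) - 1$. Both bounds follow from short chain arguments: the upper bound by lifting any chain $X_0 < X_1 < \cdots < X_m$ of $P/{\sim}$ to a chain of $P$ of one greater length (using that trees of $M_P$ are always minimal in $P/{\sim}$, so each $X_i$ with $i \geq 1$ is a singleton $\{x_i\}$ with $x_i \notin M_P$, and that $x_1 \notin M_P$ forces $x_1$ to cover a non-minimal element, giving the needed extra step), and the lower bound by observing that in any maximum chain $x_0 \lessdot x_1 \lessdot \cdots \lessdot x_{\ell(P)} = \hat{1}$ of $P$ the element $x_1$ is forced to be cover-only-min (otherwise $x_1$ would cover some non-minimal element and one could extend the chain downward, exceeding the length), so $[x_0] = [x_1]$ in $P/{\sim}$ while the higher classes $[x_2], \ldots, [x_{\ell(P)}]$ are pairwise distinct singletons.

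With these facts in place, both directions unfold mechanically. For $(\Rightarrow)$, if $U_P$ is Gorenstein then Lemma~\ref{lem:NecessaryCondForGor} combined with Lemma~\ref{lem:ccConditionEqTotdCondition} yields (1) and (2) at $k = 0$, and Lemma~\ref{lem:PGorIffPsimGor} further gives that $U_{P/{\sim}}$ is Gorenstein; the inductive hypothesis supplies (1) and (2) for $P/{\sim}$ at all $0 \leq j \leq \ell(P) - 2$, which via the identity $(P/{\sim})/{\sim}^j = P/{\sim}^{j+1}$ translates to (1) and (2) for $P$ at all $1 \leq k \leq \ell(P) - 1$. For $(\Leftarrow)$, assuming (1) and (2) hold for $P$ at every $0 \leq k \leq \ell(P) - 1$, the $k = 0$ case together with Lemma~\ref{lem:ccConditionEqTotdCondition} supplies the hypotheses of Lemma~\ref{lem:PGorIffPsimGor}, while the conditions at $k \geq 1$ supply (1) and (2) for $P/{\sim}$ at every $0 \leq j \leq \ell(P/{\sim}) - 1$; by induction $U_{P/{\sim}}$ is Gorenstein, and Lemma~\ref{lem:PGorIffPsimGor} concludes that $U_P$ is Gorenstein.

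I expect the main obstacle to be the length identity $\ell(P/{\sim}) = \ell(P) - 1$. The upper bound is essentially a routine chain-lifting exercise, but the lower bound hinges on the slightly subtle observation that the second element of any maximum chain in $P$ must lie in $M_P$, a fact forced by maximality but requiring a careful argument that no other consecutive pair along the chain gets collapsed in $P/{\sim}$.
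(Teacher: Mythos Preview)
Your proof is correct and follows essentially the same route as the paper's: both directions reduce to Lemma~\ref{lem:PGorIffPsimGor} after invoking Lemma~\ref{lem:NecessaryCondForGor} and Lemma~\ref{lem:ccConditionEqTotdCondition}, and the passage to the base case relies on the length dropping by one under the tree quotient. The only structural difference is packaging: the paper argues the forward direction by strong induction on $k$ and the backward direction by iterating Lemma~\ref{lem:PGorIffPsimGor} down to $P/{\sim}^{\ell(P)-1}$, whereas you organize both directions as a single induction on $\ell(P)$; your version has the minor advantage of actually justifying the length identity $\ell(P/{\sim}) = \ell(P) - 1$, which the paper simply asserts.
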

\begin{proof}
 ($\Rightarrow$)  Suppose that $U_P$ is Gorenstein.  We prove assumptions (1) and (2) hold for all $0\leq k\leq \ell(P)-1$ by using strong induction on $k$.  When $k=0$, the result holds by Lemma~\ref{lem:NecessaryCondForGor} and Lemma~\ref{lem:ccConditionEqTotdCondition}.  

Now suppose that $k>0$.  By the inductive hypothesis, assumptions (1) and (2) hold for $P/{\sim}^{j}$ for all $0\leq j \leq k-1$.  By Lemma~\ref{lem:ccConditionEqTotdCondition}, (2) implies that $P/{\sim}^{j}$  satisfies the cc condition for all $0\leq j \leq k-1$.    Thus, we can repeatedly apply Lemma~\ref{lem:PGorIffPsimGor} to get that $U_P$ Gorenstein implies that $U_{P/{\sim}^{k-1}}$ is Gorenstein.  Applying the lemma to $P/{\sim^{k-1}}$ implies that $U_{P/{\sim}^k}$ is Gorenstein. Then  assumptions (1) and (2) hold for $P/{\sim}^k$ by Lemma~\ref{lem:NecessaryCondForGor}.  

($\Leftarrow$) Now suppose assumptions (1) and (2) hold for all $0\leq k\leq \ell(P)-1$.    Then we can repeatedly apply Lemma~\ref{lem:PGorIffPsimGor} to see that $U_P$ is Gorenstein if and only $U_{P/{\sim}^{\ell(P)-1}}$ is Gorenstein.  Now every time we apply the tree relation to poset its0 length drops by 1.  Thus, applying  the quotient $\ell(P)-1$ times, we are left with a poset consisting of a $\hat{1}$ and minimal elements.  Clearly such a poset gives rise of a Gorenstein variety. Thus, $U_{P/{\sim}^{\ell(P)-1}}$ is Gorenstein and so $U_P$ is Gorenstein as well.
\end{proof}

The previous theorem allows us to give an algorithm to determine if a poset with a $\hat{0}$ or $\hat{1}$ gives rise to a Gorenstein variety.

\begin{algorithm}\label{alg:RankedPosAlg}
 {\it  Algorithm to determine if $U_P$ is Gorenstein}\\
{\bf Input}: $P$ a  poset with a $\hat{0}$ or $\hat{1}$.\\
{\bf Output}: A Yes/No decision if $U_P$ is Gorenstein.\\
{\bf Method}:  
\begin{enumerate}
    \item If $P$ has a $\hat{0}$, set $P$ to be $P^\star$.
    \item While $\ell(P)>1$
    \begin{enumerate}
        \item Check the truth value of the following statement. 
        \begin{center}
            \textit{ $M_P$ is acyclic.}
        \end{center}
        If the statement is false, terminate algorithm and return ``No".
        \item Check the truth value of the following statement. 
       \begin{center}
            \textit{ $P$ satisfies the tree downset condition.}
        \end{center}
        If the statement is false, terminate algorithm and return ``No".
        \item Set $P$ to be  $P/{\sim}$, where $\sim$ is the tree relation.
    \end{enumerate}
\item Return ``Yes".
\end{enumerate}

\end{algorithm}

At each step of the algorithm, we need to check the tree downset condition.  Since this condition depends on the number of connected components in $M_P$, it is important to understand how the number of connected components change as we take  quotients.

\begin{lemma}\label{lem:numCCsAfterQuo}
Let $P$ be a poset with $k$ trees  in $M_P$. Then $M_{P/{\sim}}$ has at most $k$ trees.
\end{lemma}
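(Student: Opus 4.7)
The plan is to count minimal elements of $P/{\sim}$ and then show that every connected component of $M_{P/{\sim}}$ must contain at least one such minimal element. Throughout, $\sim$ denotes the tree relation on $P$, which assumes $M_P$ is acyclic; the equivalence classes either consist of a single element outside $M_P$ or of the vertex set of a full tree of $M_P$.

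The first step is to identify the minimal elements of $P/{\sim}$. I claim there are exactly $k$ of them, one for each tree of $M_P$. For the easy direction, if $X \in P/{\sim}$ is the class corresponding to a tree $T$ of $M_P$ and $Y \leq X$ in $P/{\sim}$, then there exist $y \in Y$ and $x \in X = T$ with $y \leq x$ in $P$. Since $x \in M_P$, any $y < x$ must be a minimal element of $P$ (if $x$ itself is minimal we simply have $y = x$), and such $y$ then lies in the same component of $M_P$ as $x$; hence $y \in T = X$ and $Y = X$. Conversely, suppose $X$ is minimal in $P/{\sim}$ and $X$ is a singleton $\{x\}$ with $x \notin M_P$. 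By the definition of $M_P$, the element $x$ has some cover $y \lessdot x$ in $P$ with $y$ not minimal. Then $y \notin X$, so the class $Y$ of $y$ satisfies $Y < X$ in $P/{\sim}$, contradicting the minimality of $X$. Thus the only minimal classes are the $k$ tree classes.

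The second step is the key observation: every connected component of $M_{P/{\sim}}$ contains at least one minimal element of $P/{\sim}$. Fix such a component $C$ and suppose for contradiction that no element of $C$ is minimal in $P/{\sim}$. Pick any $Y \in C$. Since $Y \in M_{P/{\sim}}$ and $Y$ is not minimal, by the definition of $M_{P/{\sim}}$ every cover $X \lessdot Y$ in $P/{\sim}$ is a minimal element of $P/{\sim}$. Because $Y$ is not minimal, at least one such $X$ exists, and being minimal it belongs to $M_{P/{\sim}}$. The cover $X \lessdot Y$ is an edge of the Hasse diagram of $M_{P/{\sim}}$, so $X$ lies in the same component as $Y$, i.e.\ $X \in C$, contradicting the assumption.

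Combining the two steps, the number of components of $M_{P/{\sim}}$ is at most the number of minimal elements of $P/{\sim}$, which is $k$. The main thing to be careful about is the second step's reliance on the precise definition of $M_{P/{\sim}}$ (covers only minimal elements), but once that is in hand the argument is short. No step looks like a serious obstacle; the one subtlety is verifying in step one that a class $X = \{x\}$ with $x \notin M_P$ cannot be minimal in $P/{\sim}$, which uses the definition of $M_P$ to produce a non-minimal cover of $x$ and hence a strictly smaller class.
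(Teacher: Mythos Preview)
Your proof is correct and follows essentially the same approach as the paper: identify the minimal elements of $P/{\sim}$ as precisely the $k$ tree classes, observe that every connected component of $M_{P/{\sim}}$ must contain a minimal element of $P/{\sim}$, and conclude. The paper's version simply asserts these two facts without justification, while you have spelled out the verifications in detail.
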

\begin{proof}
Let $T_1,T_2,\dots, T_k$ be the trees of $M_P$. Then $T_1,T_2,\dots, T_k$  are the minimal elements of $P/{\sim}$.  Every connected component of $M_{P/{\sim}}$  must contain a minimum element of $P/{\sim}$.   So every connected component of $M_{P/{\sim}}$  contains some tree in $T_1,T_2,\dots, T_k$.   Moreover,  each $T_i$ is in exactly one connected component of  $M_{P/{\sim}}$. It follows that there are at most $k$ trees in $M_{P/{\sim}}$.
\end{proof}

In the next theorem, we give a bound on the complexity of Algorithm~\ref{alg:RankedPosAlg}.
Note that, the brute force method which would require first enumerating all downsets is known to be \textsf{\#P}-complete as we have previously remarked.
So, if the number of connected components of $M_P$ is small compared to $|P|$, the algorithm can be significantly quicker than the brute force approach (assuming \textsf{\#P}-complete problems are not actually easy).
Toward formalizing this, we analyze Algorithm~\ref{alg:RankedPosAlg} in terms of parameterized complexity~\cite{DF99}.
The parameterized complexity class \XP{} is consists of languages that are pairs of a problem instance and a parameter $k$ such that there is an algorithm solving the problem with time bounded by $f(k)\cdot N^{g(k)}$ where $N$ is the size of the problem instance and both $f$ and $g$ are computable functions.
We call an algorithm demonstrating membership in \XP{} an \emph{\XP{}-algorithm}.
In our situation a problem instance will be given by a poset $P$ on $n$ elements.
The parameter $k$ we use is the number of connected components of $M_P$.
We note that this parameter can be efficiently computed from the poset.

\begin{theorem}
Algorithm~\ref{alg:RankedPosAlg} is a valid \XP{}-algorithm which can be made to run in $O(kn^{k+3})$ time where $n=|P|$ and $k$ is the number of connected components of $M_P$.
\end{theorem}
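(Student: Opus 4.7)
The plan is to establish two things: correctness, and the claimed runtime. Correctness is essentially a restatement of Theorem~\ref{thm:characterizationOfRankedPosetGor}. Step (1) handles the case when $P$ has a $\hat{0}$ using Proposition~\ref{prop:dual}. Inside the loop, the checks in (a) and (b) correspond exactly to conditions (1) and (2) in the theorem once Lemma~\ref{lem:ccConditionEqTotdCondition} is invoked to replace the cc condition by the (efficiently checkable) tree downset condition. Hence an output of ``No'' is always correct, and otherwise, after at most $\ell(P) - 1$ iterations (each decreasing length by $1$ since the trees of $M_P$ collapse to single minimal elements of the quotient), we reach a poset of length $\le 1$, whose associated toric variety is trivially Gorenstein.

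For the runtime, I would bound the cost of each loop iteration and then multiply by the iteration count. Constructing $M_P$ from the Hasse diagram and testing whether it is acyclic is a standard $O(n^2)$ graph computation, and forming the quotient $P/\!\sim$ is likewise $O(n^2)$. The dominant cost is (b). To check the tree downset condition we enumerate all subsets $S \subseteq P \setminus M_P$ with $1 \le |S| \le k$; the number of such $S$ is $\sum_{i=1}^k \binom{n}{i} = O(n^k)$. For each $S$ we compute $\downset{S}$, determine which of the trees $T_1, \dots, T_m$ of $M_P$ meet $\downset{S}$ (where $m \le k$), and then for each of the $m \le k$ values of $j$ form $A_j$, test connectivity of $A_j$, and when connected count $cc(A_j \cap M_P)$. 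Each such per-$S$ task is an $O(n^2)$ graph manipulation, so the work per $S$ is $O(k n^2)$, and step (b) costs $O(k n^{k+2})$ per iteration.

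The crucial invariant is provided by Lemma~\ref{lem:numCCsAfterQuo}: the parameter $k$ does not grow under $\,/\!\sim$, so the bound $O(n^k)$ on the number of subsets $S$ remains valid at every iteration. Since the while loop runs at most $\ell(P) - 1 < n$ times, multiplying gives total time $O(n) \cdot O(k n^{k+2}) = O(k n^{k+3})$, as claimed. Finally, since both $f(k) = k$ and $g(k) = k + 3$ are computable, this runtime bound witnesses membership in \XP{} with the stated parameter.

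The principal technical point is the runtime analysis of step (b); the rest is bookkeeping. The subtle issue is to be sure that the parameter does not blow up over the course of the algorithm, which is exactly why Lemma~\ref{lem:numCCsAfterQuo} is invoked. A minor care is needed for Step (1), but dualizing costs only $O(n^2)$ and is dominated.
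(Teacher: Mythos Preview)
Your proposal is correct and follows essentially the same approach as the paper: validity via Theorem~\ref{thm:characterizationOfRankedPosetGor} (together with the trivial base case), and the runtime by bounding each loop iteration at $O(kn^{k+2})$ through enumeration of the $O(n^k)$ subsets $S$ with per-subset work $O(kn^2)$, invoking Lemma~\ref{lem:numCCsAfterQuo} to keep the parameter fixed across iterations, and multiplying by at most $n$ iterations. The paper's argument is organized in exactly this way.
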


\begin{proof}
The validity of the algorithm follows from Theorem~\ref{thm:characterizationOfRankedPosetGor} and the fact that if $P=M_P$, then $U_P$ is Gorenstein.   

Now we show the result concerning the time complexity.  First, the algorithm needs to determine $M_P$.  This can be done by first finding all the minimal elements of $P$ which can be done in $O(n^2)$ time by comparing elements.  Then one must find the elements that cover only minimal elements. This too can be done in $O(n^2)$ time.   Because we will need it later, we can also find the connected components of $M_P$ again in $O(n^2)$ time using a depth first search.
Next, the algorithm needs to determine if $M_P$ is acyclic.   Since we know the number of connected components of $M_P$, we can just count the number of edges in $M_P$ to decide this.

After determining if $M_P$ is acyclic, the algorithm must check the tree downset condition of $P$ (Definition~\ref{def:treeDownsetCond}).  Let $S\subseteq P\setminus M_P$ with $|S|\leq k$ where $k$ is the number of connected components of $M_P$.  To check the tree downset condition for $S$, we must find all the connected components that intersect nontrivially with $\downset{S}$.
We can compute $\downset{S}$ in $O(kn)$ time by comparing each element  of $P$ with each element of $S$ and from these comparisons we can also determine which connected components of $M_P$ intersect nontrivially with $\downset{S}$.

Next, for each $j$ we will compute  $A_j$ given in Definition~\ref{def:treeDownsetCond} and check if it is connected. 
Computing $A_j$ can be done in linear time given that we have already computed $\downset{S}$.
Again, using a depth first search, determining connectedness of $A_j$ can be done in $O(n^2)$ time. For those that are connected, we  need to count the number of connected components of $A_j\cap M_P$.  This can be done in $O(n^2)$ time. Since there at most $k$ possible $A_j$'s to check, we see that for each set $S$, checking the tree downset condition for $S$, can be done in $O(kn^2)$ time.   Now there are $O(n^k)$ sets to check, so this step can be done in $O(kn^{k+2})$ time.
Lastly, if the tree downset condition is satisfied we must compute $P/{\sim}$. This can be handled by recording some of the computations performed above since $P/{\sim}$ is determined by comparing elements of $M_P$ with elements of $P \setminus M_P$.

We conclude that one iteration of steps (1) and (2) of the algorithm can be run in $O(kn^{k+2})$ time.  By Lemma~\ref{lem:numCCsAfterQuo}, the number of connected components of $M_{P/{\sim^{j}}}$ is at most $k$ and so each iteration can be run in $O(kn^{k+2})$ time.   Since the algorithm runs no more than $n$ times, we see that the full algorithm can run in $O(kn^{k+3})$ time as claimed.
\end{proof}

We finish this section by noting that when $M_P$ is connected, we can easily check if $U_P$ is Gorenstein.  The main reason for the simplicity  is that if we quotient out by the tree relation and $M_P$ is connected, $P/{\sim}$ is bounded and so $U_{P/{\sim}}$ is Gorenstein.  We leave the details of the proof for the following theorem to the reader.

\begin{theorem}
 Let $P$ be a poset with a $\hat{1}$ such that $M_P$ is connected.  Then the following are equivalent.
 \begin{enumerate}
     \item[(a)] $U_P$ is Gorenstein.
     \item[(b)] For all $x\in P$,  $\downset{x} \cap M_P$ is a tree.
       \item[(c)] For all $x\in  P\setminus M_P$, $\mu_{\widehat{P}}(x) = 0$.
 \end{enumerate}
\end{theorem}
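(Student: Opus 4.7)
The plan is to handle (a) $\Leftrightarrow$ (b) by reducing to the quotient $P/{\sim}$, which becomes bounded under the hypothesis that $M_P$ is connected, and then to establish (b) $\Leftrightarrow$ (c) using the M\"obius recursion in $\widehat{P}$ combined with the edge-vertex formula implicit in the proof of Lemma~\ref{lem:mobOfTree}.

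For (a) $\Leftrightarrow$ (b): since $M_P$ is connected, the parameter $k$ in Definition~\ref{def:treeDownsetCond} is $1$, so the tree downset condition reduces to requiring $\downset{x} \cap M_P$ be connected for each $x \in P \setminus M_P$. Combined with $M_P$ being acyclic (and so a tree), this is equivalent to $\downset{x} \cap M_P$ being a tree for all $x \in P$, since for $x \in M_P$ the set $\downset{x} \cap M_P = \downset{x}$ is automatically a connected subtree of $M_P$. Thus (b) is equivalent to the conjunction ``$M_P$ is a tree and $P$ satisfies the tree downset condition''. The forward implication (a) $\Rightarrow$ (b) then follows from Lemma~\ref{lem:NecessaryCondForGor} together with Lemma~\ref{lem:ccConditionEqTotdCondition}. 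For the reverse, observe that if $M_P$ is a tree then the tree relation collapses all minimals of $P$ to a single class, producing a unique minimum in $P/{\sim}$; combined with the existing $\hat{1}$, the poset $P/{\sim}$ is bounded. Hence $U_{P/{\sim}}$ is Gorenstein by the labeling construction used in the proof of Theorem~\ref{thm:crepant}, and Lemma~\ref{lem:PGorIffPsimGor} delivers (a).

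For (b) $\Leftrightarrow$ (c): I would induct on the maximum length of a chain from a minimal element up to $x$, applied to the M\"obius recursion in $\widehat{P}$ (using $\mu_{\widehat{P}}(\hat{0}) = 1$), namely
\[
\mu_{\widehat{P}}(x) = -1 - \sum_{y \in \downset{x} \cap M_P} \mu_{\widehat{P}}(y) - \sum_{y \in \downset{x} \setminus (M_P \cup \{x\})} \mu_{\widehat{P}}(y).
\]
Under (b) the inductive hypothesis kills the rightmost sum, and the formula $\sum_{y \in C} \mu_{\widehat{P}}(y) = |E(C)| - |V(C)|$ derived in the proof of Lemma~\ref{lem:mobOfTree} makes the middle sum equal to $-1$, giving $\mu_{\widehat{P}}(x) = 0$ and hence (c). Conversely, to get (c) $\Rightarrow$ (b), first apply the identity $\sum_{y \in P} \mu_{\widehat{P}}(y) = -1$: hypothesis (c) annihilates the contribution of $P \setminus M_P$, leaving $|E(M_P)| - |V(M_P)| = -1$, so the connected $M_P$ must be a tree. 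Then the same recursion, now using $\mu_{\widehat{P}}(x) = 0$ from (c), yields $\sum_{y \in \downset{x} \cap M_P} \mu_{\widehat{P}}(y) = -1$, i.e.\ $|E(\downset{x} \cap M_P)| = |V(\downset{x} \cap M_P)| - 1$.

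The most delicate step is the last one, because the edge count $|V|-1$ alone does not force a graph to be a tree. The resolution is the fact that (c) has already forced $M_P$ to be a tree, so $\downset{x} \cap M_P$ is a forest; any forest with $|E| = |V|-1$ has exactly one connected component and is therefore a tree, yielding (b). With (b) $\Leftrightarrow$ (c) and (a) $\Leftrightarrow$ (b) both in hand, the three conditions are equivalent.
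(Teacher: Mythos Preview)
Your argument is correct and follows exactly the line the paper itself suggests (but leaves to the reader): for (a) $\Leftrightarrow$ (b) you specialize the tree downset condition to $k=1$, invoke Lemmas~\ref{lem:NecessaryCondForGor} and~\ref{lem:ccConditionEqTotdCondition}, and then use that $P/{\sim}$ is bounded together with Lemma~\ref{lem:PGorIffPsimGor}; for (b) $\Leftrightarrow$ (c) you run the M\"obius recursion in $\widehat{P}$ against the edge--vertex identity extracted from the proof of Lemma~\ref{lem:mobOfTree}. The only step worth making explicit is that the identity $\sum_{y\in C}\mu_{\widehat{P}}(y)=|E(C)|-|V(C)|$ from that proof does not actually require $C$ to be connected (only that $C\subseteq M_P$ be a downset), which is precisely what you need in the (c) $\Rightarrow$ (b) direction before connectedness of $\downset{x}\cap M_P$ has been established.
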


\section{Length 1 posets}
 \label{sec:len1}
 
 In this section we restrict our attention to posets of length $1$.
 That is, posets in which every element is either minimal or maximal.

\subsection{$\mathbb{Q}$-Gorenstein implies Gorenstein}

We start with a lemma which determines the values an $r$-Gorenstein labeling must take on any biconnected length $1$ poset.

\begin{lemma}\label{lem:bimaxmin}
If $P$ is a length 1 biconnected  poset and $\phi:P \to \mathbb{Z}$ is an $r$-Gorenstein labeling, then $\phi(x) = r$ for all maximal elements $x \in P$ and $\phi(y) = -r$ for all minimal elements $y \in P$.
\end{lemma}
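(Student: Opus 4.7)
The plan is to exhibit, for each maximal element $x$ and each minimal element $y$, an upset of dimension $1$ whose sum of $\phi$-values directly pins down $\phi(x)$ or $\phi(y)$. Biconnectedness of $P$ is exactly what makes these upsets have dimension $1$.

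First, fix a maximal element $x \in P$ and consider the singleton $A = \{x\}$. Since $x$ is maximal, $A$ is an upset. Using the formula $\dim(A) = cc(A) + cc(\overline{A}) - 1$, I have $cc(A) = 1$ trivially; because $P$ is biconnected, deleting the vertex $x$ leaves a connected graph, so $cc(\overline{A}) = cc(P \setminus \{x\}) = 1$. Hence $\dim(A) = 1$, and Definition~\ref{def:gorLab} gives $\phi(x) = \sum_{i \in A} \phi(i) = r$.

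Next, fix a minimal element $y \in P$ and consider $B = P \setminus \{y\}$. Since nothing is strictly below $y$ in $P$, removing $y$ from $P$ still leaves an upset (any $b \geq a \in B$ cannot equal $y$ as $y$ is minimal, so $b \in B$). Again using the dimension formula: $cc(\overline{B}) = cc(\{y\}) = 1$, and biconnectedness of $P$ forces $cc(B) = cc(P \setminus \{y\}) = 1$, so $\dim(B) = 1$. Therefore $\sum_{i \in B} \phi(i) = r$, and combining with $\sum_{i \in P} \phi(i) = 0$ from Definition~\ref{def:gorLab} yields $\phi(y) = -r$.

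The only step requiring real input beyond the definitions is noting that biconnectedness of the Hasse diagram is precisely what forces both $P \setminus \{x\}$ (for maximal $x$) and $P \setminus \{y\}$ (for minimal $y$) to remain connected. Everything else is verifying that singletons at the top and complements of singletons at the bottom are upsets, which is immediate from $P$ having length $1$. So the proof is short and presents no real obstacle.
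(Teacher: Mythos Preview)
Your proof is correct and follows essentially the same approach as the paper: for maximal $x$ you use the upset $\{x\}$ exactly as the paper does, and for minimal $y$ the paper invokes Proposition~\ref{prop:downsetLabel} applied to the downset $\{y\}$, which is precisely your argument with $B = P \setminus \{y\}$ unrolled. One small remark: the length~$1$ hypothesis is not actually needed to see that $\{x\}$ and $P\setminus\{y\}$ are upsets---maximality of $x$ and minimality of $y$ suffice---so your closing comment slightly overstates its role.
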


\begin{proof}
For each maximal element $x \in P$ we have that $A = \{x\}$ is an upset.
Since $P$ is biconnected, $\dim(A) = 1$ so $\phi(x) = r$ because $\phi$ is an $r$-Gorenstein labeling.  Similarly, by Proposition~\ref{prop:downsetLabel}, for any minimal $y\in P$, we must have that $\phi(y)=-r$
\end{proof}

\begin{proposition}
If $P$ is a length $1$ biconnected poset with an $r$-Gorenstein labeling, then $P$ has the same number of minimal and maximal elements.
\label{prop:len1}
\end{proposition}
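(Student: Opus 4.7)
The plan is to combine Lemma~\ref{lem:bimaxmin} with the zero-sum condition built into the definition of an $r$-Gorenstein labeling. Since $P$ is biconnected and has length $1$, every element is either minimal or maximal, so Lemma~\ref{lem:bimaxmin} pins down the value of $\phi$ on all of $P$: each maximal element is labeled $r$ and each minimal element is labeled $-r$. No element is unaccounted for.

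Let $m$ denote the number of maximal elements of $P$ and let $n$ denote the number of minimal elements. Then
\[
\sum_{i \in P}\phi(i) \;=\; m r + n(-r) \;=\; r(m-n).
\]
By Definition~\ref{def:gorLab}, an $r$-Gorenstein labeling satisfies $\sum_{i \in P}\phi(i)=0$. Since $r$ is a positive integer, dividing by $r$ yields $m=n$, which is the desired conclusion.

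There is essentially no obstacle: once Lemma~\ref{lem:bimaxmin} is available, the result is a one-line consequence of the global zero-sum condition. The only thing to be slightly careful about is that Lemma~\ref{lem:bimaxmin} applies to every element (so that the total sum really decomposes as $mr - nr$); this is guaranteed by the length $1$ hypothesis, which ensures $P$ is partitioned into minimal and maximal elements. Biconnectedness is what makes every singleton $\{x\}$ for $x$ maximal (and, dually via Proposition~\ref{prop:downsetLabel}, every singleton $\{y\}$ for $y$ minimal) a dimension-$1$ up/downset in the first place, which is what Lemma~\ref{lem:bimaxmin} needs.
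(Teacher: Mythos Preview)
Your proof is correct and follows essentially the same route as the paper: apply Lemma~\ref{lem:bimaxmin} to determine all values of $\phi$, then use the global zero-sum condition from Definition~\ref{def:gorLab} to force the count of maximal and minimal elements to agree. The paper's argument is identical up to notation (it uses $\alpha,\beta$ where you use $m,n$), and also pauses to note that no element is simultaneously minimal and maximal since $P$ is connected of length~$1$.
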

\begin{proof}
For a length $1$ poset each element is either a minimal or maximal element and cannot be both since the poset is connected.
Thus if $\phi$ is an $r$-Gorenstein labeling its values are completely determined by Lemma~\ref{lem:bimaxmin}.
Let $\alpha$ be the number of maximal elements of $P$ and $\beta$ denote the number of minimal elements.
Then we have that
\[0 = \sum_{z \in P} \phi(z) = \alpha r - \beta r.\]
So $\alpha = \beta$ and the result is proven.
\end{proof}

Let us give an example which shows that the converse of Proposition~\ref{prop:len1} is false.
The length $1$ poset in Figure~\ref{fig:len1} is biconnected with the same number of minimal and maximal elements, but is not $r$-Gorenstein.  To see why, note that all the maximal elements would have to have to be labeled with a value of $r$.
Now let $A$ be the upset generated by $2$, then $cc(A)=1=cc(\overline{A})$ and so it follows that $2$ must be labeled by $-2r$.
Thus by Lemma~\ref{lem:bimaxmin}, the poset cannot be $r$-Gorenstein.

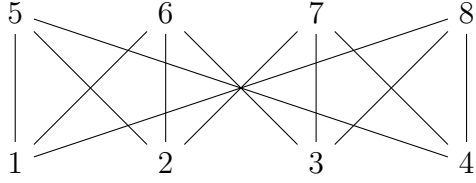
\begin{figure}
    \centering
\begin{center}
\begin{tikzpicture}
\node(1) at (0,0) {$1$};
\node(2) at (2,0) {$2$};
\node(3) at (4,0) {$3$};
\node(4) at (6,0) {$4$};

\node(5) at (0,2) {$5$};
\node(6) at (2,2) {$6$};
\node(7) at (4,2) {$7$};
\node(8) at (6,2) {$8$};

\draw (1)--(5);
\draw (1)--(6);
\draw (1)--(8);

\draw (2)--(5);
\draw (2)--(6);
\draw (2)--(7);

\draw (3)--(6);
\draw (3)--(7);
\draw (3)--(8);

\draw (4)--(7);
\draw (4)--(8);
 \draw(4)--(5);
\end{tikzpicture}
\end{center}
    \caption{A biconnected poset of length $1$ which has the same number of minimal and maximal elements but is not Gorenstein.}
    \label{fig:len1}
\end{figure}

We now give one of the main theorems of this section.

\begin{theorem}\label{thm:GorIffrGor}
 Let $P$ be a length $1$ poset. Then $U_P$ is Gorenstein if and only if $U_P$ is $\mathbb{Q}$-Gorenstein.\end{theorem}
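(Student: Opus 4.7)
The plan is to handle the forward direction trivially (Gorenstein always implies $\mathbb{Q}$-Gorenstein by definition) and to deduce the converse by reducing to the biconnected case, where Lemma~\ref{lem:bimaxmin} pins the labeling down tightly enough that integrality comes for free.

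Suppose $U_P$ is $\mathbb{Q}$-Gorenstein with index $r$, so by Proposition~\ref{prop:GorIffGorLab} the poset $P$ admits an $r$-Gorenstein labeling. I would invoke Theorem~\ref{thm:bicon} to reduce the problem: it suffices to produce a $1$-Gorenstein labeling on each biconnected component of $P$, because these can then be reassembled by the backwards direction of Theorem~\ref{thm:bicon} into a $1$-Gorenstein labeling of $P$, yielding Gorensteinness via Proposition~\ref{prop:GorIffGorLab}. A brief preliminary observation is that each biconnected component of a length $1$ poset is itself length $1$: every edge of the Hasse diagram of $P$ joins a minimal element to a maximal one, and each element of a biconnected component $B$ must be incident to at least one such edge inside $B$, so the elements of $B$ remain partitioned into minima and maxima, now with respect to $B$ as well.

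Now I would fix a biconnected component $B$ and an $r$-Gorenstein labeling $\phi_B$ of it, as furnished by Theorem~\ref{thm:bicon}. By Lemma~\ref{lem:bimaxmin} we have $\phi_B(x) = r$ for every maximal $x \in B$ and $\phi_B(y) = -r$ for every minimal $y \in B$. Hence $\psi_B := (1/r)\phi_B$ is integer-valued. A one-line check then shows that $\psi_B$ is a $1$-Gorenstein labeling of $B$: scaling by $1/r$ sends the sum $\sum_{i \in B} \phi_B(i) = 0$ to $0$, and for every upset $A \subseteq B$ with $\dim(A) = 1$ it sends $\sum_{i \in A} \phi_B(i) = r$ to $1$, which are exactly the defining equations of a $1$-Gorenstein labeling.

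There is really no serious obstacle here once the earlier machinery is in place. The load is carried entirely by Lemma~\ref{lem:bimaxmin}, which leaves an $r$-Gorenstein labeling of a biconnected length $1$ poset no freedom beyond the scalar $r$, so that rescaling is automatically integral. The only conceptual point is to organize the argument so that Theorem~\ref{thm:bicon} is applied first to reduce to biconnected posets, where Lemma~\ref{lem:bimaxmin} becomes available, and then applied a second time to lift the rescaled labelings back up to $P$.
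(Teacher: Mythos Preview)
Your argument is correct and follows essentially the same route as the paper: reduce to biconnected components via Theorem~\ref{thm:bicon}, apply Lemma~\ref{lem:bimaxmin} to see that the $r$-Gorenstein labeling on each component takes only the values $\pm r$, and then divide by $r$. Your version is simply more explicit about why biconnected components of a length~$1$ poset remain length~$1$ and about reassembling the rescaled labelings, but the strategy is identical.
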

\begin{proof}
The backwards direction is trivial given the definitions of $\mathbb{Q}$-Gorenstein and Gorenstein. By Theorem~\ref{thm:bicon} we may assume that $P$ is biconnected. So by Lemma~\ref{lem:bimaxmin}, the $r$-Gorenstein labeling is given by $\phi(x) =\pm r$.  It follows that the labeling $\dfrac{1}{r}\phi(x)$ is a Gorenstein labeling of $P$.
\end{proof}
 
\subsection{Some infinite families}

 By Theorem~\ref{thm:GorIffrGor} we need to only look  at Gorenstein labelings. We will now give some infinite families  of length $1$ posets admitting Gorenstein labelings.

\begin{proposition}\label{prop:2reg}
Let $P$ be a length 1 poset which is  $2$-regular and $2$-connected. Then $U_P$ is Gorenstein. 
\end{proposition}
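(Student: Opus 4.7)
The plan is to use Lemma~\ref{lem:bimaxmin} and exploit the very rigid structure that a $2$-regular, $2$-connected length $1$ poset must have. First I would observe that a $2$-regular connected graph is a single cycle, so the Hasse diagram of $P$ is a cycle. Because $P$ has length $1$, every vertex is either minimal or maximal and these sets form the bipartition of the Hasse diagram, so the cycle has even length, alternating between maxima and minima. In particular, $P$ has the same number $k$ of minimal and maximal elements.

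Motivated by Lemma~\ref{lem:bimaxmin}, I would then define $\phi : P \to \mathbb{Z}$ by $\phi(x) = 1$ if $x$ is maximal and $\phi(y) = -1$ if $y$ is minimal, and verify directly that this is a Gorenstein labeling. The first condition, $\sum_{i \in P} \phi(i) = k - k = 0$, is immediate.

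The remaining step is the main computational point: check that $\sum_{i \in A} \phi(i) = 1$ for every upset $A$ with $\dim(A) = 1$. Since $\dim(A) = cc(A) + cc(\overline{A}) - 1 = 1$, both $A$ and $\overline{A}$ must be connected. In a cycle, connected nonempty induced subgraphs are precisely arcs, so $A$ is an arc. The upset condition forces both endpoints of this arc to be maximal elements: if an endpoint were a minimal element $y$, then one of its two cover neighbors would lie in $\overline{A}$, contradicting the upset property. Hence $A$ alternates $x, y, x, y, \ldots, x$ along the cycle, containing exactly one more maximum than minimum. Therefore $\sum_{i \in A} \phi(i) = (m+1) - m = 1$ where $m$ is the number of minima in $A$ (with the case $|A| = 1$, giving the single maximum, handled the same way).

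The main (mild) obstacle is simply the characterization of dimension $1$ upsets as arcs with maximal endpoints; once that is in hand the verification is a one-line arithmetic check, and by Proposition~\ref{prop:GorIffGorLab} this $1$-Gorenstein labeling shows $U_P$ is Gorenstein.
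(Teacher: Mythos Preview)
Your argument is correct and essentially identical to the paper's: both recognize the Hasse diagram as an even cycle, take the forced labeling $\phi = \pm 1$ from Lemma~\ref{lem:bimaxmin}, and then observe that a dimension~$1$ upset is a path (arc) whose endpoints are maximal, hence has one more maximum than minimum. Your write-up is in fact slightly more explicit about why the endpoints of the arc must be maximal, which the paper leaves implicit.
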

\begin{proof}
For such a poset $P$ it must be that $P$  is a $2k$-cycle.
This is a balanced bipartite graph with $k$ elements in each part of the bipartition.
The bipartition splits $P$ into its minimal elements and its maximal elements.
Let $\phi: P \rightarrow \mathbb{Z}$ be defined by 
$$
\phi(x) = 
\begin{cases}
1 & \mbox{ if  $x$ is maximal} \\
-1 & \mbox{ if  $x$ is minimal} 
\end{cases}
$$
which is the only possibility for a Gorenstein labeling by Lemma~\ref{lem:bimaxmin}.
First, note that since there are $k$ minimal elements and $k$ maximal elements, we have that 
$$
\sum_{x\in P} \phi(x) = k-k=0.
$$
Now suppose that $A$ is an upset such that $A\neq P$ and $\dim(A) =1$.  Then $A$ is connected.  The only proper connected subgraphs of a cycle are paths and so $A$ is a path.  Since $A$ is an upset and a path, it must be that there are $j$ minimal elements in the path  and $j+1$ maximal elements.  It follows that for any upset $A$ which is connected, 
$$
\sum_{x\in A} \phi(x) = (j+1)-j=1.
$$
Thus $U_P$ is Gorenstein.
\end{proof}

As one can see from the poset in Figure~\ref{fig:len1}, we cannot just replace 2-regular with 3-regular in the previous proposition. However, there are $n$-regular length 1 posets which give  rise to Gorenstein varieties as we see next.  Throughout the remainder of this section, we use $K_{n,n}$ to denote the poset whose Hasse diagram is the complete bipartite graph with $n$ minimal elements and $n$ maximal elements.

\begin{proposition}\label{prop:nreg}
If $P$ is $K_{n,n}$, then $U_P$ is Gorenstein. 
\end{proposition}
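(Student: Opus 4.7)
The plan is to exhibit the labeling forced by Lemma~\ref{lem:bimaxmin} and verify it satisfies Definition~\ref{def:gorLab}. The case $n=1$ gives the bounded chain $\hat{0}<\hat{1}$, which is Gorenstein by Theorem~\ref{thm:crepant}, so assume $n\geq 2$. Since $K_{n,n}$ is biconnected for $n\geq 2$, Lemma~\ref{lem:bimaxmin} tells us that the only candidate for a Gorenstein labeling is
\[\phi(x)=\begin{cases}+1 & \text{if $x$ is maximal},\\ -1 & \text{if $x$ is minimal}.\end{cases}\]
The condition $\sum_{x\in P}\phi(x)=0$ is immediate since there are $n$ minimal and $n$ maximal elements.

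The main task is to classify the dimension $1$ upsets of $K_{n,n}$ and check the sum condition on each. Because every minimal element lies below every maximal element in $K_{n,n}$, every upset $A$ falls into one of two types: either $A$ is a nonempty subset of the maximal elements, or $A$ consists of all $n$ maximal elements together with some $k$ minimal elements, $0\leq k\leq n$.

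First I would examine upsets of the first type. If $A$ is a set of $k\geq 1$ maximal elements with complement containing all $n$ minimal elements, then the Hasse diagram of $K_{n,n}$ restricted to $A$ has no edges (maximals form an antichain), so $cc(A)=k$. For $\dim(A)=1$ we need $cc(A)+cc(\overline{A})=2$, which forces $k=1$ (and one checks $\overline{A}$, a copy of $K_{n,n-1}$ together with one additional minimal that is still connected to all surviving maximals, is connected). For such $A=\{x\}$ with $x$ maximal, $\sum_{x\in A}\phi(x)=1$, as required.

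Next I would handle the second type. If $A$ contains all maximals and $k\geq 1$ minimals, then $A$ is connected since any included minimal is adjacent to every maximal, giving $cc(A)=1$; the complement $\overline{A}$ consists of $n-k$ minimals forming an antichain, so $cc(\overline{A})=n-k$ (when $k<n$). Thus $\dim(A)=1$ forces $k=n-1$, i.e.\ $A=P\setminus\{y\}$ for some minimal $y$. For such an $A$, $\sum_{x\in A}\phi(x)=n\cdot 1+(n-1)\cdot(-1)=1$, as required. The remaining case $k=n$ gives $A=P$, corresponding to $e_P=0$, which is excluded. Since every dimension $1$ upset has been accounted for and $\phi$ assigns the correct sum in each case, $\phi$ is a Gorenstein labeling, so $U_P$ is Gorenstein by Proposition~\ref{prop:GorIffGorLab}.

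The only potential obstacle is the bookkeeping of connected components for $\dim(A)$, but the very simple structure of $K_{n,n}$ (every minimal is comparable to every maximal) makes this analysis short and forces the dimension $1$ upsets to be exactly the singletons of maximals and the co-singletons of minimals.
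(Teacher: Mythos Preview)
Your proof is correct and follows essentially the same approach as the paper: define $\phi$ to be $+1$ on maximals and $-1$ on minimals, then verify that the only dimension~$1$ upsets are the singleton maximals and the complements of singleton minimals. The paper states this classification in one line, while you carry out the connected-component bookkeeping explicitly; the separate treatment of $n=1$ is harmless but unnecessary, since $K_{1,1}$ is already biconnected and the same argument applies.
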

\begin{proof}
First note that $P$ has $n$ minimal elements each of which is less than each of the $n$ maximal elements.
 Let the minimal elements of $P$ be $a_1$, $a_2$, $\dots$, $a_n$.
Also, let the maximal elements of $P$ be $b_1$, $b_2$, $\dots$, $b_n$.
Let $\phi: P \rightarrow \mathbb{Z}$ be given by $\phi(a_i) = -1$ and $\phi(b_i) = 1$ for all $1 \leq i \leq n$
which is the only possibility for a Gorenstein labeling by Lemma~\ref{lem:bimaxmin}.
It is clear that $\sum_{x \in P} \phi(x) = 0$.
Now if $A \subseteq P$ is an upset with $\dim(A) = 1$, then either $A = \{b_i\}$ for some $1 \leq i \leq n$ or else $\overline{A} = \{a_i\}$ again for some $1 \leq i \leq n$.
In any case $\sum_{x \in A} \phi(x) = 1$ for any upset $A \subseteq P$ with $\dim(A) = 1$.
Therefore $U_P$ is Gorenstein.
\end{proof}

By Proposition~\ref{prop:nreg}, we know that there are $n$-regular, $n$-connected posets which have Gorenstein labelings, namely $K_{n,n}$.  Additionally, by Proposition~\ref{prop:2reg}, we know that there are $2$-regular, 2-connected posets which give rise to Gorenstein varieties other than $K_{2,2}$.    This naturally brings up the question of if there are $n$-regular, $n$-connected posets, other than $K_{n,n}$, whose  variety is Gorenstein when   $n\geq 3$.  We have been able verify that $K_{3,3}$ is the only $3$-regular, 3-connected length 1 poset whose variety  is Gorenstein.  However, at this time our methods do not need seem to generalize  when $n\geq 4$ and thus this case remains open.

\section{Conclusion}
\label{sec:conclusion}

We now conclude with some discussion of open problems.
First we have the following question on recognizing when an $r$-Gorenstein labeling exists.

\begin{question}
\label{q:G}
Is there a good characterization in terms of $P$ of when $U_P$ is Gorenstein or $\mathbb{Q}$-Gorenstein?
\end{question}

In Question~\ref{q:G} the word ``good'' means a characterization which is not simply the definition of being ($\mathbb{Q}$)-Gorenstein and preferably gives an efficient means of recognizing the Gorenstein property from the poset like is done in Algorithm~\ref{alg:RankedPosAlg} for posets with $\hat{0}$ or $\hat{1}$ when the number of connected components of $M_P$ is small.
A braid cone is smooth if and only if the Hasse diagram of the poset is a tree~\cite[Corollary 3.10]{PRW}.
Since smooth implies Gorenstein, tree posets will be included in any characterization that answers Question~\ref{q:G}.
Another easy to describe class of posets we know have a Gorenstein labeling is bounded posets from Theorem~\ref{thm:crepant}.
The largest class   we understand, which contains tree posets and bounded posets, comes from the characterization in Theorem~\ref{thm:characterizationOfRankedPosetGor} combined with the reduction to biconnected components from Theorem~\ref{thm:bicon}.
A question, possibly more approachable than in the general case, is the following for which we treated some cases of in Section~\ref{sec:len1}.

\begin{question}
\label{q:len1}
Is there an efficient way to decide if $U_P$ is Gorenstein  when $P$ is a length $1$ poset?
\end{question}

By Proposition~\ref{prop:len1} and reduction to the biconnected case previously mentioned, we see that Question~\ref{q:len1} becomes a problem about neighborhoods in balanced bipartite graphs with a ``top'' and ``bottom'' set of vertices corresponding to maximal and minimal elements respectively. 
Approaches to this question could be to look for a clever algorithm or look for a reduction with an NP-complete problem.

Whether it is the length 1 case or the general case, in order to decide if $U_P$ is Gorenstein, we need understand the ray generators of $\sigma_P$. As we mentioned before, doing this may be difficult since the ray generators come from upsets which are known to be hard to enumerate~\cite{ProvanBall1983}. This makes the problem of deciding Gorensteinness potentially difficult for generic posets. 

In Section~\ref{sec:posetsWith0or1} and Section~\ref{sec:len1} we showed that $\mathbb{Q}$-Gorenstein implies Gorenstein for posets with a $\hat{0}$ or a $\hat{1}$ and for length 1 posets. Moreover, using SageMath~\cite{SageMath}, we have been able to verify that for all posets on up to $10$ elements, $\mathbb{Q}$-Gorenstein implies Gorenstein. This leads us to the following  conjecture. 

\begin{conjecture}
\label{conj:QimpliesG}
Let $P$ be a poset, then $U_P$ is Gorenstein if and only if $U_P$ is $\mathbb{Q}$-Gorenstein.
\end{conjecture}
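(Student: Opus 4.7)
The forward direction is immediate. For the reverse, by Proposition~\ref{prop:GorIffGorLab}, $U_P$ is $\mathbb{Q}$-Gorenstein with index $r$ precisely when $P$ admits an $r$-Gorenstein labeling $\phi$ and no $s$-labeling with $0 < s < r$. Since $\sigma_P$ is full-dimensional, $\phi$ is uniquely determined by $r$, so it suffices to show $r \mid \phi(x)$ for every $x \in P$. By Theorem~\ref{thm:bicon}, I would reduce immediately to the case of biconnected $P$: if each biconnected component $B$ satisfies Gorenstein $=$ $\mathbb{Q}$-Gorenstein, then so does $P$ (any $r$-labeling of $P$ restricts to an $r$-labeling on each $B$ by the glue-lemma argument underlying Lemma~\ref{lem:bicomponent}; if each restriction is in fact a $1$-labeling after rescaling, we can glue back via Lemma~\ref{lem:gluing}).

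Next I would dispose of the extremal elements. Assume $P$ is biconnected with $|P| \geq 2$. For any minimal $m$, the singleton $\{m\}$ is a downset; its complement is connected since removing $m$ from a biconnected Hasse diagram leaves a connected graph, so $\dim(\{m\}) = 1$. Proposition~\ref{prop:downsetLabel} then gives $\phi(m) = -r$. Dually, $\phi(M) = r$ for every maximal $M$. This handles the first and last ``layers'' of the poset and already recovers the length-$1$ case (Theorem~\ref{thm:GorIffrGor}) as well as motivates why posets with $\hat{0}$ or $\hat{1}$ (Theorem~\ref{thm:posetW0Or1GorIffQGor}) are easier: the principal-downset recursion of Lemma~\ref{lem:rGorUniqueLabel} propagates divisibility down the entire poset.

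For non-extremal elements in a biconnected poset without $\hat{0}$ or $\hat{1}$, I see two plausible routes. Route (A) is a lattice-index computation: let $L \subseteq \mathbb{Z}^n$ be the $\mathbb{Z}$-span of $\{e_A : A \text{ a dimension-$1$ upset}\} \cup \{e_{[n]}\}$. Then $r \mid \phi(x)$ for all $x$ is equivalent to saying that $L$ equals $\mathbb{Z}^n$, i.e.\ the ray generators of $\sigma_P$ together with $e_{[n]}$ generate the ambient lattice rather than a proper sublattice. I would attempt to prove this by exhibiting $e_x$ explicitly as a $\mathbb{Z}$-combination of $e_A$'s for each $x$, using the structural dim-$1$ upsets built from $\upset{x}$, $\downset{x}$, and their complements together with the known singleton dim-$1$ upsets/downsets at extremal elements. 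Route (B) is an inductive peeling: remove a minimal element $m$ (for which $\phi(m) = -r$ is already known), and relate the (possibly no longer biconnected) poset $P \setminus \{m\}$ to $P$ via the gluing/cut-vertex framework of Section~\ref{sec:Gor}, applying Theorem~\ref{thm:bicon} inductively.

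The hard part, I expect, will be controlling the interior labels in biconnected posets that have neither $\hat{0}$ nor $\hat{1}$ and whose Hasse diagram is $3$-connected, since exactly here the easy reductions all fail simultaneously: one cannot split along a cut vertex, one cannot anchor the recursion of Lemma~\ref{lem:rGorUniqueLabel} at a global extremum, and removing an extremal element typically breaks biconnectedness. Route (A) then reduces to a combinatorial statement about the Smith normal form of the incidence matrix of dim-$1$ upsets of a $3$-connected poset, and the conjecture predicts that this Smith normal form is always trivial. Route (B) instead would require an ear-decomposition style argument for the biconnected Hasse diagram that respects the up/down structure. I would try Route (A) first on small $3$-connected length-$2$ examples (where $P$ has minimal, ``middle,'' and maximal layers) to look for an explicit formula expressing $e_x$ for a middle element $x$ as an integer combination of dim-$1$ upset indicators; the verified truth of the conjecture up to $|P| = 10$ strongly suggests such a formula exists, and identifying it is the central obstacle.
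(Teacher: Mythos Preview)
The statement you are attempting to prove is listed in the paper as Conjecture~\ref{conj:QimpliesG}, not as a theorem; the paper does \emph{not} contain a proof of it. The authors establish only the special cases $P$ has a $\hat{0}$ or $\hat{1}$ (Theorem~\ref{thm:posetW0Or1GorIffQGor}) and $\ell(P)=1$ (Theorem~\ref{thm:GorIffrGor}), and report computer verification for $|P|\le 10$. So there is no ``paper's own proof'' to compare against.

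Your write-up is not a proof either, and you seem aware of this: both Route~(A) and Route~(B) are proposed strategies whose decisive step is left open. The reductions you carry out are correct and match exactly what the paper does in the known cases: the biconnected reduction via Theorem~\ref{thm:bicon} is valid, and for a biconnected $P$ the singleton argument indeed forces $\phi(m)=-r$ and $\phi(M)=r$ on extremal elements, which is precisely Lemma~\ref{lem:bimaxmin} and immediately gives the length-$1$ case. Your Route~(A) reformulation --- that the conjecture is equivalent to the $\mathbb{Z}$-span of $\{e_A : \dim(A)=1\}\cup\{e_{[n]}\}$ being all of $\mathbb{Z}^n$ --- is a correct and clean restatement, but it is just a restatement; you have not produced the claimed integer combinations for interior elements, and nothing in the paper supplies them either. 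Route~(B) has the problem you yourself flag: deleting a minimal element from a biconnected Hasse diagram need not leave a biconnected graph, and there is no induction hypothesis available for the resulting pieces unless you already know the conjecture for smaller posets of arbitrary shape.

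In short: your outline correctly isolates the genuine obstacle (interior labels in $3$-connected posets with neither $\hat{0}$ nor $\hat{1}$), but neither you nor the paper resolves it. What you have written is a reasonable research plan, not a proof.
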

 
We note that braid cones  have a nice property related to smoothness of a similar flavor to this conjecture.
In particular, a braid cone is simplicial (i.e.~ray generators are a basis of $N_{\mathbb{R}}$) if and only if it is smooth (i.e~ray generators are a basis of $N$)~\cite[Corollary 3.10]{PRW}.
Geometrically this means that the corresponding toric variety is an orbifold if and only if it is smooth.

\bibliographystyle{alphaurl}
\bibliography{refs}

\end{document}